\documentclass[11pt,reqno,a4paper]{amsart}
\usepackage{amsfonts}
\usepackage{ifthen}
\usepackage{amsthm,cite}
\usepackage{amsmath,mathrsfs}
\usepackage{graphicx}
\usepackage{subcaption}
\usepackage{amscd,amssymb,amsthm}
\usepackage{color}
\usepackage{hyperref}
\usepackage{amsthm}
\usepackage{amsmath}

\allowdisplaybreaks

\usepackage{array,multirow,makecell}

\setcellgapes{1pt}
\makegapedcells
\newcolumntype{R}[1]{>{\raggedleft\arraybackslash }b{#1}}
\newcolumntype{L}[1]{>{\raggedright\arraybackslash }b{#1}}
\newcolumntype{C}[1]{>{\centering\arraybackslash }b{#1}}
\setlength{\paperwidth}{210mm} \setlength{\paperheight}{297mm}
\setlength{\oddsidemargin}{0mm} \setlength{\evensidemargin}{0mm}
\setlength{\topmargin}{-20mm} \setlength{\headheight}{10mm}
\setlength{\headsep}{3mm} \setlength{\textwidth}{160mm}
\setlength{\textheight}{240mm} \setlength{\footskip}{15mm}
\setlength{\marginparwidth}{0mm} \setlength{\marginparsep}{0mm}
\newcounter{minutes}\setcounter{minutes}{\time}
\divide\time by 60
\newcounter{hours}\setcounter{hours}{\time}
\multiply\time by 60 \addtocounter{minutes}{-\time}

\newtheorem{theo}{Theorem}[section]

\newtheorem{theorem}{Theorem}[section]

\newtheorem{definition}[theorem]{Definition}
\newtheorem{corollary}[theorem]{Corollary}
\newtheorem{remark}[theorem]{Remark}
\newtheorem{example}[theorem]{Example}

\usepackage{tikz,xcolor,hyperref}
\definecolor{lime}{HTML}{A6CE39}
\DeclareRobustCommand{\orcidicon}{%
	\begin{tikzpicture}
	\draw[lime, fill=lime] (0,0)
	circle [radius=0.16]
	node[white] {{\fontfamily{qag}\selectfont \tiny ID}};
	\draw[white, fill=white] (-0.0625,0.095)
	circle [radius=0.007];
	\end{tikzpicture}
	\hspace{-2mm}
}

\foreach \x in {A, ..., Z}{%
	\expandafter\xdef\csname orcid\x\endcsname{\noexpand\href{https://orcid.org/\csname orcidauthor\x\endcsname}{\noexpand\orcidicon}}
}

\setlength\arraycolsep{1pt}
\title{ Bicomplex Generalized  Hypergeometric Functions and their Applications}
\author[Snehasis Bera]{Snehasis Bera}
\address{Snehasis Bera\newline Department of Mathematics,\newline National Institute of Technology Jamshedpur, Jamshedpur-831014, Jharkhand, India.}
\email{berasnehasis1996@gmail.com}
\author[Sourav Das]{Sourav Das$^\ast$\orcidA{}}
\thanks{$^\ast$Corresponding author}
\address{Sourav Das\newline Department of Mathematics,\newline National Institute of Technology Jamshedpur, Jamshedpur-831014, Jharkhand, India.}
\email{souravdasmath@gmail.com, souravdas.math@nitjsr.ac.in}
\author[Abhijit Banerjee]{Abhijit Banerjee}
\address{Abhijit Banerjee\newline Department of Mathematics,\newline Garhbeta College, Paschim Medinipur-721127, West Bengal, India.}
\email{abhijit.banerjee.81@gmail.com}
\keywords{Bicomplex function, Bicomplex Gamma function, Generalized hypergeometric function, Coherent states}
\subjclass{30G35; 33C20; 81R30}
\date{}
\begin{document}

\maketitle

\begin{abstract}
  In this work, generalized hypergeometric functions for bicomplex argument is introduced and its convergence criteria is derived. Furthermore, integral representation of this function has been established. Moreover, quadratic transformation, differential relation, analyticity and contiguous relations of this function are derived. Additionally, applications in quantum information system and quantum optics are provided as a consequence.
\end{abstract}
\section{Introduction}
In 1892, C. Segre \cite{le} introduced the bicomplex number. G. B. Price \cite{multicomplex} discussed bicomplex numbers based on multi-complex spaces and functions. This bicomplex numbers are useful in fluid mechanics and mathematical physics.
The set of bicomplex numbers is defined as \cite{bc_number}: $$\mathbb{BC}=\{Z=z+i_2z'|z, z'\in \mathbb{C}(i)\},$$
where $\mathbb{C}(i)$ is the set of complex numbers with the imaginary unit $i_1$ and $i_1i_2=i_2i_1=k,i_1^2=i_2^2=-1,k^2=1$. If we set $z'=0$ then we get our ordinary complex numbers. Under the addition and multiplication of two bicomplex numbers $Z= z +i_2z'$ and $Z_1=z_1+i_2z'_1$ defining by
\begin{eqnarray*}
Z+Z_1&=&(z +z_1)+i_2(z' +z'_1)\\
Z\cdot Z_1&=&(z z_1 -z' z'_1)+i_2(z'z_1 +z z'_1)
\end{eqnarray*}
 $\mathbb{BC}$ forms a commutative ring with unity. The set of all zero divisors in $\mathbb{BC}$ is called null cone  defined by \cite{bc_number} : \begin{align*}
    \mathbb{O}_2=\{Z=z+i_2z'|{z}^2+{z'}^2=0\}. \end{align*}
 In particular the numbers
\begin{align}\label{eq:idm}
e_1=\frac{1+k}{2} \quad  \mbox{and} \quad e_2=\frac{1-k}{2}
\end{align}
are zero divisors, which are linearly independent in the  space $\mathbb{BC}$, and satisfies the identities :
$ e_1+e_2=1, \; e_1.e_2=0, \; e_1-e_2=k, \; e_1^{2}=e_1, \; e_2^{2}=e_2.$
For any bicomplex number $Z=z+i_2z' \in \mathbb{BC} $ can be represented by  $$ Z= z_1  e_1+z_2 e_2,$$ where  $z_1=z-i_1z'$ and $z_2=z+i_1z'$. This representation is called idempotent representation of the bicomplex number $Z$ and the set $\{e_1,e_2\}$ is known as idempotent basis of the bicomplex module $\mathbb{BC}$.

The set of all hyperbolic numbers is defined as follows \cite{bc_holomorphic} :
  $$\mathbb{D}=\{Z=x+ky|x,y\in{\mathbb{R}}\}.$$ The following two subsets of $\mathbb{D}$ $$\mathbb{D}^+=\{x+ky|x^2-y^2\geq0,x\geq0\} \quad \mbox{and} \quad \mathbb{D}^-=\{x+ky|x^2-y^2\geq0,x\leq0\}$$ are called non-negative hyperbolic numbers and non-positive hyperbolic numbers respectively.
  The projection mappings $P_1:\mathbb{BC}\rightarrow A\subseteq\mathbb{C}$ and $P_2:\mathbb{BC}\rightarrow B\subseteq\mathbb{C}$ are defined by
\begin{align*}
P_1(Z)=P_1(z_1e_1+z_2e_2)=z_1=z-i_1z'\in A\\
P_2(Z)=P_2(z_1 e_1+z_2 e_2)=z_2=z+i_1z'\in B,
\end{align*}
where $A=\{z_1=z-i_1z'|z,z'\in\mathbb{C}\}$ and $B=\{z_2=z+i_1z'|z,z'\in\mathbb{C}\}$ are two $\mathbb{C}(i_1)$ linear spaces. Then every bicomplex number $Z$ can be uniquely represented as $$Z=P_1(Z)e_1+P_2(Z)e_2.$$

Every bicomplex numbers $Z$ has three types of conjugations on $\mathbb{BC}$ \cite{bc_number}:
 \begin{align*}
    \overline{Z}=\overline{z}+i_2\overline{z}',\quad \tilde{Z}=z-i_2z',\quad
    Z^*=\overline{z}-i_2\overline{z}'.
\end{align*}
   If we consider standard inner products $\langle\cdot,\cdot\rangle_1 $ and $\langle\cdot,\cdot\rangle_2$ on two $\mathbb{C}(i_1)$ linear spaces $A$ and $B$ respectively. Then the standard inner product of two bicomplex numbers $Z$ and $W$ is given by $$ \langle Z,W\rangle = \langle z_1,w_1 \rangle_1 e_1+ \langle z_2,w_2 \rangle_2 e_2,$$ where $e_1$ and $e_2$ are defined in \eqref{eq:idm}.
 \begin{definition}\rm{\cite{bc_root}} Let $Z=z_1+i_2z_2\in {\mathbb{BC}}$. The Euclidean norm is defined as $$||Z||_2=\sqrt{|z|^2+|z'|^2}$$ and the hyperbolic norm is given by $$|Z|_h=|z_1|e_1+|z_2|e_2,$$ where $\{e_1,e_2\}$ is idempotent basis of the bicomplex module
$ \mathbb{BC}$. It can be observed that the hyperbolic norm of any bicomplex number is a hyperbolic number.\end{definition}
Using this hyperbolic norm open balls of bicomplex numbers can be defined as $$B_h(c,R)=\{Z:|Z-c|_h<_hR\},$$ where the center $c\in{\mathbb{BC}}$ and the hyperbolic radius $R\in{\mathbb{D}^+}$.\\
Let $f:\mathbb{U}\subset \mathbb{BC} \rightarrow \mathbb{BC}$ defined as $f(Z)=f_1(z_1) e_1+ f_2(z_2) e_2$, where $e_1$ and $e_2$ are defined in \eqref{eq:idm}. Then $f(Z)$ is said to be differentiable at $Z_0$ if $$\lim\limits_{\substack{{Z\to Z_0}\\{Z-Z_0\notin \mathbb{O}_2}}}\frac{f(Z)-f(Z_0)}{Z-Z_0}$$ exists finitely. Moreover, $f$ is said to be $\mathbb{BC}-holomorphic$ in $U$ if $f$ is differentiable at every points in $U$. \\
In \cite{bc_number}, analyticity condition is provided as follows:
\begin{theo}\rm{\cite{bc_number}}
     A function $F:\Omega\subset \mathbb{BC} \rightarrow \mathbb{BC}$, where  $F(Z)=f_1(z,z')+i_2f_2(z,z')$ is $\mathbb{BC}-holomorphic$ if and only if, $f_1$ and $f_2$ are holomorphic in $\Omega$ and satisfies following bicomplex Cauchy-Riemann equations :
      $$ \frac{\partial f_1}{\partial z}= \frac{\partial f_2}{\partial z'} \quad
     \mbox{and} \quad  \frac{\partial f_1}{\partial z'}=- \frac{\partial f_2}{\partial z}.$$
\end{theo}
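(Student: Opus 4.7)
The plan is to follow the classical Cauchy–Riemann template, exploiting the fact that both $1$ and $i_2$ lie outside the null cone $\mathbb{O}_2$, so that the bicomplex difference quotient can legitimately be evaluated along these two directions.

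For the necessity direction, I would restrict $\lim_{\Delta Z\to 0,\,\Delta Z\notin\mathbb{O}_2}(F(Z_0+\Delta Z)-F(Z_0))/\Delta Z$ first to $\Delta Z=h\in\mathbb{C}(i_1)$, obtaining $F'(Z_0)=\partial f_1/\partial z+i_2\,\partial f_2/\partial z$, and then to $\Delta Z=i_2h$, which after multiplication by $i_2^{-1}=-i_2$ yields $F'(Z_0)=\partial f_2/\partial z'-i_2\,\partial f_1/\partial z'$. Equating the $1$- and $i_2$-components of these two expressions for $F'(Z_0)$ delivers both bicomplex Cauchy–Riemann equations simultaneously, and the very existence of the partial derivatives extracted along these paths certifies that $f_1$ and $f_2$ are holomorphic on $\Omega$ (as functions of the two complex variables $z,z'$, invoking Hartogs' separate-to-joint result if desired).

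The sufficiency direction carries the bulk of the work. Assuming $f_1,f_2$ are holomorphic and the BC-CR equations hold, I would write the joint differential expansion $f_j(z+\Delta z,z'+\Delta z')-f_j(z,z')=(\partial f_j/\partial z)\,\Delta z+(\partial f_j/\partial z')\,\Delta z'+R_j$ with $R_j=o(|\Delta z|+|\Delta z'|)$, assemble the increment of $F$, and substitute $\partial_zf_1=\partial_{z'}f_2$ together with $\partial_{z'}f_1=-\partial_zf_2$. Using $i_2^2=-1$, the principal part collapses into $(\partial_zf_1+i_2\partial_zf_2)(\Delta z+i_2\Delta z')=F'(Z)\cdot\Delta Z$, leaving
\[
\frac{F(Z+\Delta Z)-F(Z)}{\Delta Z}-\Bigl(\tfrac{\partial f_1}{\partial z}+i_2\tfrac{\partial f_2}{\partial z}\Bigr)=\frac{R_1+i_2R_2}{\Delta Z}.
\]

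The principal obstacle is to show that this last quotient tends to $0$ as $\Delta Z\to 0$ outside $\mathbb{O}_2$, because $\Delta Z^{-1}$ blows up when $\Delta Z$ approaches the null cone and a naive Euclidean-norm estimate is not strong enough. My preferred remedy is to pass to the idempotent frame: writing $\Delta Z=\Delta z_1\,e_1+\Delta z_2\,e_2$ and $R_1+i_2R_2=r_1e_1+r_2e_2$, the quotient decomposes componentwise as $(r_1/\Delta z_1)\,e_1+(r_2/\Delta z_2)\,e_2$. The invertible $\mathbb{C}(i_1)$-linear change $z_1=z-i_1z',\,z_2=z+i_1z'$ transports the $o$-estimates from $R_1,R_2$ onto each $r_k$, showing both summands are $o(1)$ provided $\Delta z_1,\Delta z_2\neq 0$—which is precisely the assumption $\Delta Z\notin\mathbb{O}_2$. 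This establishes $\mathbb{BC}$-differentiability and closes the equivalence.
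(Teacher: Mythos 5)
The paper gives no proof of this statement --- it is imported verbatim from \cite{bc_number} as a preliminary --- so your argument has to stand entirely on its own. The necessity half is correct: $1$ and $i_2$ lie outside $\mathbb{O}_2$, so restricting the difference quotient to $\Delta Z=h$ and $\Delta Z=i_2h$ with $h\in\mathbb{C}(i_1)\setminus\{0\}$ is legitimate, equating the two resulting expressions for $F'(Z_0)$ yields both Cauchy--Riemann equations, and Hartogs upgrades separate to joint holomorphy.

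The sufficiency half, however, has a genuine gap at its final estimate. You correctly reduce the problem to showing $(R_1+i_2R_2)/\Delta Z\to 0$ and correctly pass to the idempotent frame, but the bound the linear change of variables actually delivers is $r_k=o(|\Delta z_1|+|\Delta z_2|)$ (comparability of $|\Delta z|+|\Delta z'|$ with $|\Delta z_1|+|\Delta z_2|$, nothing more), and this does \emph{not} imply $r_k/\Delta z_k\to 0$: taking $\Delta z_1=t^2$, $\Delta z_2=t$ with $t\to 0^{+}$ keeps $\Delta Z$ outside $\mathbb{O}_2$ at every stage, yet $(|\Delta z_1|+|\Delta z_2|)/|\Delta z_1|\sim 1/t$ blows up, so $r_1/\Delta z_1$ need not vanish. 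Non-vanishing of $\Delta z_1,\Delta z_2$ is not the relevant condition; what you need is $r_k=o(|\Delta z_k|)$ for each $k$ \emph{separately}. The missing step is to use the Cauchy--Riemann equations on a whole neighbourhood rather than only at the base point: writing $G_1=f_1-i_1f_2$ and $G_2=f_1+i_1f_2$ for the idempotent components of $F$, the pair $\partial_z f_1=\partial_{z'}f_2$, $\partial_{z'}f_1=-\partial_z f_2$ is equivalent to the pair $\partial G_1/\partial z_2\equiv 0$, $\partial G_2/\partial z_1\equiv 0$, so locally each $G_k$ is a holomorphic function of $z_k$ alone. Then $r_k$ is the one-variable Taylor remainder of $G_k$, hence genuinely $o(|\Delta z_k|)$, each idempotent summand of the difference quotient converges on its own as $\Delta z_k\to 0$, and the proof closes. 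Without this decoupling observation the concluding sentence of your argument is unsupported.
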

\begin{definition}\rm{\cite{gamma}}
     The bicomplex gamma function is given by
    \begin{align*}
        \Gamma(Z)=\frac{e^{-\gamma Z}}{Z}\prod_{n=1}^{\infty}\left(\left(1+\frac{Z}{n}\right)^{-1}exp\left(\frac{Z}{n}\right)\right),\quad Z\in{\mathbb{BC}}
    \end{align*}
    where $Z=z_1+i_2z_2$ with $z_1\not=-\frac{(r+s)}{2}$ and $z_2\not=i_1\frac{(s-r)}{2}$ and  $r,s\in \mathbb{N}\cup \{0\}$. Moreover, $\Gamma(Z)$ can be expressed as follows :
    \begin{align}\label{eq:g}
        \Gamma(Z)=\Gamma(\alpha) e_1+ \Gamma(\beta) e_2,
    \end{align}
    where $e_1$ and $e_2$ are given by \eqref{eq:idm}.
\end{definition}

    A generalized hypergeometric function is defined \cite{sp_function} as:
    \begin{align}\label{eq:5}
         {}_{p} F_{q}\left[\begin{matrix}&a_1, &a_2,.... &a_{p};\\&b_1,&b_2,.....&b_
         {q};&\end{matrix}z\right]=
  \sum_{n=0}^{\infty}\frac{\prod\limits_{i=1}^{p}(a_i)_n}{\prod\limits_{j=1}^{q}(b_j)_n}\frac{z^n}{n!},
    \end{align}
    where $b_j$ is neither zero nor a negative integer.
\begin{theo}\rm{\cite{sp_function}}
    If $p\leqq q+1$, $\Re(b_1)>\Re(a_1)>0$ and $|z|<1$, then integral representation of the generalized hypergeometric function is given by
\begin{align}\label{eq:in}
    &{}_{p} F_{q}\left[\begin{matrix}&a_1, &a_2,.... &a_{p};\\&b_1,&b_2,.....&b_
         {q};&\end{matrix}z\right]\nonumber\\
         &= \frac{\Gamma(b_1)}{\Gamma(a_1)\Gamma(b_1-a_1)} \left[ \int_{0}^{1} t^{a_1-1}(1-t)^{b_1-a_1-1}{}_{p-1} F_{q-1}\left[\begin{matrix}&a_2,.... &a_{p};\\&b_2,.....&b_{q};&\end{matrix}(zt)\right] dt \right].
\end{align}
\end{theo}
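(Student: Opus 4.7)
The plan is to start from the series definition \eqref{eq:5} and rewrite the ratio of Pochhammer symbols $(a_1)_n/(b_1)_n$ as a Beta-integral, using the hypothesis $\Re(b_1)>\Re(a_1)>0$. Recall the Euler integral
\[
B(a_1+n,b_1-a_1)=\int_{0}^{1} t^{a_1+n-1}(1-t)^{b_1-a_1-1}\,dt=\frac{\Gamma(a_1+n)\,\Gamma(b_1-a_1)}{\Gamma(b_1+n)},
\]
which is valid under the stated real-part conditions. Rearranging,
\[
\frac{(a_1)_n}{(b_1)_n}=\frac{\Gamma(a_1+n)\,\Gamma(b_1)}{\Gamma(a_1)\,\Gamma(b_1+n)}=\frac{\Gamma(b_1)}{\Gamma(a_1)\,\Gamma(b_1-a_1)}\int_{0}^{1} t^{a_1+n-1}(1-t)^{b_1-a_1-1}\,dt.
\]

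Next, I substitute this integral for each factor $(a_1)_n/(b_1)_n$ in the series \eqref{eq:5}, pull the $n$-independent constant $\Gamma(b_1)/[\Gamma(a_1)\Gamma(b_1-a_1)]$ and the integral outside, and regroup the $t^n$ with $z^n/n!$. After the interchange this produces
\[
\frac{\Gamma(b_1)}{\Gamma(a_1)\Gamma(b_1-a_1)}\int_{0}^{1} t^{a_1-1}(1-t)^{b_1-a_1-1}\sum_{n=0}^{\infty}\frac{\prod_{i=2}^{p}(a_i)_n}{\prod_{j=2}^{q}(b_j)_n}\frac{(zt)^n}{n!}\,dt,
\]
and the inner sum is precisely ${}_{p-1}F_{q-1}$ evaluated at $zt$, which gives the claimed identity.

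The only delicate step is the interchange of summation and integration. I would justify it by absolute convergence: the condition $p\le q+1$ together with $|z|<1$ (and $|zt|\le|z|<1$ on $[0,1]$) ensures the series ${}_{p-1}F_{q-1}(zt)$ converges uniformly in $t\in[0,1]$, while the weight $t^{a_1-1}(1-t)^{b_1-a_1-1}$ is absolutely integrable under $\Re(b_1)>\Re(a_1)>0$. By Fubini's theorem (or monotone/dominated convergence applied to the partial sums) the exchange is legitimate, and the argument closes.

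I expect the main conceptual obstacle to be exactly this convergence bookkeeping, since the case $p=q+1$ is borderline and one must invoke $|z|<1$ carefully to bound the tail of the hypergeometric series uniformly in $t$; once that is in place, the identity reduces to a clean Beta-function manipulation.
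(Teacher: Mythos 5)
This theorem is not proved in the paper at all: it is quoted as a known result and attributed to \cite{sp_function}, so there is no internal proof to compare against. Your argument is the standard (and correct) derivation of Euler's integral representation: the Beta-integral identity for $(a_1)_n/(b_1)_n$ is valid precisely under $\Re(b_1)>\Re(a_1)>0$, the term-by-term substitution and regrouping are routine, and your justification of the sum--integral interchange is sound --- for $p\leq q+1$ the inner series ${}_{p-1}F_{q-1}(zt)$ has $p-1\leq q=(q-1)+1$ parameters, so it converges for $|zt|\leq|z|<1$ with a $t$-independent majorant $\sum_n |c_n||z|^n<\infty$, and Tonelli's theorem applies against the integrable weight $t^{\Re(a_1)-1}(1-t)^{\Re(b_1-a_1)-1}$. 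This is essentially the textbook proof the citation points to, so the proposal is correct and matches the intended argument.
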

\begin{theo}\rm{\cite{laplace transform}}\label{th:c1}
    If $p\leq q, \Re(v)>0$ and $|z|<1$, then another integral representation of the generalized hypergeometric function can be expressed as follows :
    \begin{align*}
         {}_{p+1} F_{q}\left[\begin{matrix}&a_0, &a_1,.... &a_{p};\\&b_1,&b_2,.....&b_
         {q};&\end{matrix}z\right]
         = \frac{1}{\Gamma(v)} \left[ \int_{0}^{\infty}e^{-t}t^{a_0-1}{}_{p} F_{q}\left[\begin{matrix}&a_1,.... &a_{p};\\&b_1,.....&b_{q};&\end{matrix}(zt)\right] dt \right].
    \end{align*}
\end{theo}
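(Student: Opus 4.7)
The plan is to start from the right-hand side, expand the inner ${}_pF_q$ as a power series via \eqref{eq:5}, interchange summation and integration, evaluate each resulting Euler--Gamma integral, and recognise the outcome as the ${}_{p+1}F_q$ series. I read the hypothesis ``$\Re(v)>0$'' as a typographical proxy for $\Re(a_0)>0$, since $v$ is otherwise undefined in the statement and $\Gamma(v)$ must be $\Gamma(a_0)$ for the dimensions of the formula to balance.

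Concretely, substituting \eqref{eq:5} for the inner hypergeometric factor gives
$$e^{-t}\,t^{a_0-1}\,{}_pF_q(zt)=\sum_{n=0}^{\infty}\frac{\prod_{i=1}^{p}(a_i)_n}{\prod_{j=1}^{q}(b_j)_n}\,\frac{z^n}{n!}\;e^{-t}\,t^{a_0+n-1}.$$
Assuming the sum and integral can be swapped, the Euler representation
$$\int_0^\infty e^{-t}\,t^{a_0+n-1}\,dt=\Gamma(a_0+n)=(a_0)_n\,\Gamma(a_0),$$
valid for $\Re(a_0)>0$, converts the right-hand side into $\Gamma(a_0)\sum_{n\ge 0}\frac{(a_0)_n\prod_{i=1}^{p}(a_i)_n}{\prod_{j=1}^{q}(b_j)_n}\,\frac{z^n}{n!}$. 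Dividing by $\Gamma(a_0)$ matches \eqref{eq:5} with the augmented numerator list $(a_0,a_1,\ldots,a_p)$, yielding the required ${}_{p+1}F_q$.

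The principal technical step is legitimising the interchange of sum and integral, and this is where the hypotheses $p\le q$ and $|z|<1$ do their work. I would invoke Fubini--Tonelli on the product of Lebesgue measure on $[0,\infty)$ with counting measure on $\mathbb{N}$, reducing the question to finiteness of
$$S:=\sum_{n=0}^{\infty}\left|\frac{\prod_{i=1}^{p}(a_i)_n}{\prod_{j=1}^{q}(b_j)_n}\right|\frac{|z|^n}{n!}\;\Gamma(\Re(a_0)+n).$$
Using $(a)_{n+1}/(a)_n=a+n$ together with the functional equation of $\Gamma$, the ratio of consecutive terms of $S$ equals $|z|\,(\Re(a_0)+n)\prod_i(a_i+n)\,\big/\,\bigl[\prod_j(b_j+n)\,(n+1)\bigr]$, whose leading behaviour as $n\to\infty$ is $|z|\,n^{p-q}$. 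Under $p\le q$ this is eventually bounded by $|z|$, and $|z|<1$ then gives convergence by the ratio test. With the interchange justified, the computation above delivers the identity, and the main obstacle---ensuring absolute convergence for Fubini---turns out to be precisely what the stated hypotheses are tailored to provide.
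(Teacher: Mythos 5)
The paper offers no proof of this statement at all: Theorem \ref{th:c1} is imported from the cited reference \cite{laplace transform} as a known Laplace-transform representation, so there is no internal argument to compare yours against. Your self-contained proof is correct and is the standard derivation. In particular, your repair of the hypothesis is the right one: the symbol $v$ in the paper's transcription is the first numerator parameter, so $\Re(v)>0$ must be read as $\Re(a_0)>0$ and $\Gamma(v)=\Gamma(a_0)$; otherwise the two sides cannot even agree at $z=0$. The chain expand--interchange--integrate, with $\int_0^\infty e^{-t}t^{a_0+n-1}\,dt=(a_0)_n\,\Gamma(a_0)$, is exactly how this representation is established in the literature, and your Tonelli justification is sound: the ratio of consecutive terms of your majorant $S$ is asymptotically $|z|\,n^{p-q}$, which tends to $|z|<1$ when $p=q$ and to $0$ when $p<q$, so $S<\infty$ and the interchange is legitimate. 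The only cosmetic remark is that the hypothesis $|z|<1$ is doing work only in the boundary case $p=q$ (where ${}_pF_q(zt)$ grows essentially like $e^{zt}$ and $e^{-t}$ must dominate); for $p<q$ your own ratio computation shows the identity holds for every $z$.
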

\begin{theo}[Saalschutz' theorem]\label{th:s}\rm{\cite{sp_function}} If $n$ is a non-negative integer and if $a, b, c$ are independent of $n$, then
\begin{align*}
    {}_{3} F_{2}\left[\begin{matrix}  & \quad \quad  -n, a, b;\\&c, 1-c+a+b-n;&\end{matrix}1\right] =\frac{(c-a)_n(c-b)_n}{(c)_n(c-a-b)_n}.
\end{align*}
\end{theo}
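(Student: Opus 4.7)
The plan is to derive Saalschutz's identity by expanding both sides of Euler's transformation
\[
{}_2F_1(a,b;c;z)\;=\;(1-z)^{c-a-b}\,{}_2F_1(c-a,\,c-b;\,c;\,z)
\]
as power series in $z$ on the unit disc and comparing the coefficient of $z^n$. Because the target ${}_3F_2$ has the truncating upper parameter $-n$, the identity will emerge as a finite convolution that can be recast as a hypergeometric sum.

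First I would expand $(1-z)^{c-a-b}=\sum_{m\ge 0}\tfrac{(a+b-c)_m}{m!}z^m$ via the binomial theorem, multiply with the series for ${}_2F_1(c-a,c-b;c;z)$, and equate coefficients of $z^n$ on the two sides of Euler's transformation. This yields the convolution
\[
\frac{(a)_n(b)_n}{(c)_n\,n!}\;=\;\sum_{k=0}^{n}\frac{(a+b-c)_{n-k}}{(n-k)!}\cdot\frac{(c-a)_k(c-b)_k}{(c)_k\,k!}.
\]
Next, applying the Pochhammer reflection formulas
\[
(\alpha)_{n-k}=\frac{(-1)^{k}(\alpha)_n}{(1-\alpha-n)_k},\qquad \frac{1}{(n-k)!}=\frac{(-1)^{k}(-n)_k}{n!}
\]
with $\alpha=a+b-c$ causes the two sign factors to cancel and converts $\tfrac{(a+b-c)_{n-k}}{(n-k)!}$ into $\tfrac{(a+b-c)_n\,(-n)_k}{n!\,(1-a-b+c-n)_k}$. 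Factoring $(a+b-c)_n/n!$ out of the sum rearranges the identity to
\[
{}_3F_2\!\left[\begin{matrix} -n,\; c-a,\; c-b;\\ c,\; 1-a-b+c-n;\end{matrix}\,1\right]=\frac{(a)_n(b)_n}{(c)_n\,(a+b-c)_n}.
\]
Finally, the substitution $a\mapsto c-a$, $b\mapsto c-b$ carries the upper parameters $c-a,c-b$ back to $a,b$, the lower parameter $1-a-b+c-n$ to $1-c+a+b-n$, and the right-hand side to $(c-a)_n(c-b)_n/\bigl[(c)_n(c-a-b)_n\bigr]$, which is exactly the statement of Theorem~\ref{th:s}.

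The main obstacle I expect is the careful bookkeeping of signs and shifted lower parameters when recasting the convolution as a ${}_3F_2$: one must verify that the $(-1)^{k}$ factors arising from the two Pochhammer reflections cancel cleanly and that the denominator parameter $1-a-b+c-n$ emerges with the right sign convention. The remaining algebra is routine, and once the reduction is done, the final relabeling step is purely notational.
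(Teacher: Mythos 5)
The paper does not prove this statement at all: Saalschutz' theorem is quoted verbatim from Rainville's \emph{Special Functions} \cite{sp_function} as a known classical result, and is used later only as a black box (in the bicomplex Saalschutz theorem, whose proof just splits into idempotent components and applies the complex version on each). So there is no in-paper argument to compare against; your proposal supplies the missing classical proof, and it is correct. It is in fact the standard textbook derivation: Euler's transformation ${}_2F_1(a,b;c;z)=(1-z)^{c-a-b}{}_2F_1(c-a,c-b;c;z)$, Cauchy-product comparison of the coefficient of $z^n$, and the two reflection identities
\[
(\alpha)_{n-k}=\frac{(-1)^k(\alpha)_n}{(1-\alpha-n)_k},\qquad \frac{1}{(n-k)!}=\frac{(-1)^k(-n)_k}{n!},
\]
both of which check out (the $(-1)^k$ factors do cancel), so the convolution becomes $\frac{(a+b-c)_n}{n!}\,{}_3F_2[-n,c-a,c-b;\,c,\,1-a-b+c-n;\,1]$ and the relabeling $a\mapsto c-a$, $b\mapsto c-b$ lands exactly on the stated identity. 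The only ingredients you take for granted are Euler's transformation and the legitimacy of comparing Taylor coefficients on $|z|<1$ (with $c$ not a non-positive integer), both standard; note also that the parameter set is automatically Saalschutzian and the series terminates because of the $-n$, so no convergence issue arises at $z=1$.
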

Special functions \cite{Das-2019,Das-Mehrez-2021,Das-Mehrez-2022,sp_function} play vital role in quantum physics, astrophysics and fluid dynamics. Many special functions such as the Gauss hypergeometric functions $({}_2F_{1})$, Confluent hypergeometric function $({}_1F_1)$ and Bessel function $(J_n(z))$ can be expressed as particular case of the generalized hypergeometric functions. Now a days researcher have focused on several special functions such as Gamma and Beta functions, Hypergeometric function, Mittag-Leffler function, Polygamma function, Riemman Zeta function, Hurwitz Zeta functions for the bicomplex argument \cite{gamma, bc_hypergeometric, polygamma, algebraic}. Motivated by the above results, we have studied generalized hypergeometric function for bicomplex argument.
\section{Bicomplex generalized hypergeometric function}
In this section, we introduce generalized hypergeometric function with bicomplex argument and discuss the convergence in bicomplex module. Let us define bicomplex generalized hypergeometric function as
\begin{align}\label{eq:gh}
{}_{p} F_{q}\left[\begin{matrix}&\alpha_1, &\alpha_2,\ldots, &\alpha_{p};\\&\beta_1,&\beta_2,\ldots, &\beta_
         {q};&\end{matrix}Z\right]=
  \sum_{n=0}^{\infty}\frac{\prod\limits_{i=1}^{p}(\alpha_i)_n}{\prod\limits_{j=1}^{q}(\beta_j)_n}\cdot \frac{Z^n}{n!},
\end{align}
where $Z=z_1e_1+z_2e_2,\;\alpha_i=\alpha_{1i}e_1+\alpha_{2i}e_2,\; \beta_j=\beta_{1j}e_1+\beta_{2j}e_2\in\mathbb{BC}$ for $i=1,2,\ldots,p$ and $j=1,2,\ldots,q$ with $\beta_{1j}$ and $\beta_{2j}$ are neither zero nor a negative integer and the series described in \eqref{eq:gh} is referred to as a bicomplex generalized hypergeometric series. The well definedness of the function \eqref{eq:gh} is also provided in the following theorem.
\begin{theo}
    Suppose that  $Z=z_1e_1+z_2e_2,\;\alpha_i=\alpha_{1i}e_1+\alpha_{2i}e_2,\; \beta_j=\beta_{1j}e_1+\beta_{2j}e_2 \in{\mathbb{BC}}$ for $i=1,2,\ldots,p$ and $j=1,2,\ldots,q$ with $\beta_{1j}$ and $\beta_{2j}$ are neither zero nor a negative integer and $\{e_1,e_2\}$ is idempotent basis of the bicomplex module
$ \mathbb{BC}$. Then idempotent representation of the bicomplex generalized  hypergeometric function is given by
 \begin{align}\label{eq:ab}
     {}_{p} F_{q}\left[\begin{matrix}&\alpha_{11}, &\alpha_{12},\ldots, &\alpha_{1p};\\&\beta_{11},&\beta_{12},\ldots,&\beta_{1q};&\end{matrix}z_1 \right]e_1 +{}_{p} F_{q}\left[\begin{matrix}&\alpha_{21}, &\alpha_{22},\ldots, &\alpha_{2p};\\&\beta_{21},&\beta_{22},\ldots,&\beta_{2q};&\end{matrix}z_2 \right]e_2.
 \end{align}
 \end{theo}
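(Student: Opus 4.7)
The plan is to exploit the purely algebraic behavior of the idempotent basis $\{e_1,e_2\}$: since $e_1^2=e_1$, $e_2^2=e_2$, $e_1e_2=0$, and $e_1+e_2=1$, every arithmetic operation on bicomplex numbers written in the form $u=u_1e_1+u_2e_2$ decouples into the same operation performed componentwise. I would therefore show that each ingredient in the defining series \eqref{eq:gh} already splits along $e_1$ and $e_2$, and then invoke linearity to move the scalars $e_1,e_2$ outside the infinite sum.

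Concretely, I would first establish three componentwise identities that are the building blocks of the entire argument. For a bicomplex number $\alpha=\alpha_1e_1+\alpha_2e_2$ and any $n\in\mathbb{N}\cup\{0\}$, an induction on $n$ (using $\alpha+k=(\alpha_1+k)e_1+(\alpha_2+k)e_2$ and the componentwise nature of the product) gives $(\alpha)_n=(\alpha_1)_n\,e_1+(\alpha_2)_n\,e_2$. The same argument yields $Z^n=z_1^n\,e_1+z_2^n\,e_2$. Finally, for the denominators I would use the hypothesis that $\beta_{1j}$ and $\beta_{2j}$ are neither zero nor negative integers; this guarantees that both $(\beta_{1j})_n$ and $(\beta_{2j})_n$ are nonzero for every $n$, so $(\beta_j)_n=(\beta_{1j})_n\,e_1+(\beta_{2j})_n\,e_2$ lies outside the null cone $\mathbb{O}_2$ and is therefore invertible in $\mathbb{BC}$ with
\[
\frac{1}{(\beta_j)_n}=\frac{1}{(\beta_{1j})_n}\,e_1+\frac{1}{(\beta_{2j})_n}\,e_2.
\]

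With these three identities in hand, the $n$-th term of the defining series becomes a product of expressions each written in the idempotent basis, and the rules $e_1^2=e_1$, $e_2^2=e_2$, $e_1e_2=0$ collapse the product to
\[
\frac{\prod_{i=1}^p(\alpha_i)_n}{\prod_{j=1}^q(\beta_j)_n}\cdot\frac{Z^n}{n!}
=\frac{\prod_{i=1}^p(\alpha_{1i})_n}{\prod_{j=1}^q(\beta_{1j})_n}\cdot\frac{z_1^n}{n!}\,e_1
+\frac{\prod_{i=1}^p(\alpha_{2i})_n}{\prod_{j=1}^q(\beta_{2j})_n}\cdot\frac{z_2^n}{n!}\,e_2.
\]
Summing term by term and pulling the constants $e_1$ and $e_2$ outside the summations then produces exactly the right-hand side \eqref{eq:ab}.

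The only genuine care needed lies in the last step: since the left-hand side of \eqref{eq:gh} is an infinite series in $\mathbb{BC}$, I have to be able to rearrange it as $(\text{sum})e_1+(\text{sum})e_2$. This is legitimate provided each of the two component series converges, which I would defer to the convergence discussion indicated in the section's opening (the standard ordinary hypergeometric series in $z_1$ and $z_2$). Hence the potential obstacle is not algebraic but analytic, namely guaranteeing that the componentwise splitting of the partial sums passes to the limit; however, because $e_1$ and $e_2$ are fixed idempotents and the addition on $\mathbb{BC}$ acts coordinatewise under the decomposition $Z=P_1(Z)e_1+P_2(Z)e_2$, the limit of the bicomplex partial sums is simply the $e_1$-combination of the two complex limits. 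This establishes \eqref{eq:ab} and hence the well-definedness of the bicomplex $_pF_q$.
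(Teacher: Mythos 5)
Your proposal is correct and follows essentially the same route as the paper: decompose the Pochhammer symbols, the powers $Z^n$, and the reciprocals along the idempotent basis, then sum termwise. The only difference is cosmetic — you prove $(\alpha)_n=(\alpha_1)_ne_1+(\alpha_2)_ne_2$ by induction and explicitly justify invertibility of $(\beta_j)_n$ and the passage to the limit, whereas the paper expands the product directly and leaves those points implicit.
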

 \begin{proof}
 Form the definition of the Pochhammer symbol \cite{sp_function}, we have
\begin{align}\label{eq:fa}
    (\alpha_i)_n
    &=(\alpha_i)(\alpha_i+1)(\alpha_i+2)\ldots(\alpha_i+n-1) \nonumber\\
    &=(\alpha_{1i}e_1+\alpha_{2i}e_2)(\alpha_{1i}e_1+\alpha_{2i}e_2+1)(\alpha_{1i}e_1+\alpha_{2i}e_2+2)\ldots(\alpha_{1i}e_1+\alpha_{2i}e_2+n-1)\nonumber\\
    &=(\alpha_{1i}e_1+\alpha_{2i}e_2) \{(\alpha_{1i}+1)e_1+(\alpha_{2i}+1)e_2\} \nonumber\\& \;\;\;\times \{(\alpha_{1i}+2)e_1+(\alpha_{2i}+2)e_2\}\ldots \{(\alpha_{1i}+n-1)e_1+(\alpha_{2i}+n-1)e_2\}\nonumber\\
    &=(\alpha_{1i})(\alpha_{1i}+1)(\alpha_{1i}+2)\ldots(\alpha_{1i}+n-1)e_1+(\alpha_{2i})(\alpha_{2i}+1)(\alpha_{2i}+2)\ldots(\alpha_{2i}+n-1)e_2\nonumber\\
    &=(\alpha_{1i})_n e_1+(\alpha_{2i})_n e_2.
\end{align}
Similarly, we get
\begin{align}\label{eq:fb}
    (\beta_i)_n =(\beta_{1j})_n e_1+(\beta_{2j})_n e_2.
\end{align}
Using \eqref{eq:fa}, \eqref{eq:fb} and \eqref{eq:gh}, we obtain
\begin{align*}
   {}_{p} F_{q}\left[\begin{matrix}&\alpha_1, &\alpha_2,\ldots,&\alpha_{p};\\&\beta_1,&\beta_2,\ldots,&\beta_{q};&\end{matrix}Z\right]
   &=\sum_{n=0}^{\infty}\frac{\prod\limits_{i=1}^{p}(\alpha_{1i}e_1+\alpha_{2i}e_2)_n}{\prod\limits_{j=1}^{q}(\beta_{1j}e_1+\beta_{2j}e_2)_n}\cdot\frac{(z_1^{n} e_1+z_2^{n}e_2)}{n!}\nonumber\\
  & =\sum_{n=0}^{\infty}\frac{\prod\limits_{i=1}^{p}(\alpha_{1i})_n}{\prod\limits_{j=1}^{q}(\beta_{1j})_n}\cdot\frac{z_1^n}{n!} e_1+\sum_{n=0}^{\infty}\frac{\prod\limits_{i=1}^{p}(\alpha_{2i})_n}{\prod\limits_{j=1}^{q}(\beta_{2j})_n}\cdot\frac{z_2^n}{n!} e_2\nonumber\\
  &={}_{p} F_{q}\left[\begin{matrix}&\alpha_{11}, &\alpha_{12},\ldots, &\alpha_{1p};\\&\beta_{11},&\beta_{12},\ldots,&\beta_{1q};&\end{matrix}z_1 \right]e_1 +{}_{p} F_{q}\left[\begin{matrix}&\alpha_{21}, &\alpha_{22},\ldots, &\alpha_{2p};\\&\beta_{21},&\beta_{22},\ldots,&\beta_{2q};&\end{matrix}z_2 \right]e_2.
\end{align*}
Now, $\beta_{1j}$ and $\beta_{2j}$ are neither zero nor a negative integer for $i=1,2,\ldots,p$ and $j=1,2,\ldots,q$ so that the function  ${}_{p} F_{q}\left[\begin{matrix}&\alpha_{11}, &\alpha_{12},\ldots, &\alpha_{1p};\\&\beta_{11},&\beta_{12},\ldots,&\beta_{1q};&\end{matrix}z_1 \right]$ and ${}_{p} F_{q}\left[\begin{matrix}&\alpha_{21}, &\alpha_{22},\ldots, &\alpha_{2p};\\&\beta_{21},&\beta_{22},\ldots,&\beta_{2q};&\end{matrix}z_2 \right]$ are well defined. Which completes the proof of the theorem.
\end{proof}
\begin{theo}
    Assume that  $Z=z_1e_1+z_2e_2,\;\alpha_i=a_{1i}+i_2a_{2i}=\alpha_{1i}e_1+\alpha_{2i}e_2$ and $\beta_j=b_{1j}+i_2b_{2j}=\beta_{1j}e_1+\beta_{2j}e_2 \in{\mathbb{BC}}$ for $i=1,2,\ldots,p$ and $j=1,2,\ldots,q$ with $\beta_{1j}$ and $\beta_{2j}$ are neither zero nor a negative integer and $\{e_1,e_2\}$ is idempotent basis of the bicomplex module
$ \mathbb{BC}$. Then convergence condition of the bicomplex generalized hypergeometric series\eqref{eq:gh} as follows :
\begin{description}
    \item[(a)] If $p\leq q$, then the series is absolutely hyperbolic convergent for all $Z\in \mathbb{BC}$.
    \item[(b)] If $p=q+1$, then the series hyperbolically converges absolutely in the ball $\mathbb{B}_h(0,1)$ and diverges on the complement of its closure. Moreover, the series hyperbolic uniformly and absolutely convergent on the boundary of the ball $\mathbb{B}_h(0,1)$ if
\begin{align}\label{eq:asf}
    \Re\left(\sum\limits_{j=1}^{q}b_{1j}-\sum\limits_{i=1}^{p}a_{1i}\right)>\left|\Im\left(\sum\limits_{j=1}^{q}b_{2j}-\sum\limits_{i=1}^{p}a_{2i}\right)\right|.
\end{align}
    \item[(c)] If $p>q+1$, then the series diverges for all nonzero $Z\in \mathbb{BC}$.
\end{description}
\end{theo}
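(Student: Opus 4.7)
The plan is to leverage the idempotent decomposition established in the previous theorem, which writes the bicomplex series as
\[
{}_{p}F_{q}\!\left[\alpha_{11},\ldots,\alpha_{1p};\beta_{11},\ldots,\beta_{1q};z_1\right]e_1
+{}_{p}F_{q}\!\left[\alpha_{21},\ldots,\alpha_{2p};\beta_{21},\ldots,\beta_{2q};z_2\right]e_2,
\]
a sum of two classical complex hypergeometric series. Since the hyperbolic norm satisfies $|Z|_h=|z_1|e_1+|z_2|e_2$, absolute hyperbolic convergence of the bicomplex series is equivalent to simultaneous absolute convergence of the two complex components, and $|Z|_h <_h 1$ is equivalent to $|z_1|<1$ together with $|z_2|<1$. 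The whole argument therefore reduces to invoking the classical theory of ${}_{p}F_{q}$ on each idempotent component and translating the resulting conditions back to the bicomplex parameters.

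For part (a), the classical ratio test applied to each component: when $p\leq q$ the consecutive-term ratio tends to zero for every $z\in\mathbb{C}$, which yields absolute convergence of both complex components at every $z_1,z_2$ and hence absolute hyperbolic convergence of the bicomplex series for all $Z\in\mathbb{BC}$. For part (c), when $p>q+1$ each classical series diverges at every nonzero complex argument; a nonzero $Z\in\mathbb{BC}$ has at least one nonzero idempotent component, forcing divergence of the corresponding complex series and therefore of the bicomplex one.

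Part (b) is the substantive case. When $p=q+1$ the classical series has radius of convergence $1$, so both components converge absolutely precisely on the bidisc $\{|z_1|<1\}\cap\{|z_2|<1\}$, i.e.\ on $\mathbb{B}_h(0,1)$, while at least one component diverges on the complement of the closure. On the hyperbolic unit boundary I would invoke the classical absolute/uniform convergence criterion $\Re(\sum b_j-\sum a_i)>0$ for ${}_{q+1}F_q$ on $|z|\le 1$, applied to each component separately. To identify the combined bicomplex condition, use $\alpha_{1i}=a_{1i}-i_1 a_{2i}$ and $\alpha_{2i}=a_{1i}+i_1 a_{2i}$ (and the analogous identities for $\beta_{kj}$) to compute
\[
\Re(\alpha_{1i})=\Re(a_{1i})+\Im(a_{2i}),\qquad \Re(\alpha_{2i})=\Re(a_{1i})-\Im(a_{2i}).
\]
The pair of classical inequalities $\Re(\sum\beta_{1j}-\sum\alpha_{1i})>0$ and $\Re(\sum\beta_{2j}-\sum\alpha_{2i})>0$ then reads $\Re(\sum b_{1j}-\sum a_{1i})\pm\Im(\sum b_{2j}-\sum a_{2i})>0$, whose simultaneous validity is exactly condition \eqref{eq:asf}.

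The main technical hurdle is this arithmetic translation in part (b); once the real parts in the idempotent representation are expressed in terms of $(a_{1i},a_{2i},b_{1j},b_{2j})$, every claim reduces to the componentwise classical convergence theory. A minor supplementary point is that hyperbolic uniform and absolute convergence on $\partial\mathbb{B}_h(0,1)$ follows from ordinary uniform and absolute convergence on each unit circle $|z_k|=1$, because the partial sums decompose idempotently and the hyperbolic norm reads off each complex component independently.
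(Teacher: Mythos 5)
Your proposal is correct and follows essentially the same route as the paper: idempotent decomposition into two classical ${}_pF_q$ series, componentwise ratio test for the radius of convergence in cases (a)--(c), and the translation of condition \eqref{eq:asf} into the pair of componentwise inequalities $\Re\bigl(\sum_j\beta_{sj}-\sum_i\alpha_{si}\bigr)>0$ for $s=1,2$ via $\Re(\alpha_{1i})=\Re(a_{1i})+\Im(a_{2i})$ and $\Re(\alpha_{2i})=\Re(a_{1i})-\Im(a_{2i})$. The only difference is one of detail: where you cite the classical boundary criterion, the paper re-derives it by estimating $|a_n|_h\sim C_1 e_1/n^{\eta_1+1}+C_2 e_2/n^{\eta_2+1}$ from the Gamma-function asymptotics and then applying the hyperbolic Weierstrass M-test.
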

\begin{proof}
Let us consider
\begin{align}\label{eq:asd}
     {}_{p} F_{q}\left[\begin{matrix}&\alpha_1, &\alpha_2,\ldots, &\alpha_{p};\\&\beta_1,&\beta_2,\ldots,&\beta_
         {q};&\end{matrix}Z\right]=
  \sum_{n=0}^{\infty}a_n {Z^n} =\left(\sum_{n=0}^{\infty}a_{1n} z_1^{n}\right)e_1+\left(\sum_{n=0}^{\infty}a_{2n} z_2^{n}\right)e_2,
\end{align}
where $a_{1n}=\frac{\prod\limits_{i=1}^{p}(\alpha_{1i})_n}{\prod\limits_{j=1}^{q}(\beta_{1j})_n}\frac{1}{n!} $ and $a_{2n}=\frac{\prod\limits_{i=1}^{p}(\alpha_{2i})_n}{\prod\limits_{j=1}^{q}(\beta_{2j})_n}\frac{1}{n!} $. We will now check convergence of the bicomplex generalized hypergeometric function using hyperbolic ratio test \cite{bc_root}. Let $R=R_1e_1+R_2e_2$ be the radius of convergence of the bicomplex power series \eqref{eq:asd}. Then
\begin{align}\label{eq:su}
    R&=R_1e_1+R_2e_2=\lim_{n\to \infty} \sup\frac{|a_n|_h}{|a_{n+1}|_h}\nonumber\\
    &=\lim_{n\to \infty} \sup\frac{|a_{1n}|}{|a_{1n+1}|}e_1+\lim_{n\to \infty} \sup\frac{|a_{2n}|}{|a_{2n+1}|}e_2\nonumber\\
    &=\lim_{n\to \infty} \sup\left|\frac{\prod\limits_{j=1}^{q}(\beta_{1j}+n)(n+1)}{\prod\limits_{i=1}^{p}(\alpha_{1i}+n)}\right | e_1+ \lim_{n\to \infty} \sup\left|\frac{\prod\limits_{j=1}^{q}(\beta_{2j}+n)(n+1)}{\prod\limits_{i=1}^{p}(\alpha_{2i}+n)}\right | e_2\nonumber\\
    &=\lim_{n\to \infty} {\sup} \left|\frac{\prod\limits_{j=1}^{q}(\frac{\beta_{1j}}{n}+1)(1+\frac{1}{n})}{\prod\limits_{i=1}^{p}(\frac{\alpha_{1i}}{n}+1)\times n^{p-1-q}}\right | e_1+ \lim_{n\to \infty} \sup\left|\frac{\prod\limits_{j=1}^{q}(\frac{\beta_{2j}}{n}+1)(1+\frac{1}{n})}{\prod\limits_{i=1}^{p}(\frac{\alpha_{2i}}{n}+1)\times n^{p-1-q}}\right | e_2.
\end{align}
Using hyperbolic ratio test \cite{bc_root} and \eqref{eq:su}, we obtain
\begin{description}
    \item[(a)] If $p\leq q$, then $R_1=\infty$ and $R_2=\infty$, which implies that $R=\infty$, so the series \eqref{eq:gh} is absolutely hyperbolic convergent for all $Z\in \mathbb{BC}$.
    \item[(b)] If $p=q+1$, then $R_1=1$ and $R_2=1$, that yields $R=e_1+e_2=1$, the series \eqref{eq:gh} hyperbolically converges absolutely in the ball $\mathbb{B}_h(0,1)$ and diverges on the complement of its closure. The ball $\mathbb{B}_h(0,1)$ is given by cartesian product of the two disk $B_{e_1}\left(0,\frac{1}{\sqrt{2}}\right)$  and $B_{e_2}\left(0,\frac{1}{\sqrt{2}}\right)$ are situated in the plane $\mathbb{BC}_{e_1}=\{ze_1:z\in{\mathbb{C}}\}$ and $\mathbb{BC}_{e_2}=\{ze_2:z\in{\mathbb{C}}\}$ respectively (see figure \ref{fig:sd}). Though the figure \ref{fig:sd}(c) is indeed of four dimensional object but for abstract visualization it is represented by the interior of the three dimensional object.
    \item[(c)] If $p>q+1$, then $R=0$. Hence the series \eqref{eq:gh} diverges for all nonzero $Z\in \mathbb{BC}$.
\end{description}
\begin{figure}[h!]
  \centering
  \begin{subfigure}{0.28\linewidth}
    \includegraphics[width=\linewidth]{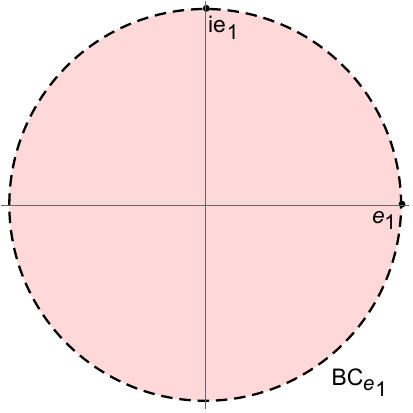}
    \caption{Disk $B_{e_1}$.}
  \end{subfigure}
  \begin{subfigure}{0.28\linewidth}
    \includegraphics[width=\linewidth]{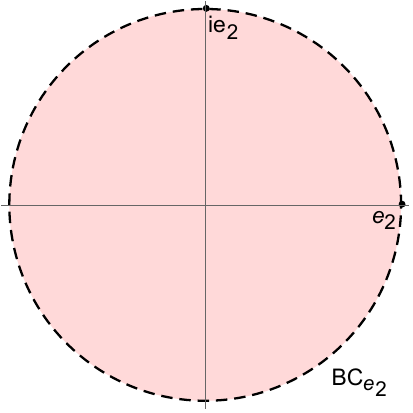}
    \caption{Disk $B_{e_2}$.}
  \end{subfigure}
  \begin{subfigure}{0.38\linewidth}
    \includegraphics[width=\linewidth]{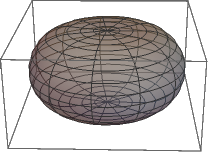}
    \caption{Outer surface of $B_{e_1}\times B_{e_2}$.}
  \end{subfigure}
  \caption{}
  \label{fig:sd}
\end{figure}
Now, we will check the convergence of the series \eqref{eq:gh} on the boundary of the ball $\mathbb{B}_h(0,1)$ for the condition $p=q+1$.
In this case, $|Z|_h=1$ which implies $|z_1|=1$ and $|z_2|=1$. From given inequality \eqref{eq:asf} we have
\begin{align}\label{eq:jn}
     \Re\left(\sum\limits_{j=1}^{q}b_{1j}-\sum\limits_{i=1}^{p}a_{1i}\right)+\Im\left(\sum\limits_{j=1}^{q}b_{2j}-\sum\limits_{i=1}^{p}a_{2i}\right)>0
\end{align} and
\begin{align}\label{eq:cr}
     \Re\left(\sum\limits_{j=1}^{q}b_{1j}-\sum\limits_{i=1}^{p}a_{1i}\right)-\Im\left(\sum\limits_{j=1}^{q}b_{2j}-\sum\limits_{i=1}^{p}a_{2i}\right)>0.
\end{align}
Since $\alpha_i=a_{1i}+i_2a_{2i}$ and $\beta_j=b_{1j}+i_2b_{2j}$ then from inequality \eqref{eq:jn}, we get
\begin{align*}
     \Re\left(\sum\limits_{j=1}^{q}(b_{1j}-i_1b_{2j})-\sum\limits_{i=1}^{p}(a_{1i}-i_1a_{2i})\right)>0 \implies \Re(\sum\limits_{j=1}^{q}\beta_{1j}-\sum\limits_{i=1}^{p} \alpha_{1i})>0.
\end{align*}
Similarly, from other inequality \eqref{eq:cr}, we obtain
$$\Re(\sum\limits_{j=1}^{q}\beta_{2j}-\sum\limits_{i=1}^{p} \alpha_{2i})>0.$$
We assume that $\eta_1=\Re\left(\sum\limits_{j=1}^{q}\beta_{1j}-\sum\limits_{i=1}^{p} \alpha_{1i}\right)>0$ and $\eta_2=\Re(\sum\limits_{j=1}^{q}\beta_{2j}-\sum\limits_{i=1}^{p} \alpha_{2i})>0$.\\
Now,
\begin{align}
    |a_n|_h&=|a_{1n}|e_1 +|a_{2n}| e_2 \nonumber\\
      &=\left|\frac{\prod\limits_{i=1}^{p}(\alpha_{1i})_n}{\prod\limits_{j=1}^{q}(\beta_{1j})_n}.\frac{1}{n!}\right| e_1+\left|\frac{\prod\limits_{i=1}^{p}(\alpha_{2i})_n}{\prod\limits_{j=1}^{q}(\beta_{2j})_n}.\frac{1}{n!}\right| e_2\nonumber\\
      &=\left|\frac{\prod\limits_{i=1}^{p}\frac{(\alpha_{1i})_n}{(n-1)!\cdot n^{\alpha_{1i}}}}{\prod\limits_{j=1}^{q}\frac{(\beta_{1j})_n}{(n-1)!\cdot n^{\beta_{1j}}}}\times \frac{1}{n^{\left(\sum\limits_{j=1}^{q}\beta_{1j}-\sum\limits_{i=1}^{p} \alpha_{1i}+1\right)}}\right|e_1+\left|\frac{\prod\limits_{i=1}^{p}\frac{(\alpha_{2i})_n}{(n-1)!\cdot n^{\alpha_{2i}}}}{\prod\limits_{j=1}^{q}\frac{(\beta_{2j})_n}{(n-1)!\cdot n^{\beta_{2j}}}}\times \frac{1}{n^{\left(\sum\limits_{j=1}^{q}\beta_{2j}-\sum\limits_{i=1}^{p} \alpha_{2i}+1\right)}}\right|e_2\nonumber\\
      &= \left|\frac{\prod\limits_{i=1}^{p}\frac{1}{\Gamma(\alpha_{1i})}}{\prod\limits_{j=1}^{q}\frac{1}{\Gamma(\beta_{1j})}}\times \frac{1}{n^{\left(\sum\limits_{j=1}^{q}\beta_{1j}-\sum\limits_{i=1}^{p} \alpha_{1i}+1\right)}} \right|e_1+ \left|\frac{\prod\limits_{i=1}^{p}\frac{1}{\Gamma(\alpha_{2i})}}{\prod\limits_{j=1}^{q}\frac{1}{\Gamma(\beta_{2j})}}\times \frac{1}{n^{\left(\sum\limits_{j=1}^{q}\beta_{2j}-\sum\limits_{i=1}^{p} \alpha_{2i}+1\right)}}\right|e_2 \nonumber\\
      &=\left|\frac{\prod\limits_{i=1}^{p}\frac{1}{\Gamma(\alpha_{1i})}}{\prod\limits_{j=1}^{q}\frac{1}{\Gamma(\beta_{1j})}}\right|\times \frac{e_1}{n^{\eta_1+1}}+ \left|\frac{\prod\limits_{i=1}^{p}\frac{1}{\Gamma(\alpha_{2i})}}{\prod\limits_{j=1}^{q}\frac{1}{\Gamma(\beta_{2j})}}\right|\times \frac{e_2}{n^{\eta_2+1}}.\nonumber
\end{align}
Now we consider $M_n=\left|\frac{\prod\limits_{i=1}^{p}\frac{1}{\Gamma(\alpha_{1i})}}{\prod\limits_{j=1}^{q}\frac{1}{\Gamma(\beta_{1j})}}\right|\times \frac{e_1}{n^{\eta_1+1}}+ \left|\frac{\prod\limits_{i=1}^{p}\frac{1}{\Gamma(\alpha_{2i})}}{\prod\limits_{j=1}^{q}\frac{1}{\Gamma(\beta_{2j})}}\right|\times \frac{e_2}{n^{\eta_2+1}} $. Since, the series $\sum M_n$ is hyperbolic convergent, then by
hyperbolic Weierstrass M-test  \cite{bc_root}, we get the series $\sum a_n Z^{n}$ is hyperbolic uniformly and absolutely convergent.
The proof is now completed.
\end{proof}
\begin{corollary}
    Setting $p=1$ and $q=1$ in \eqref{eq:gh}, we get bicomplex confluent hypergeometric function as follows
    \begin{align*}
        {}_1F_1(\alpha_1;\beta_1;Z)=\sum\limits_{n=0}^{\infty}\frac{(\alpha_1)_n}{(\beta_1)_n}.\frac{Z^n}{n!},
    \end{align*}
    where $\alpha_1=\alpha_{11}e_1+\alpha_{21}e_2,\;\beta_1=\beta_{11}e_1+\beta_{21}e_2$, $Z\in \mathbb{BC}$ with $\beta_{11}$ and $\beta_{21}$ are neither zero nor a negative integer.
\end{corollary}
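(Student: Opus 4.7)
The plan is to observe that this corollary is a direct specialization of the definition \eqref{eq:gh}, so no real argument is required beyond substitution and a sanity check against the previously established convergence criterion. First, I would set $p=1$ and $q=1$ in \eqref{eq:gh}, which collapses both products $\prod_{i=1}^{p}(\alpha_i)_n$ and $\prod_{j=1}^{q}(\beta_j)_n$ to the single Pochhammer factors $(\alpha_1)_n$ and $(\beta_1)_n$ respectively. The resulting series is exactly the expression stated in the corollary, which we then designate as the bicomplex confluent hypergeometric function ${}_1F_1(\alpha_1;\beta_1;Z)$.

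Next, I would verify that the object on the right-hand side is well defined. Since $\beta_1=\beta_{11}e_1+\beta_{21}e_2$ with $\beta_{11}$ and $\beta_{21}$ neither zero nor negative integers, the hypotheses of the preceding theorem on idempotent decomposition apply, so $(\beta_1)_n=(\beta_{11})_n\,e_1+(\beta_{21})_n\,e_2$ is never a zero divisor, and the quotients $(\alpha_1)_n/(\beta_1)_n$ make sense inside $\mathbb{BC}$.

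Finally, I would invoke part \textbf{(a)} of the convergence theorem (with $p=q=1$, so $p\leq q$) to record that the series converges absolutely in the hyperbolic sense for every $Z\in\mathbb{BC}$, which justifies the formal manipulations. There is essentially no obstacle here; the only minor care needed is to state the non-degeneracy conditions on the idempotent components $\beta_{11},\beta_{21}$ so that the series is meaningful term by term, exactly as carried over from the general definition.
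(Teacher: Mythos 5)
Your proposal is correct and matches the paper, which presents this corollary as an immediate specialization of the definition \eqref{eq:gh} with no separate proof: one simply sets $p=q=1$ so that both products reduce to single Pochhammer factors. Your additional remarks on well-definedness via the idempotent decomposition and on absolute hyperbolic convergence for all $Z\in\mathbb{BC}$ (part \textbf{(a)} of the convergence theorem) are consistent with what the paper has already established and add nothing that would change the argument.
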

\begin{corollary}
    Substituting of $p=2$ and $q=1$ in \eqref{eq:gh}, yields the bicomplex hypergeometric function as
 \begin{align*}
{}_2F_1(\alpha_1,\alpha_2;\beta_1;Z)=\sum\limits_{n=0}^{\infty}\frac{(\alpha_1)_n(\alpha_2)_n}{(\beta_1)_n}.\frac{Z^n}{n!},
\end{align*}
where $Z\in\mathbb{B}(0,1)$, $\alpha_1=\alpha_{11}e_1+\alpha_{21}e_2,\;\alpha_2=\alpha_{12}e_1+\alpha_{22}e_2, \;\beta_1=\beta_{11}e_1+\beta_{21}e_2\in \mathbb{BC}$ with $\beta_{11}$ and $\beta_{21}$ are neither zero nor negative integer and $\{e_1, e_2\}$ is idempotent basis of the bicomplex module $\mathbb{BC}$.
\end{corollary}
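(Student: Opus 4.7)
The plan is to obtain this corollary by direct specialization of the preceding definition and convergence theorem, so no genuinely new argument is required. First I would set $p=2$ and $q=1$ in the defining series \eqref{eq:gh}: the product $\prod_{i=1}^{p}(\alpha_i)_n$ in the numerator collapses to $(\alpha_1)_n(\alpha_2)_n$ and the product $\prod_{j=1}^{q}(\beta_j)_n$ in the denominator collapses to $(\beta_1)_n$, giving exactly the displayed formula for ${}_2F_1(\alpha_1,\alpha_2;\beta_1;Z)$.

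Next I would pin down the domain of convergence by invoking part (b) of the convergence theorem just proved. Since $p=q+1$ in this case, that theorem gives radius of hyperbolic convergence equal to $e_1+e_2=1$, so the series converges absolutely in the hyperbolic sense on the ball $\mathbb{B}_h(0,1)$ and diverges outside its closure; this is the intended meaning of the region $\mathbb{B}(0,1)$ appearing in the statement. The hypothesis that $\beta_{11}$ and $\beta_{21}$ are neither zero nor negative integers is precisely what is needed to keep every Pochhammer symbol $(\beta_1)_n=(\beta_{11})_n e_1+(\beta_{21})_n e_2$ (via \eqref{eq:fb}) away from the null cone, so that term-by-term division in the series makes sense.

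For completeness, I would also record the idempotent decomposition obtained by specializing \eqref{eq:ab}, namely
\begin{equation*}
{}_2F_1(\alpha_1,\alpha_2;\beta_1;Z)={}_2F_1(\alpha_{11},\alpha_{12};\beta_{11};z_1)\,e_1+{}_2F_1(\alpha_{21},\alpha_{22};\beta_{21};z_2)\,e_2,
\end{equation*}
which reduces well-definedness of the bicomplex object to well-definedness of the two classical Gauss hypergeometric scalar components, both of which are guaranteed by the condition on $\beta_{11},\beta_{21}$ together with $|z_1|<1$, $|z_2|<1$.

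The only care needed is bookkeeping, matching the general indexing $\alpha_i=\alpha_{1i}e_1+\alpha_{2i}e_2$, $\beta_j=\beta_{1j}e_1+\beta_{2j}e_2$ with the specialized labels $\alpha_1,\alpha_2,\beta_1$; no genuine obstacle appears, since everything is a transparent specialization of results already established in the theorem and its well-definedness companion.
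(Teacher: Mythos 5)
Your proposal is correct and matches the paper's intent exactly: the paper states this corollary without proof as an immediate specialization of \eqref{eq:gh}, and your substitution of $p=2$, $q=1$, together with the appeal to part (b) of the convergence theorem for the ball $\mathbb{B}_h(0,1)$ and the idempotent decomposition from \eqref{eq:ab}, is precisely the reasoning the authors leave implicit. Nothing is missing.
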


     \section{ Integral representations of the bicomplex generalized hypergeometric function}
 In this section, we establish three different types of integral representations of the bicomplex generalized hypergeometric function.
  \begin{theo}\label{th:ru}
     Suppose that $Z\in\mathbb{B}_h(0,1)$, $\Re(b_{11}-a_{11})>|\Im(b_{21}-a_{21})|$ and $\Re(a_{11})>|\Im(a_{21})|$. The integral representation of bicomplex generalized hypergeometric function is given by
     \begin{align*}
         & {}_{p} F_{q}\left[\begin{matrix}&\alpha_1, &\alpha_2,\ldots, &\alpha_{p};\\&\beta_1,&\beta_2,\ldots,&\beta_
         {q};&\end{matrix}Z\right]
         =\frac{\Gamma(\beta_{1})}{\Gamma(\alpha_{1})\Gamma(\beta_{1}-\alpha_{1})} \left[\sum_{n=0}^{\infty} \frac{\prod\limits_{i=2}^{p} (\alpha_{i})_n}{\prod\limits_{j=2}^{q} (\beta_{j})_n}\cdot\frac{Z^n}{n!}\int_{C} t^{\alpha_{1}+n-1}(1-t)^{\beta_{1}-\alpha_{1}-1}dt\right],
     \end{align*}
    where $\alpha_i=a_{1i}+i_2a_{2i}=\alpha_{1i}e_1+\alpha_{2i}e_2,\; \beta_j=b_{1j}+i_2b_{2j}=\beta_{1j}e_1+\beta_{2j}e_2,\;t=t_1e_1+t_2e_2 \in{\mathbb{BC}}$ for $i=1,2,\ldots,p$ and $j=1,2,\ldots,q$ with $\beta_{1j}$ and $\beta_{2j}$ are neither zero nor a negative integer and $C(t)=\left(C_1(t_1),C_2(t_2)\right)$ be a curve in $\mathbb{BC}$ with $0\leq t_1,t_2\leq 1$.
 \end{theo}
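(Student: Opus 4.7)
The plan is to reduce the bicomplex statement to the classical scalar integral representation \eqref{eq:in} applied separately in each idempotent slot, and then to reassemble the two pieces into a single bicomplex line integral.

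\textbf{Step 1: Idempotent split and translation of hypotheses.} I would first apply the idempotent representation established earlier to rewrite the left-hand side as
\[
{}_{p}F_{q}(\alpha_{11},\ldots;\beta_{11},\ldots;z_1)\,e_1 + {}_{p}F_{q}(\alpha_{21},\ldots;\beta_{21},\ldots;z_2)\,e_2.
\]
Next I would translate the hypotheses to the two components. Since $\alpha_{1i}=a_{1i}-i_1 a_{2i}$ and $\alpha_{2i}=a_{1i}+i_1 a_{2i}$, the condition $\Re(a_{11})>|\Im(a_{21})|$ splits into $\Re(\alpha_{11})>0$ and $\Re(\alpha_{21})>0$ by exactly the same manipulation that led from \eqref{eq:jn}--\eqref{eq:cr} in the convergence theorem; similarly $\Re(b_{11}-a_{11})>|\Im(b_{21}-a_{21})|$ yields $\Re(\beta_{11}-\alpha_{11})>0$ and $\Re(\beta_{21}-\alpha_{21})>0$. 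Moreover, $Z\in\mathbb{B}_h(0,1)$ is equivalent to $|z_1|<1$ and $|z_2|<1$.

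\textbf{Step 2: Apply the scalar integral representation in each slot.} With the component hypotheses in hand, the classical identity \eqref{eq:in} applies to each complex hypergeometric in the decomposition above, giving, for $k=1,2$,
\[
{}_{p}F_{q}(\alpha_{k1},\ldots;\beta_{k1},\ldots;z_k)=\frac{\Gamma(\beta_{k1})}{\Gamma(\alpha_{k1})\Gamma(\beta_{k1}-\alpha_{k1})}\int_0^1 t_k^{\alpha_{k1}-1}(1-t_k)^{\beta_{k1}-\alpha_{k1}-1}\,{}_{p-1}F_{q-1}(\ldots;z_kt_k)\,dt_k.
\]
I would then expand the inner ${}_{p-1}F_{q-1}$ as a power series and interchange sum and integral (justified by uniform convergence on the compact interval $[0,1]$ under the stated real-part conditions), obtaining the scalar analogue of the claimed formula on each slot.

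\textbf{Step 3: Reassemble as a bicomplex integral.} Multiplying the $k=1$ identity by $e_1$, the $k=2$ identity by $e_2$, and adding, I would combine the Gamma prefactors into $\Gamma(\beta_1)/[\Gamma(\alpha_1)\Gamma(\beta_1-\alpha_1)]$ via \eqref{eq:g}, combine the Pochhammer ratios using \eqref{eq:fa}--\eqref{eq:fb}, combine the powers using $Z^n=z_1^n e_1+z_2^n e_2$, and finally recognize the two component Beta-type integrals as the idempotent components of the bicomplex line integral along $C(t)=(C_1(t_1),C_2(t_2))$:
\[
\int_{C}t^{\alpha_1+n-1}(1-t)^{\beta_1-\alpha_1-1}\,dt=\int_0^1 t_1^{\alpha_{11}+n-1}(1-t_1)^{\beta_{11}-\alpha_{11}-1}\,dt_1\,e_1+\int_0^1 t_2^{\alpha_{21}+n-1}(1-t_2)^{\beta_{21}-\alpha_{21}-1}\,dt_2\,e_2.
\]

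\textbf{Main obstacle.} The genuinely substantive step is the last one: one must justify that the bicomplex integral along the prescribed product curve $C$ decomposes idempotently into the two real-interval integrals above, so that the combinatorial reassembly of the two scalar identities indeed produces the claimed bicomplex expression. Once this decomposition (together with the idempotent multiplicativity of $\Gamma$, of Pochhammer symbols, and of powers) is recorded, the remainder of the argument is bookkeeping.
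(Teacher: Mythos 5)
Your proposal is correct and follows essentially the same route as the paper: idempotent decomposition via \eqref{eq:ab}, application of the classical representation \eqref{eq:in} on each component, and reassembly of the Gamma factors, Pochhammer ratios, and Beta-type integrals into a single bicomplex line integral over the product curve $C$. The only (minor, and welcome) difference is that you make explicit the translation of the hypotheses $\Re(a_{11})>|\Im(a_{21})|$ and $\Re(b_{11}-a_{11})>|\Im(b_{21}-a_{21})|$ into the componentwise conditions $\Re(\alpha_{k1})>0$ and $\Re(\beta_{k1}-\alpha_{k1})>0$, a step the paper leaves implicit.
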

 \begin{proof}
 Using \eqref{eq:ab}, \eqref{eq:g} and \eqref{eq:in}, we get
 \begin{align*}
    & {}_{p} F_{q}\left[\begin{matrix}&\alpha_1, &\alpha_2,\ldots, &\alpha_{p};\\&\beta_1,&\beta_2,\ldots,&\beta_
         {q};&\end{matrix}Z\right]\\&={}_{p} F_{q}\left[\begin{matrix}&\alpha_{11}, &\alpha_{12},\ldots, &\alpha_{1p};\\&\beta_{11},&\beta_{12},\ldots,&\beta_{1q};&\end{matrix}z_1 \right]e_1 +{}_{p} F_{q}\left[\begin{matrix}&\alpha_{21}, &\alpha_{22},\ldots, &\alpha_{2p};\\&\beta_{21},&\beta_{22},\ldots,&\beta_{2q};&\end{matrix}z_2 \right]e_2\\
         &= \frac{\Gamma(\beta_{11})}{\Gamma(\alpha_{11})\Gamma(\beta_{11}-\alpha_{11})} \left[ \int_{0}^1 {t_1}^{\alpha_{11}-1}(1-t_1)^{\beta_{11}-\alpha_{11}-1}{}_{p-1} F_{q-1}\left[\begin{matrix}&\alpha_{12},\ldots, &\alpha_{1p};\\&\beta_{12},\ldots,&\beta_{1q};&\end{matrix}(z_1 t_1)\right] dt_1 \right]e_1\\
         &\quad+\frac{\Gamma(\beta_{21})}{\Gamma(\alpha_{21})\Gamma(\beta_{21}-\alpha_{21})} \left[ \int_{0}^1 {t_2}^{\alpha_{21}-1}(1-t_2)^{\beta_{21}-\alpha_{21}-1}{}_{p-1} F_{q-1}\left[\begin{matrix}&\alpha_{22},\ldots, &\alpha_{2p};\\&\beta_{22},\ldots,&\beta_{2q};&\end{matrix}(z_2t_2)\right] dt_2 \right]e_2\\
         &=\frac{\Gamma(\beta_{11})}{\Gamma(\alpha_{11})\Gamma(\beta_{11}-\alpha_{11})} \left[ \int_{C_1} {t_1}^{\alpha_{11}-1}(1-t_1)^{\beta_{11}-\alpha_{11}-1}\sum_{n=0}^{\infty} \frac{\prod\limits_{i=2}^{p} (\alpha_{1i})_n}{\prod\limits_{j=2}^{q} (\beta_{1j})_n}\cdot\frac{(z_1t_1)^n}{n!} dt_1 \right]e_1\\
         &\quad+\frac{\Gamma(\beta_{21})}{\Gamma(\alpha_{21})\Gamma(\beta_{21}-\alpha_{21})} \left[ \int_{C_2} {t_2}^{\alpha_{21}-1}(1-t_2)^{\beta_{21}-\alpha_{21}-1}\sum_{n=0}^{\infty} \frac{\prod\limits_{i=2}^{p} (\alpha_{2i})_n}{\prod\limits_{j=2}^{q} (\beta_{2j})_n}\cdot\frac{(z_2t_2)^n}{n!} dt_2 \right]e_2\\
         &=\frac{\Gamma(\beta_{1})}{\Gamma(\alpha_{1})\Gamma(\beta_{1}-\alpha_{1})} \left[\int_{C} t^{\alpha_{1}-1}(1-t)^{\beta_{1}-\alpha_{1}-1}\sum_{n=0}^{\infty} \frac{\prod\limits_{i=2}^{p} (\alpha_{i})_n}{\prod\limits_{j=2}^{q} (\beta_{j})_n}\cdot\frac{(Zt)^n}{n!} dt \right]\\
         &=\frac{\Gamma(\beta_{1})}{\Gamma(\alpha_{1})\Gamma(\beta_{1}-\alpha_{1})}\left[\sum_{n=0}^{\infty} \frac{\prod\limits_{i=2}^{p} (\alpha_{i})_n}{\prod\limits_{j=2}^{q} (\beta_{j})_n}\cdot\frac{Z^n}{n!}\int_{C} t^{\alpha_{1}+n-1}(1-t)^{\beta_{1}-\alpha_{1}-1}dt\right].
 \end{align*}
  Hence the proof is completed.
 \end{proof}
 \begin{corollary}\label{co:er}
 The integral representation of bicomplex confluent hypergeometric function is given by
 \begin{align*}
     {}_1F_1(\alpha_1;\beta_1;Z)=\frac{\Gamma(\beta_{1})}{\Gamma(\alpha_{1})\Gamma(\beta_{1}-\alpha_{1})}\int_{C} t^{\alpha_{1}-1}(1-t)^{\beta_{1}-\alpha_{1}-1}e^{(Zt)}dt,
 \end{align*}
 where $Z\in\mathbb{B}_h(0,1),\; t=t_1e_1+t_2e_2\in\mathbb{BC}$  and $C(t)=\left(C_1(t_1),C_2(t_2)\right)$ be a curve in $\mathbb{BC}$ with $0\leq t_1,t_2\leq 1$, provided the conditions $\Re(b_{11}-a_{11})>|\Im(b_{21}-a_{21})|$ and $\Re(a_{11})>|\Im(a_{21})|$ hold.
  \end{corollary}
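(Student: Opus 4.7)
The plan is to specialize Theorem \ref{th:ru} to the case $p=q=1$ and then identify the resulting power series with the bicomplex exponential. When $p=q=1$, both index ranges $i=2,\ldots,p$ and $j=2,\ldots,q$ are empty, so the products $\prod_{i=2}^{p}(\alpha_i)_n$ and $\prod_{j=2}^{q}(\beta_j)_n$ collapse to the multiplicative identity $1$. Theorem \ref{th:ru} then delivers, under the stated hypotheses on the real and imaginary parts of $a_{11}, a_{21}, b_{11}, b_{21}$, the identity
\begin{align*}
{}_1F_1(\alpha_1;\beta_1;Z) = \frac{\Gamma(\beta_1)}{\Gamma(\alpha_1)\Gamma(\beta_1-\alpha_1)}\sum_{n=0}^{\infty}\frac{Z^n}{n!}\int_{C} t^{\alpha_1+n-1}(1-t)^{\beta_1-\alpha_1-1}\,dt.
\end{align*}

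Next I would factor $t^{\alpha_1-1}(1-t)^{\beta_1-\alpha_1-1}$ out of the integrand, interchange summation and integration, and recognize the resulting inner sum $\sum_{n\ge 0}(Zt)^n/n!$ as the bicomplex exponential $e^{Zt}$ (which in the idempotent representation equals $e^{z_1 t_1}e_1 + e^{z_2 t_2}e_2$). This rewrites the right-hand side as the advertised integral $\int_C t^{\alpha_1-1}(1-t)^{\beta_1-\alpha_1-1}e^{Zt}\,dt$, up to the Gamma prefactor.

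The only analytic point that requires justification is the interchange of sum and integral. To handle this, I would pass through the idempotent decomposition $Z=z_1e_1+z_2e_2$, $t=t_1e_1+t_2e_2$, reducing the bicomplex line integral along $C=(C_1,C_2)$ to the pair of classical complex integrals over $C_1, C_2\subset [0,1]$. On each component the partial sums of $\sum(z_j t_j)^n/n!$ are bounded uniformly in $t_j\in[0,1]$ by $e^{|z_j|}$, so the Weierstrass M-test supplies uniform convergence on the compact path, and standard Lebesgue / Riemann theory permits the swap. Reassembling the two components via $e_1$ and $e_2$ then yields the stated formula.

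The substantive work is already done in Theorem \ref{th:ru}; the main obstacle here is purely bookkeeping, namely keeping the empty-product convention explicit and confirming that the hypotheses $\Re(b_{11}-a_{11})>|\Im(b_{21}-a_{21})|$ and $\Re(a_{11})>|\Im(a_{21})|$ translate exactly, via the idempotent decomposition, to the convergence conditions $\Re(\beta_{11}-\alpha_{11})>0$, $\Re(\beta_{21}-\alpha_{21})>0$, $\Re(\alpha_{11})>0$, $\Re(\alpha_{21})>0$ needed on each projected factor.
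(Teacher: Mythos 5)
Your proposal is correct and follows essentially the same route as the paper: specialize Theorem \ref{th:ru} to $p=q=1$ (with the empty products equal to $1$), interchange the sum and the contour integral, and identify $\sum_{n\ge 0}(Zt)^n/n!$ with the bicomplex exponential $e^{Zt}$. The only difference is that you explicitly justify the sum--integral interchange via the idempotent decomposition and uniform convergence on the compact paths, a step the paper performs without comment.
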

 \begin{proof}
      Setting $p=q=1$ in Theorem \ref{th:ru}, we obtain integral representation of the bicomplex confluent hypergeometric function as follows :
    \begin{align*}
        {}_1F_1(\alpha_1;\beta_1;Z)&=\frac{\Gamma(\beta_{1})}{\Gamma(\alpha_{1})\Gamma(\beta_{1}-\alpha_{1})}\left[\sum_{n=0}^{\infty}\frac{Z^n}{n!}\int_{C} t^{\alpha_{1}+n-1}(1-t)^{\beta_{1}-\alpha_{1}-1}dt\right]\\
        &=\frac{\Gamma(\beta_{1})}{\Gamma(\alpha_{1})\Gamma(\beta_{1}-\alpha_{1})}\int_{C} t^{\alpha_{1}-1}(1-t)^{\beta_{1}-\alpha_{1}-1}\sum_{n=0}^\infty\frac{(Zt)^n}{n!}dt\\
       &=\frac{\Gamma(\beta_{1})}{\Gamma(\alpha_{1})\Gamma(\beta_{1}-\alpha_{1})}\int_{C} t^{\alpha_{1}-1}(1-t)^{\beta_{1}-\alpha_{1}-1}e^{(Zt)}dt.
        \end{align*}
 \end{proof}
 \begin{example}
     The integral representation of $\frac{1}{Z^2}(e^Z-1-Z)$ is given by
     \begin{align*}
          \frac{1}{Z^2}\left(e^Z-1-Z\right)=\frac{1}{2}{}_1F_1(1;3;Z)=\int_{C} (1-t)e^{(Zt)}dt,
     \end{align*}
    where $t=t_1e_1+t_2e_2\in\mathbb{BC}$ and  $C(t)=\left(C_1(t_1),C_2(t_2)\right)$ be a curve in $\mathbb{BC}$ with $0\leq t_1,t_2\leq 1$.
 \end{example}
\begin{corollary}\label{co:qs}
 The integral representation of bicomplex hypergeometric function is
 \begin{align*}
     {}_2F_1(\alpha_1,\alpha_2;\beta_1;Z)=\frac{\Gamma(\beta_{1})}{\Gamma(\alpha_{1})\Gamma(\beta_{1}-\alpha_{1})}\left[\int_{C} t^{\alpha_{1}-1}(1-t)^{\beta_{1}-\alpha_{1}-1}(1-Zt)^{-\alpha_2}dt\right],
 \end{align*}
 where  $Z\in\mathbb{B}_h(0,1),\; t=t_1e_1+t_2e_2\in \mathbb{BC}$ and $C(t)=\left(C_1(t_1),C_2(t_2)\right)$ be a curve in $\mathbb{BC}$ with $0\leq t_1,t_2\leq 1$ , provided the conditions $\Re(b_{11}-a_{11})>|\Im(b_{21}-a_{21})|$ and $\Re(a_{11})>|\Im(a_{21})|$.
 \end{corollary}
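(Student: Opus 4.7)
The plan is to derive this as a direct specialization of Theorem \ref{th:ru} with $p=2$ and $q=1$, mirroring the way Corollary \ref{co:er} is obtained. First I would substitute $p=2,\, q=1$ into the conclusion of Theorem \ref{th:ru}, which leaves only $\alpha_2$ in the numerator product and no $\beta_j$ in the denominator product, yielding
\begin{equation*}
{}_2F_1(\alpha_1,\alpha_2;\beta_1;Z)=\frac{\Gamma(\beta_{1})}{\Gamma(\alpha_{1})\Gamma(\beta_{1}-\alpha_{1})}\sum_{n=0}^{\infty}\frac{(\alpha_2)_n}{n!}\,Z^{n}\int_{C} t^{\alpha_{1}+n-1}(1-t)^{\beta_{1}-\alpha_{1}-1}\,dt.
\end{equation*}

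Next I would push $Z^{n}$ inside the integral, combine it with $t^{n}$, and interchange the summation and the integration along $C$. The interchange is legitimate because on the idempotent components $C_{1}$ and $C_{2}$ (each a curve in the respective unit interval), both defining series converge absolutely and uniformly in $t_{1},t_{2}$ on compact subsets: this follows from the hypothesis $Z\in \mathbb{B}_{h}(0,1)$, which gives $|z_{1}|<1$ and $|z_{2}|<1$, together with $|t_{1}|,|t_{2}|\leq 1$. Thus the component integrals reduce to standard classical computations, and they re-assemble via the idempotent representation \eqref{eq:ab} into a single integral along $C$.

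Then I would identify the inner series as the bicomplex binomial expansion. Using the idempotent splitting,
\begin{equation*}
\sum_{n=0}^{\infty}\frac{(\alpha_{2})_{n}}{n!}(Zt)^{n}
=\sum_{n=0}^{\infty}\frac{(\alpha_{21})_{n}}{n!}(z_{1}t_{1})^{n}\,e_{1}+\sum_{n=0}^{\infty}\frac{(\alpha_{22})_{n}}{n!}(z_{2}t_{2})^{n}\,e_{2}
=(1-z_{1}t_{1})^{-\alpha_{21}}e_{1}+(1-z_{2}t_{2})^{-\alpha_{22}}e_{2},
\end{equation*}
which by the idempotent representation of bicomplex powers equals $(1-Zt)^{-\alpha_{2}}$. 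Substituting this into the integral produces exactly the claimed formula.

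The only non-routine step is the last identification: one must be careful that the bicomplex power $(1-Zt)^{-\alpha_{2}}$ is consistently defined via its idempotent components, and that the classical binomial series converges on each component disk. Both facts rest on the assumption $Z\in \mathbb{B}_{h}(0,1)$, which ensures $|z_{j}t_{j}|<1$ on $C_{j}$ for $j=1,2$, so no branch or convergence difficulty arises. With that, Fubini on each component and the idempotent recombination complete the proof.
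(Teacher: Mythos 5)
Your proposal is correct and follows essentially the same route as the paper: specialize Theorem \ref{th:ru} to $p=2$, $q=1$, interchange the sum and the integral, and identify $\sum_{n\ge 0}\frac{(\alpha_2)_n}{n!}(Zt)^n$ with $(1-Zt)^{-\alpha_2}$ via the idempotent decomposition (the paper phrases this last step through the bicomplex binomial theorem and Gamma-function ratios, whereas you apply the classical binomial series componentwise, which amounts to the same computation). Note only the minor indexing slip: in the paper's convention $\alpha_2=\alpha_{12}e_1+\alpha_{22}e_2$, so the $e_1$-component should be $\alpha_{12}$ rather than $\alpha_{21}$.
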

 \begin{proof}
     Setting $p=2$ and $q=1$ in Theorem \ref{th:ru}, we get
      \begin{align*}
          {}_2F_1(\alpha_1,\alpha_2;\beta_1;Z)&=\frac{\Gamma(\beta_{1})}{\Gamma(\alpha_{1})\Gamma(\beta_{1}-\alpha_{1})}\left[\sum_{n=0}^{\infty}\frac{(\alpha_2)_nZ^n}{n!}\int_{C} t^{\alpha_{1}+n-1}(1-t)^{\beta_{1}-\alpha_{1}-1}dt\right]\\
          &=\frac{\Gamma(\beta_{1})}{\Gamma(\alpha_{1})\Gamma(\beta_{1}-\alpha_{1})}\left[\int_{C} t^{\alpha_{1}-1}(1-t)^{\beta_{1}-\alpha_{1}-1}\sum_{n=0}^{\infty}\frac{(\alpha_2)_n(Zt)^n}{n!}dt\right]\\
          &=\frac{\Gamma(\beta_{1})}{\Gamma(\alpha_{1})\Gamma(\beta_{1}-\alpha_{1})}\left[\int_{C} t^{\alpha_{1}-1}(1-t)^{\beta_{1}-\alpha_{1}-1}\sum_{n=0}^{\infty}\frac{(\alpha_2)_n(Zt)^n}{n!}dt\right].
      \end{align*}
     From \cite[page 142]{gamma}, we have
       $$(1+w)^a=\sum_{n=0}^{\infty}\frac{\Gamma(a+1)}{\Gamma(a-n+1)}\frac{w^n}{n!},$$ where $a,w\in\mathbb{BC}$. Now, using binomial theorem for bicomplex arguments and idempotent representation of a bicomplex series, we get
      \begin{align*}
          &\sum_{n=0}^{\infty}\frac{(\alpha_2)_n(Zt)^n}{n!}\\&=\sum_{n=0}^{\infty}\frac{(\alpha_{12})_n(z_1t_1)^n}{n!}e_1+\sum_{n=0}^{\infty}\frac{(\alpha_{22})_n(z_2t_2)^n}{n!}e_2\\
          &=\sum_{n=0}^{\infty}\frac{\alpha_{12}(\alpha_{12}+1)\ldots(\alpha_{12}+n-1)(z_1t_1)^n}{n!}e_1+\sum_{n=0}^{\infty}\frac{\alpha_{22}(\alpha_{22}+1)\ldots (\alpha_{22}+n-1)(z_2t_2)^n}{n!}e_2\\
          &=\sum_{n=0}^{\infty}\frac{(-\alpha_{12})(-\alpha_{12}-1)\ldots(-\alpha_{12}-n+1)(-z_1t_1)^n}{n!}e_1\\&\quad+\sum_{n=0}^{\infty}\frac{(-\alpha_{22})(-\alpha_{22}-1)\ldots (-\alpha_{22}-n+1)(-z_2t_2)^n}{n!}e_2\\
          &=\sum_{n=0}^{\infty}\frac{\Gamma(-\alpha_{12}+1)}{\Gamma(-\alpha_{12}-n+1)} \frac{(-z_1t_1)^n}{n!}e_1+\sum_{n=0}^{\infty}\frac{\Gamma(-\alpha_{22}+1)}{\Gamma(-\alpha_{22}-n+1)} \frac{(-z_2t_2)^n}{n!}e_2\\
          &=\sum_{n=0}^{\infty}\frac{\Gamma(-\alpha_{2}+1)}{\Gamma(-\alpha_{2}-n+1)} \frac{(-Zt)^n}{n!}=(1-Zt)^{-\alpha_2}.
      \end{align*}
      Hence, the integral representation of the bicomplex hypergeometric function is
      \begin{align*}
           {}_2F_1(\alpha_1,\alpha_2;\beta_1;Z)=\frac{\Gamma(\beta_{1})}{\Gamma(\alpha_{1})\Gamma(\beta_{1}-\alpha_{1})}\left[\int_{C} t^{\alpha_{1}-1}(1-t)^{\beta_{1}-\alpha_{1}-1}(1-Zt)^{-\alpha_2}dt\right].
      \end{align*}
  \end{proof}
  \begin{example}
      The integral representation of $\frac{1}{(1-Z)^2}$ is given by
      \begin{align*}
           \frac{1}{(1-Z)^2}={}_2F_1(1,2;1;Z)=\left[\int_{C}(1-Zt)^{-2}dt\right],
      \end{align*}
      where $Z\in\mathbb{B}_h(0,1),\; t=t_1e_1+t_2e_2\in \mathbb{BC}$ and $C$ be a curve in $\mathbb{BC}$ whose parametric representation is $C(t)=\left(C_1(t_1),C_2(t_2)\right)$ with $0\leq t_1,t_2\leq 1$.
  \end{example}
  \begin{corollary}
  If  $\Re(b_{11})>\Re(a_{11})>0$ and $|z|<1$, then confluent hypergeometric function can be expressed as follows :
  \begin{align*}
      {}_1F_1(a_{11};b_{11};z)=\frac{\Gamma(b_{11})}{\Gamma(a_{11})\Gamma(b_{11}-a_{11})}\left[\int_{0}^1 u^{a_{11}-1}(1-u)^{b_{11}-a_{11}-1}e^{(zu)}du\right].
  \end{align*}
  \end{corollary}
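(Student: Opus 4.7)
The plan is to derive this corollary as the complex specialization of Corollary \ref{co:er} (the bicomplex integral representation of ${}_1F_1$). The idea is that every ordinary complex number may be regarded as a bicomplex number whose $i_2$-part vanishes, and in the idempotent representation such a number has identical $e_1$- and $e_2$-components, so the bicomplex formula collapses to its classical counterpart.

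First I would embed the hypotheses of the corollary into the bicomplex framework by choosing $\alpha_1 = a_{11} + i_2 \cdot 0$ and $\beta_1 = b_{11} + i_2 \cdot 0$, so that in the notation of Corollary \ref{co:er} one has $\alpha_{11} = \alpha_{21} = a_{11}$, $\beta_{11} = \beta_{21} = b_{11}$, and $a_{21} = b_{21} = 0$. With this choice the hyperbolic-analytic hypotheses $\Re(b_{11}-a_{11}) > |\Im(b_{21}-a_{21})|$ and $\Re(a_{11}) > |\Im(a_{21})|$ collapse to the classical Euler conditions $\Re(b_{11}) > \Re(a_{11}) > 0$, which are exactly the assumptions stated. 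Similarly, taking $Z = z \in \mathbb{C}(i_1) \subset \mathbb{BC}$ gives $z_1 = z_2 = z$, and the assumption $|z| < 1$ ensures $Z \in \mathbb{B}_h(0,1)$.

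Next I would specialize the curve $C(t) = (C_1(t_1), C_2(t_2))$ to the diagonal choice $C_1(t_1) = t_1$ and $C_2(t_2) = t_2$ on $[0,1]$. Under this choice, the bicomplex integrand $t^{\alpha_1 - 1}(1-t)^{\beta_1 - \alpha_1 - 1} e^{Zt}$ has identical $e_1$- and $e_2$-components, so by the idempotent definition of bicomplex integration used throughout Section 3 the integral over $C$ reduces to a single classical integral on $[0,1]$ in both coordinates. Combined with the identity $\Gamma(\beta_1) = \Gamma(b_{11}) e_1 + \Gamma(b_{11}) e_2 = \Gamma(b_{11})$ from \eqref{eq:g} (and analogous identities for $\Gamma(\alpha_1)$ and $\Gamma(\beta_1 - \alpha_1)$), Corollary \ref{co:er} immediately yields the stated formula.

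The only genuine (and minor) issue is justifying that the bicomplex integration along the product curve $C_1 \times C_2$ truly reduces to ordinary real-variable integration on $[0,1]$ when both component curves coincide with the unit interval; once that is observed via the idempotent decomposition, the rest is a direct substitution and no new estimates are required.
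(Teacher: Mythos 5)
Your proposal is correct and follows essentially the same route as the paper: the authors likewise set $a_{21}=b_{21}=0$ and $Z=z\in\mathbb{C}$ in Corollary \ref{co:er}, split the curve integral into its $e_1$- and $e_2$-components over $[0,1]$, observe that the two component integrals coincide, and recombine via $e_1+e_2=1$. Your additional remarks on how the hyperbolic hypotheses collapse to the classical Euler conditions $\Re(b_{11})>\Re(a_{11})>0$ make explicit what the paper leaves implicit, but the argument is the same.
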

  \begin{proof}
       Putting $a_{21}=b_{21}=0$ and $Z=z\in\mathbb{C}$  in Corollary \ref{co:er}, we have
     \begin{align*}
         &{}_1F_1(a_{11};b_{11};z)\\&=\frac{\Gamma(b_{11})}{\Gamma(a_{11})\Gamma(b_{11}-a_{11})}\int_{C} t^{a_{11}-1}(1-t)^{b_{11}-a_{11}-1}e^{(zt)}dt\\
         &=\frac{\Gamma(b_{11})}{\Gamma(a_{11})\Gamma(b_{11}-a_{11})}\left[\int_{0}^1 t_1^{a_{11}-1}(1-t)^{b_{11}-a_{11}-1}e^{(zt_1)}dt_1e_1+\int_{0}^1 t_2^{a_{11}-1}(1-t_2)^{b_{11}-a_{11}-1}e^{(zt_2)}dt_2e_2\right]\\
         &=\frac{\Gamma(b_{11})}{\Gamma(a_{11})\Gamma(b_{11}-a_{11})}\left[\int_{0}^1 u^{a_{11}-1}(1-u)^{b_{11}-a_{11}-1}e^{(zu)}du(e_1+e_2)\right]\\
         &=\frac{\Gamma(b_{11})}{\Gamma(a_{11})\Gamma(b_{11}-a_{11})}\left[\int_{0}^1 u^{a_{11}-1}(1-u)^{b_{11}-a_{11}-1}e^{(zu)}du\right].
     \end{align*}
\end{proof}
 \begin{corollary}
 If  $\Re(b_{11})>\Re(a_{11})>0$ and $|z|<1$, the integral representation of confluent hypergeometric function is
 \begin{align*}
     {}_2F_1(a_{11},a_{12};b_{11};z)=\frac{\Gamma(b_{11})}{\Gamma(a_{11})\Gamma(b_{11}-a_{11})}\left[\int_{0}^1 \frac{u^{a_{11}-1}(1-u)^{b_{11}-a_{11}-1}}{(1-zu)^{a_{21}}}du\right].
 \end{align*}
 \end{corollary}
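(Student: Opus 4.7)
The plan is to reduce the statement to an immediate specialization of Corollary \ref{co:qs} (the bicomplex $_{2}F_{1}$ Euler integral), by the same device used in the preceding corollary for $_{1}F_{1}$: namely, letting all parameters and the variable be ordinary complex numbers embedded in $\mathbb{BC}$. Concretely, I would set $a_{21}=a_{22}=b_{21}=0$ in the notation $\alpha_{i}=a_{1i}+i_{2}a_{2i}$, $\beta_{j}=b_{1j}+i_{2}b_{2j}$, and take $Z=z\in\mathbb{C}\subset\mathbb{BC}$ with $|z|<1$. Under this specialization the bicomplex hypotheses $\Re(b_{11}-a_{11})>|\Im(b_{21}-a_{21})|$ and $\Re(a_{11})>|\Im(a_{21})|$ collapse to $\Re(b_{11}-a_{11})>0$ and $\Re(a_{11})>0$, which together are the stated assumption $\Re(b_{11})>\Re(a_{11})>0$.

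Next I would unpack the bicomplex contour integral $\int_{C} t^{\alpha_{1}-1}(1-t)^{\beta_{1}-\alpha_{1}-1}(1-Zt)^{-\alpha_{2}}\,dt$ of Corollary \ref{co:qs} via the idempotent representation $C(t)=(C_{1}(t_{1}),C_{2}(t_{2}))$ with $0\leq t_{1},t_{2}\leq 1$. The natural choice is $C_{1}=C_{2}=[0,1]\subset\mathbb{R}$, which is legitimate because the integrand is holomorphic on this segment when $|z|<1$. Since all parameters $\alpha_{1},\alpha_{2},\beta_{1}$ and the variable $Z$ project to the same complex numbers on both idempotent components, the two scalar integrals arising from the $e_{1}$- and $e_{2}$-pieces are \emph{identical} copies of the classical Euler integral; applying $e_{1}+e_{2}=1$ then collapses them into the single real integral on the right-hand side, with dummy variable renamed to $u$.

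The argument is essentially notational, mirroring the proof of the previous corollary, and no genuine obstacle arises. The only minor point worth checking is that the curves $C_{1},C_{2}$ may indeed be taken on the real segment $[0,1]$, which is immediate from the absence of singularities of the integrand on $(0,1)$ under $|z|<1$ and from the integrability at the endpoints guaranteed by $\Re(a_{11})>0$ and $\Re(b_{11}-a_{11})>0$.
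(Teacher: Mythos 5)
Your proposal is correct and follows essentially the same route as the paper: specialize Corollary \ref{co:qs} by zeroing the $i_2$-components of the parameters and taking $Z=z\in\mathbb{C}$, then observe that the two idempotent component integrals coincide and collapse via $e_1+e_2=1$ to the classical Euler integral. One remark: your specialization $a_{21}=a_{22}=b_{21}=0$ is the internally consistent one (it makes $\alpha_1=a_{11}$, $\alpha_2=a_{12}$, $\beta_1=b_{11}$ and yields the exponent $-\alpha_2=-a_{12}$ in the integrand), whereas the paper's own proof writes the substitution as $a_{12}=a_{22}=b_{21}=0$ and its statement displays $(1-zu)^{a_{21}}$ in the denominator --- apparent typos that your version silently corrects.
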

 \begin{proof}
     Putting $a_{12}=a_{22}=b_{21}=0$ and $Z=z\in \mathbb{C}$ in Corollary \ref{co:qs}, we get
     \begin{align*}
          &{}_2F_1(a_{11},a_{12};b_{11};z)\\&=\frac{\Gamma(b_{11})}{\Gamma(a_{11})\Gamma(b_{11}-a_{11})}\int_{C} t^{a_{11}-1}(1-t)^{b_{11}-a_{11}-1}(1-zt)^{-a_{21}}dt\\
          &=\frac{\Gamma(b_{11})}{\Gamma(a_{11})\Gamma(b_{11}-a_{11})}\left[\int_{0}^1 \frac{t_1^{a_{11}-1}(1-t_1)^{b_{11}-a_{11}-1}}{(1-zt_1)^{a_{21}}}dte_1
          +\int_{0}^1 \frac{t_2^{a_{11}-1}(1-t_2)^{b_{11}-a_{11}-1}}{(1-zt_2)^{a_{21}}}dte_2\right]\\
          &=\frac{\Gamma(b_{11})}{\Gamma(a_{11})\Gamma(b_{11}-a_{11})}\left[\int_{0}^1 \frac{u^{a_{11}-1}(1-u)^{b_{11}-a_{11}-1}}{(1-zu)^{a_{21}}}du(e_1+e_2)\right]\\
          &=\frac{\Gamma(b_{11})}{\Gamma(a_{11})\Gamma(b_{11}-a_{11})}\left[\int_{0}^1 \frac{u^{a_{11}-1}(1-u)^{b_{11}-a_{11}-1}}{(1-zu)^{a_{21}}}du\right],
     \end{align*}
     which completes the proof.
\end{proof}
  \begin{theo}\label{th:mn}
      If $p\leq q,\;Z\in\mathbb{B}_h(0,1)$ and $v=v_1+j v_2=v_{11}e_1+v_{12}e_2\in\mathbb{BC}$  with $\Re(v_1)>|\Im(v_2)|$, the integral representation of the bicomplex generalized hypergeometric function is given by
      \begin{align*}
          {}_{p+1} F_{q}\left[\begin{matrix}&v, &a_1,\ldots, &a_{p};\\&b_1,&b_2,\ldots,&b_
         {q};&\end{matrix}Z\right]
         = \frac{1}{\Gamma(v)}\left[ \sum_{n=0}^{\infty}\frac{\prod\limits_{i=1}^{p} (\alpha_{i})_n}{\prod\limits_{j=1}^{q} (\beta_{j})_n}\cdot\frac{Z^n}{n!} \int_{C}e^{-t}t^{v+n-1} dt \right].
      \end{align*}
       where, $\alpha_i=a_{1i}+i_2a_{2i}=\alpha_{1i}e_1+\alpha_{2i}e_2,\; \beta_j=b_{1j}+i_2b_{2j}=\beta_{1j}e_1+\beta_{2j}e_2,\;t=t_1e_1+t_2e_2 \in{\mathbb{BC}}$ for $i=1,2,\ldots, p$ and $j=1,2,\ldots,q$ with $\beta_{1j}$ and $\beta_{2j}$ are neither zero nor a negative integer and $C(t)$ be a curve in $\mathbb{BC}$ whose parametric representation is $C(t)=(C_1(t_1),C_2(t)$ with $0\leq t_1,t_2\leq \infty$.
  \end{theo}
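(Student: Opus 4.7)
The plan is to follow exactly the same three-step template that was used in the proof of Theorem \ref{th:ru}: split the bicomplex ${}_{p+1}F_q$ via its idempotent decomposition \eqref{eq:ab}, invoke the classical integral representation (Theorem \ref{th:c1}) on each $\mathbb{C}(i_1)$-slice, and then reassemble the two scalar pieces into a single bicomplex expression using the idempotent identities \eqref{eq:g}, \eqref{eq:fa} and \eqref{eq:fb}.

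First, I would translate the analytic hypothesis into the idempotent setting. Writing $v=v_1+i_2v_2$ with $v_1,v_2\in\mathbb{C}(i_1)$, the idempotent components are $v_{11}=v_1-i_1v_2$ and $v_{12}=v_1+i_1v_2$, so that $\Re(v_{11})=\Re(v_1)+\Im(v_2)$ and $\Re(v_{12})=\Re(v_1)-\Im(v_2)$. The hypothesis $\Re(v_1)>|\Im(v_2)|$ therefore yields $\Re(v_{11})>0$ and $\Re(v_{12})>0$ simultaneously, which, combined with $p\le q$, is precisely the hypothesis needed to apply Theorem \ref{th:c1} in each component.

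Next, I would apply \eqref{eq:ab} to rewrite ${}_{p+1}F_q[v,a_1,\ldots,a_p;b_1,\ldots,b_q;Z]$ as $(\cdot)e_1+(\cdot)e_2$, and then invoke Theorem \ref{th:c1} on each scalar piece. This produces two classical integrals on $[0,\infty)$ in variables $t_1$ and $t_2$ respectively, each containing a ${}_pF_q$ with complex argument $z_k t_k$. I would then expand the inner ${}_pF_q$ as its defining power series and interchange sum and integral. The interchange is legitimate because, by part (a) of the preceding convergence theorem, ${}_pF_q$ with $p\le q$ is entire in its argument, and the exponential weight $e^{-t_k}$ supplies an integrable majorant on $[0,\infty)$ for every partial sum of the series.

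Finally, I would assemble the two scalar formulas into the desired bicomplex identity using: $\Gamma(v)=\Gamma(v_{11})e_1+\Gamma(v_{12})e_2$ from \eqref{eq:g}, the Pochhammer factorizations $(\alpha_i)_n=(\alpha_{1i})_ne_1+(\alpha_{2i})_ne_2$ and $(\beta_j)_n=(\beta_{1j})_ne_1+(\beta_{2j})_ne_2$ from \eqref{eq:fa}--\eqref{eq:fb}, together with the elementary idempotent identities $Z^n=z_1^ne_1+z_2^ne_2$, $t^{v+n-1}=t_1^{v_{11}+n-1}e_1+t_2^{v_{12}+n-1}e_2$, $e^{-t}=e^{-t_1}e_1+e^{-t_2}e_2$, and $e_1^2=e_1,\; e_2^2=e_2,\; e_1e_2=0$. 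Interpreting $\int_C \cdot\,dt$ as $\int_{C_1}\cdot\,dt_1\, e_1+\int_{C_2}\cdot\,dt_2\, e_2$ on the product path $C=(C_1,C_2)$ with $0\le t_1,t_2<\infty$, the two pieces merge into the single bicomplex formula stated in the theorem. The only non-routine step is the Fubini/uniform-convergence justification for the sum-integral swap, but this is immediate from the entireness of ${}_pF_q$ on each slice and the Gamma-type exponential decay of the integrand; everything else is pure idempotent bookkeeping.
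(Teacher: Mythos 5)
Your proposal follows essentially the same route as the paper's own proof: idempotent splitting via \eqref{eq:ab}, componentwise application of Theorem \ref{th:c1}, series expansion of the inner ${}_pF_q$, and reassembly into a single bicomplex curve integral. The extra details you supply (translating $\Re(v_1)>|\Im(v_2)|$ into positivity of $\Re(v_{11})$ and $\Re(v_{12})$, and justifying the sum--integral interchange) are correct and in fact make the argument more complete than the paper's, which performs these steps implicitly.
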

  \begin{proof}
      Using Theorem \ref{th:c1} and \eqref{eq:ab}, we have
      \begin{align*}
          &{}_{p+1} F_{q}\left[\begin{matrix}&v, &a_1,\ldots, &a_{p};\\&b_1,&b_2,\ldots,&b_
         {q};&\end{matrix}Z\right]\\&={}_{p+1} F_{q}\left[\begin{matrix}&v_{11}, &a_{11},\ldots, &a_{1p};\\&b_{11},&b_{12},\ldots,&b_
         {1q};&\end{matrix}z_1\right]e_1+{}_{p+1} F_{q}\left[\begin{matrix}&v_{12}, &a_{11},\ldots, &a_{1p};\\&b_{11},&b_{12},\ldots,&b_
         {1q};&\end{matrix}z_2\right]e_2\\
         &=\frac{1}{\Gamma(v_{11})} \left[ \int_{0}^{\infty}e^{-t_1}t_1^{v_{11}-1}{}_{p} F_{q}\left[\begin{matrix}&a_{11},\ldots, &a_{1p};\\&b_{11},\ldots,&b_{1q};&\end{matrix}(z_1t_1)\right] dt_1 \right]e_1\\
         &\quad+\frac{1}{\Gamma(v_{12})} \left[ \int_{0}^{\infty}e^{-t_2}t_2^{v_{12}-1}{}_{p} F_{q}\left[\begin{matrix}&a_{22},\ldots, &a_{2p};\\&b_{21},\ldots,&b_{2q};&\end{matrix}(z_2t_2)\right] dt_2 \right]e_2\\
         &=\frac{1}{\Gamma(v_{11})} \left[ \int_{C_1}e^{-t_1}t_1^{v_{11}-1}\sum_{n=0}^{\infty}\frac{\prod\limits_{i=1}^{p} (\alpha_{1i})_n}{\prod\limits_{j=1}^{q} (\beta_{1j})_n}\cdot\frac{(z_1t_1)^n}{n!} dt_1 \right]e_1\\
         &\quad+\frac{1}{\Gamma(v_{12})} \left[ \int_{C_2}e^{-t_2}t_2^{v_{12}-1}\sum_{n=0}^{\infty}\frac{\prod\limits_{i=1}^{p} (\alpha_{2i})_n}{\prod\limits_{j=1}^{q} (\beta_{2j})_n}\cdot\frac{(z_2t_2)^n}{n!} dt_2\right]e_2\\
         &=\frac{1}{\Gamma(v)} \left[ \int_{C}e^{-t}t^{v-1}\sum_{n=0}^{\infty}\frac{\prod\limits_{i=1}^{p} (\alpha_{i})_n}{\prod\limits_{j=1}^{q} (\beta_{j})_n}\cdot\frac{(Zt)^n}{n!} dt \right]\\
         &=\frac{1}{\Gamma(v)}\left[ \sum_{n=0}^{\infty}\frac{\prod\limits_{i=1}^{p} (\alpha_{i})_n}{\prod\limits_{j=1}^{q} (\beta_{j})_n}\cdot\frac{Z^n}{n!} \int_{C}e^{-t}t^{v+n-1} dt \right].
      \end{align*}
       Hence, the proof is completed.
  \end{proof}

  \begin{corollary}
     Setting $p=q=0$ in Theorem \ref{th:mn}, we obtain the integral representation of ${}_1F_{0}(v;.;Z)$ as follows
      \begin{align*}
           {}_1F_{0}(v;.;Z)=\frac{1}{\Gamma(v)}\int_{C}e^{(Z-1)t}t^{v-1}dt,
      \end{align*}
        where $Z\in\mathbb{B}_h(0,1)$, $v=v_1+j v_2,\; t=t_1e_1+t_2e_2\in\mathbb{BC}$  with satisfies the inequality $\Re(v_1)>|\Im(v_2)|$ and  $C(t)=(C_1(t_1),C_2(t_2))$ be a curve in $\mathbb{BC}$ with $0\leq t_1,t_2\leq \infty$.
  \end{corollary}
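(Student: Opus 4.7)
The plan is to specialise Theorem \ref{th:mn} to the degenerate case $p=q=0$, in which every product $\prod_{i=1}^{p}(\alpha_i)_n$ and $\prod_{j=1}^{q}(\beta_j)_n$ reduces to the empty product $1$. Substituting these values into the statement of Theorem \ref{th:mn} immediately yields
\begin{align*}
{}_1F_0(v;.;Z) = \frac{1}{\Gamma(v)}\sum_{n=0}^{\infty}\frac{Z^n}{n!}\int_{C}e^{-t}t^{v+n-1}\,dt,
\end{align*}
which is the natural starting point.

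Next I would reverse the order of summation and integration, which is legitimate because $Z\in\mathbb{B}_h(0,1)$ and the bicomplex exponential series converges hyperbolically absolutely; componentwise this reduces to the classical interchange on each idempotent slice, using the hypothesis $\Re(v_1)>|\Im(v_2)|$ to guarantee convergence of the $\Gamma$-type integrals $\int_{C_k}e^{-t_k}t_k^{v_{1k}+n-1}dt_k$ and to justify absolute convergence. This step produces
\begin{align*}
{}_1F_0(v;.;Z)=\frac{1}{\Gamma(v)}\int_{C}e^{-t}t^{v-1}\sum_{n=0}^{\infty}\frac{(Zt)^n}{n!}\,dt.
\end{align*}

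The final step is to recognise the inner sum as the bicomplex exponential $e^{Zt}$, which follows from the idempotent splitting $e^{Zt}=e^{z_1t_1}e_1+e^{z_2t_2}e_2$ together with the identity $\sum_{n=0}^{\infty}(z_kt_k)^n/n!=e^{z_kt_k}$ on each component. Combining $e^{-t}\cdot e^{Zt}=e^{(Z-1)t}$ via the multiplicative property of the bicomplex exponential (which again is immediate in the idempotent representation) produces the stated formula.

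The only substantive point is the interchange of sum and integral; all other manipulations are formal and reduce, via the idempotent decomposition, to well-known complex identities. Since the absolute convergence of the integrals $\int_{C_k}e^{-t_k}t_k^{v_{1k}-1}e^{z_kt_k}dt_k$ on each component is guaranteed by $\Re(v_1)>|\Im(v_2)|$ and $|z_k|<1$, I do not anticipate any genuine obstacle, and the corollary should follow from a short, direct computation mirroring the proofs of Corollaries \ref{co:er} and \ref{co:qs}.
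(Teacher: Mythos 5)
Your proposal is correct and follows exactly the route the paper intends: the paper states this corollary without an explicit proof, but your computation (empty products, interchange of sum and integral, recognition of the bicomplex exponential, and the combination $e^{-t}e^{Zt}=e^{(Z-1)t}$) mirrors precisely the proofs the paper gives for the analogous Corollaries \ref{co:er} and \ref{co:qs}. No gaps.
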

 \begin{example}
     The integral representation of $\frac{1}{(1-Z)^3}$ is given by
     \begin{align*}
        \frac{1}{(1-Z)^3}={}_1F_{0}(3;.;Z)=\frac{1}{\Gamma(3)}\int_{C}t^2e^{(Z-1)t}dt,
     \end{align*}
      where $Z\in\mathbb{B}_h(0,1),\; t=t_1e_1+t_2e_2\in\mathbb{BC}$ and $C(t)=(C_1(t_1),C_2(t_2))$ be a curve in $\mathbb{BC}$ with $0\leq t_1,t_2\leq \infty$.
 \end{example}
  \begin{theo}\label{th:8t}
    If $Z\in\mathbb{B}_h(0,1)$, $m=m_{11}e_1+m_{21}e_2,\;n=n_{11}e_1+n_{21},\;u=u_{11}e_1+u_{21}e_2,\;v=v_{11}e_1+v_{21}e_2\in\mathbb{BC}$ with  $\Re(m_{11})>0,\Re(n_{11})>0$ then integral representation of bicomplex generalized hypergeometric is given by
    \begin{align*}
        &\int_C\int_Du^{m-1}v^{n-1}(1-u)^{n}{}_{p} F_{q}\left[\begin{matrix}&\alpha_1, &\alpha_2,\ldots, &\alpha_{p};\\&\beta_1,&\beta_2,\ldots,.&\beta_
         {q};&\end{matrix}(1-u)(1-v)Z\right]dudv\\&=\frac{\Gamma(m)\Gamma(n)}{\Gamma(m+n+1)}{}_{p+1} F_{q+1}\left[\begin{matrix}&\alpha_1,\ldots, &\alpha_{p},&1;\\&\beta_1,\ldots,&\beta_
         {q},&m+n+1;&\end{matrix}Z\right],
    \end{align*}
    where $C=(C_1,C_2)$ and $D=(D_1,D_2)$ are two curves in $\mathbb{BC}$ whose parametric equations are  $C=\left(C(u_{11}),C(u_{21})\right)$ and  $D=\left(D(v_{11}),D(v_{21})\right)$  respectively with $0\leq u_{11},u_{21},v_{11},v_{21}\leq1$.
  \end{theo}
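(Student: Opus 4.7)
The plan is to expand the bicomplex $_pF_q$ as a power series in $(1-u)(1-v)Z$ and integrate term by term, exactly as one would do in the classical setting, but passing through the idempotent decomposition \eqref{eq:ab} so that each step reduces to a pair of ordinary $[0,1]$ integrals. Using the previous theorem, I would write
\begin{align*}
{}_pF_q\!\left[\tfrac{\alpha_1,\ldots,\alpha_p}{\beta_1,\ldots,\beta_q};(1-u)(1-v)Z\right]
=\sum_{k=0}^{\infty}\frac{\prod_{i=1}^{p}(\alpha_i)_k}{\prod_{j=1}^{q}(\beta_j)_k}\frac{Z^{k}}{k!}(1-u)^{k}(1-v)^{k},
\end{align*}
so that after multiplying by $u^{m-1}v^{n-1}(1-u)^{n}$ the integrand factorises as a single series whose $k$-th term is a product of a $u$-integral and a $v$-integral.

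Next, I would swap sum and integral. On each idempotent component the resulting curve integrals collapse to the ordinary Beta integrals
\begin{align*}
\int_{0}^{1}u^{m-1}(1-u)^{n+k}du=\frac{\Gamma(m)\Gamma(n+k+1)}{\Gamma(m+n+k+1)},\qquad
\int_{0}^{1}v^{n-1}(1-v)^{k}dv=\frac{\Gamma(n)\Gamma(k+1)}{\Gamma(n+k+1)},
\end{align*}
for each projection, and the bicomplex form \eqref{eq:g} of the Gamma function lets me recombine them into bicomplex Beta factors. Multiplying these together, the $\Gamma(n+k+1)$ factors cancel and I get
\begin{align*}
\frac{\Gamma(m)\Gamma(n)\,k!}{\Gamma(m+n+k+1)}=\frac{\Gamma(m)\Gamma(n)}{\Gamma(m+n+1)}\cdot\frac{(1)_k}{(m+n+1)_k}\cdot k!.
\end{align*}
Substituting this back and pulling the constant $\Gamma(m)\Gamma(n)/\Gamma(m+n+1)$ outside, the summand becomes $\frac{\prod(\alpha_i)_k\,(1)_k}{\prod(\beta_j)_k\,(m+n+1)_k}\frac{Z^{k}}{k!}$, which is precisely the $_{p+1}F_{q+1}$ on the right-hand side with the extra upper parameter $1$ and the extra lower parameter $m+n+1$.

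The main obstacle I anticipate is the justification of the interchange of the double integral with the infinite sum on the curves $C=(C_1,C_2)$ and $D=(D_1,D_2)$, rather than on straight real intervals. I would handle this by projecting to the two idempotent components (where each integration variable ranges in $[0,1]$), invoking the hyperbolic Weierstrass $M$-test used already in the convergence theorem together with $Z\in\mathbb{B}_h(0,1)$ and $|1-u|,|1-v|\le 2$ on each component to get a geometric majorant, and then Fubini on each classical component. A minor technical point is to check that the Beta integrals make sense for the bicomplex parameters $m,n,m+n+1$: the hypothesis $\Re(m_{11})>0$, $\Re(n_{11})>0$ (which I would read together with the analogous positivity on the $e_2$ side implicit in the statement) guarantees convergence at the endpoints on both projections, so the classical Beta evaluation lifts to the bicomplex identity via \eqref{eq:g} without further work.
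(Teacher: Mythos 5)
Your proposal is correct and follows essentially the same route as the paper: the paper merely packages your term-by-term Beta evaluation as a preliminary identity $\frac{\Gamma(m)\Gamma(n)(1)_k}{\Gamma(m+n+1)(m+n+1)_k}=\int_C\int_D u^{m-1}v^{n-1}(1-u)^{n+k}(1-v)^k\,du\,dv$ (derived componentwise via the idempotent decomposition, exactly as you do) before expanding the series and interchanging sum and integral. One small slip: your displayed Gamma identity carries a spurious factor of $k!$ on the right-hand side --- it should read $\frac{\Gamma(m)\Gamma(n)\,k!}{\Gamma(m+n+k+1)}=\frac{\Gamma(m)\Gamma(n)}{\Gamma(m+n+1)}\cdot\frac{(1)_k}{(m+n+1)_k}$ since $(1)_k=k!$ --- but the summand you then write down is the correct one, so this is only a typo and not a gap.
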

  \begin{proof}
      Let $k$ be a non negative integer,
      \begin{align}\label{eq:2g}
         & \frac{\Gamma(m)\Gamma(n)(1)_k}{\Gamma(m+n+1)(m+n+1)_k}\nonumber\\&=\frac{\Gamma(m_{11})\Gamma(n_{11})(1)_k}{\Gamma(m_{11}+n_{11}+1)(m_{11}+n_{11}+1)_k}e_1+\frac{\Gamma(m_{21})\Gamma(n_{21})(1)_k}{\Gamma(m_{21}+n_{21}+1)(m_{21}+n_{21}+1)_k}e_2\nonumber\\
          &=\frac{\Gamma(m_{11})\Gamma(n_{11})\Gamma(k+1)}{\Gamma(m_{11}+n_{11}+k+1)}e_1+\frac{\Gamma(m_{21})\Gamma(n_{21})\Gamma(k+1)}{\Gamma(m_{21}+n_{21}+k+1)}e_2\nonumber\\
          &=\frac{\Gamma(m_{11})\Gamma(n_{11}+k+1)}{\Gamma(m_{11}+n_{11}+k+1)}\times \frac{\Gamma(n_{11})\Gamma(k+1)}{\Gamma(n_{11}+k+1)}e_1+\frac{\Gamma(m_{21})\Gamma(n_{21}+k+1)}{\Gamma(m_{21}+n_{21}+k+1)}\times \frac{\Gamma(n_{21})\Gamma(k+1)}{\Gamma(n_{21}+k+1)}e_2\nonumber\\
          &=B(m_{11},m_{11}+n_{11}+k+1)\times B(n_{11},k+1)e_1+B(m_{21},m_{21}+n_{21}+k+1)\times B(n_{21},k+1)e_2\nonumber\\
          &=\int_0^1u_{11}^{m_{11}-1}(1-u_{11})^{n_{11}+k}du_{11}\times\int_0^1v_{11}^{n_{11}-1}(1-v_{11})^kdv_{11}e_1\nonumber\\
          &\quad+\int_0^1u_{21}^{m_{21}-1}(1-u_{21})^{n_{21}+k}du_{21}\times\int_0^1v_{21}^{n_{21}-1}(1-v_{21})^kdv_{21}e_2.
        \end{align}
    Let us consider two curves $C=(C_1,C_2)$ and $D=(D_1,D_2)$ in $\mathbb{BC}$, whose parametric equations are $C=\left(C(u_{11}),C(u_{21})\right)$ and  $D=\left(D(v_{11}),D(v_{21})\right)$ respectively, where $0\leq u_{11},u_{21},v_{11},v_{21}\leq1$. Then from the equation \eqref{eq:2g}, we get
        \begin{align}\label{eq:9f}
          & \frac{\Gamma(m)\Gamma(n)(1)_k}{\Gamma(m+n+1)(m+n+1)_k}\nonumber\\ &=\int_C\int_D[(u_{11}e_1+u_{21}e_2)^{m_{11}e_1+m_{21}e_2-1}(v_{11}e_1+v_{21}e_2)^{n_{11}e_1+n_{21}e_2-1}\nonumber\\&\hspace{30pt}(1-u_{11}e_1-u_{21}e_2)^{n_{11}e_1+n_{21}e_2+k}(1-v_{11}e_1-v_{21}e_2)^k](du_{11}e_1+du_{21}e_2)(dv_{11}e_1+dv_{21}e_2)\nonumber\\
          &=\int_C\int_Du^{m-1}v^{n-1}(1-u)^{n+k}(1-v)^kdudv.
      \end{align}
      Now using \eqref{eq:9f}, we have
      \begin{align*}
          &\int_C\int_Du^{m-1}v^{n-1}(1-u)^{n}{}_{p} F_{q}\left[\begin{matrix}&\alpha_1, &\alpha_2,\ldots, &\alpha_{p};\\&\beta_1,&\beta_2,\ldots,&\beta_
         {q};&\end{matrix}(1-u)(1-v)Z\right]dudv\\
         &=\int_C\int_Du^{m-1}v^{n-1}(1-u)^{n}\sum_{k=0}^{\infty}\frac{\prod\limits_{i=1}^{p}(\alpha_i)_k}{\prod\limits_{j=1}^{q}(\beta_j)_k}.\frac{{(1-u)^k(1-v)^kZ}^k}{k!}dudv\\
         &=\sum_{k=0}^{\infty}\frac{\prod\limits_{i=1}^{p}(\alpha_i)_k}{\prod\limits_{j=1}^{q}(\beta_j)_k}.\frac{Z^k}{k!}\int_C\int_Du^{m-1}v^{n-1}(1-u)^{n+k}(1-v)^kdudv\\
         &=\sum_{k=0}^{\infty}\frac{\prod\limits_{i=1}^{p}(\alpha_i)_k}{\prod\limits_{j=1}^{q}(\beta_j)_k}.\frac{Z^k}{k!}\frac{\Gamma(m)\Gamma(n)(1)_k}{\Gamma(m+n+1)(m+n+1)_k}\\
         &=\frac{\Gamma(m)\Gamma(n)}{\Gamma(m+n+1)}{}_{p+1} F_{q+1}\left[\begin{matrix}&\alpha_1,\ldots, &\alpha_{p},&1;\\&\beta_1,\ldots,&\beta_
         {q},&m+n+1;&\end{matrix}Z\right],
      \end{align*}
      which completes the proof of the theorem.
  \end{proof}
  \begin{corollary}
      Setting $p=q=1$ and $\alpha_1=\beta_1$ in Theorem \ref{th:8t}, we get
       \begin{align*}
        &\int_C\int_Du^{m-1}v^{n-1}(1-u)^{n}e^{(1-u)(1-v)Z}dudv=\frac{\Gamma(m)\Gamma(n)}{\Gamma(m+n+1)}{}_{2} F_{2}\left[\begin{matrix}&\alpha_1,&1;\\&\alpha_1,&m+n+1;&\end{matrix}Z\right],
    \end{align*}
    where $Z\in\mathbb{B}_h(0,1)$, $m=m_{11}e_1+m_{21}e_2,n=n_{11}e_1+n_{21},u=u_{11}e_1+u_{21}e_2,v=v_{11}e_1+v_{21}e_2\in\mathbb{BC}$ with satisfies the condition $\Re(m_{11})>0,\Re(n_{11})>0$ and $C=\left(C(u_{11}),C(u_{21})\right)$, $D=\left(D(v_{11}),D(v_{21})\right)$ are two curves in $\mathbb{BC}$ with $0\leq u_{11},u_{21},v_{11},v_{21}\leq1$.
  \end{corollary}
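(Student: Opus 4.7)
The plan is to invoke Theorem \ref{th:8t} with $p=q=1$ and then specialise $\alpha_1=\beta_1$, so that the only substantive step is the reduction of the bicomplex confluent hypergeometric function ${}_1F_1(\alpha_1;\alpha_1;W)$ appearing on the left to the bicomplex exponential $e^W$ with $W=(1-u)(1-v)Z$.

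First I would substitute $p=q=1$ directly into the identity furnished by Theorem \ref{th:8t}, which gives
\begin{align*}
&\int_C\int_D u^{m-1}v^{n-1}(1-u)^{n}\,{}_{1}F_{1}\!\left[\begin{matrix}\alpha_1;\\ \beta_1;\end{matrix}(1-u)(1-v)Z\right]du\,dv\\
&\qquad=\frac{\Gamma(m)\Gamma(n)}{\Gamma(m+n+1)}\,{}_{2}F_{2}\!\left[\begin{matrix}\alpha_1,\,1;\\ \beta_1,\,m+n+1;\end{matrix}Z\right].
\end{align*}
Setting $\alpha_1=\beta_1$ leaves the ${}_{2}F_{2}$ on the right-hand side in exactly the form displayed in the statement, so the entire task collapses to identifying ${}_{1}F_{1}(\alpha_1;\alpha_1;W)$ with $e^{W}$ in $\mathbb{BC}$.

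To secure that identification I would use the idempotent representation of the Pochhammer symbol from \eqref{eq:fa}: $(\alpha_1)_k=(\alpha_{11})_k e_1+(\alpha_{21})_k e_2$. Under the standing hypothesis that the lower parameters avoid non-positive integers in each component, both $(\alpha_{11})_k$ and $(\alpha_{21})_k$ are nonzero complex numbers, so the ratio $(\alpha_1)_k/(\alpha_1)_k$ is well defined in $\mathbb{BC}$ and splits componentwise as
\[
\frac{(\alpha_1)_k}{(\alpha_1)_k}=\frac{(\alpha_{11})_k}{(\alpha_{11})_k}e_1+\frac{(\alpha_{21})_k}{(\alpha_{21})_k}e_2=e_1+e_2=1.
\]
Substituting this into the defining series \eqref{eq:gh} collapses ${}_{1}F_{1}(\alpha_1;\alpha_1;W)$ to $\sum_{k=0}^{\infty}W^{k}/k!$, which via the idempotent representation \eqref{eq:ab} is precisely the bicomplex exponential $e^{W}$.

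The main obstacle is purely one of bookkeeping: one must confirm that the component-wise cancellation above is legitimate, but this follows immediately from the admissibility hypotheses on $\alpha_1=\beta_1$ already required by Theorem \ref{th:8t}. Once ${}_{1}F_{1}(\alpha_1;\alpha_1;(1-u)(1-v)Z)$ is replaced by $e^{(1-u)(1-v)Z}$ inside the double integral, the claimed identity follows at once.
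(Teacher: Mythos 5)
Your proposal is correct and follows exactly the route the paper intends: the paper states this corollary without a separate proof, treating it as the immediate specialization $p=q=1$, $\alpha_1=\beta_1$ of Theorem \ref{th:8t}, and your only substantive addition—the componentwise cancellation $(\alpha_1)_k/(\alpha_1)_k=e_1+e_2=1$ showing ${}_1F_1(\alpha_1;\alpha_1;W)=e^W$ in $\mathbb{BC}$—is precisely the routine detail being suppressed. The admissibility remark (that $\alpha_{11},\alpha_{21}$ avoid non-positive integers because $\beta_1=\alpha_1$ is a lower parameter) correctly justifies the cancellation.
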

\section{Some quadratic transformations}
 In this section, we have constructed some quadratic transforms of the bicomplex generalized hypergeometric function.
 \begin{theo}\label{th}
  If  $Z=z_1 e_1+z_2 e_2\in\mathbb{B}_h(0,1)$, $\alpha_i=\alpha_{1i}e_1+\alpha_{2i}e_2,\;\beta_j=\beta_{1j}e_1+\beta_{2j}e_2 \in{\mathbb{BC}}$ for $i=1,2,\ldots,p$ and $j=1,2,\ldots,q$ with $\beta_{1j}$ and $\beta_{2j}$ are neither zero nor a negative integer, then
 \begin{align*}
   2 \times{}_{2p} F_{2q+1}\left[\begin{matrix}&\frac{\alpha_1}{2},&\frac{\alpha_2}{2},\ldots,&\frac{\alpha_{p}}{2},\frac{\alpha_1+1}{2}, &\frac{\alpha_2+1}{2},\ldots, &\frac{\alpha_{p}+1}{2};\\&\frac{1}{2},&\frac{\beta_1}{2},\ldots,&\frac{\beta_{q}}{2},\frac{\beta_1+1}{2}, &\frac{\beta_2+1}{2},\ldots, &\frac{\beta_{q}+1}{2};&\end{matrix}\frac{Z^2}{(4)^{q+1-p}}\right]&={}_{p} F_{q}\left[\begin{matrix}&\alpha_1, &\alpha_2,\ldots, &\alpha_{p_1};\\&\beta_1,&\beta_2,\ldots,&\beta_{p_2};&\end{matrix}Z\right]\\&\quad+{}_{p} F_{q}\left[\begin{matrix}&\alpha_1, &\alpha_2,\ldots, &\alpha_{p};\\&\beta_1,&\beta_2,\ldots,&\beta_{q};&\end{matrix}-Z\right].
  \end{align*}
\end{theo}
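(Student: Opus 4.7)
The plan is to prove the identity by direct term-by-term manipulation of the bicomplex series \eqref{eq:gh}, exploiting the fact that adding ${}_pF_q[Z]$ and ${}_pF_q[-Z]$ cancels the odd-degree terms and doubles the even-degree ones, after which the classical Legendre-type duplication formulas for the Pochhammer symbol convert the resulting series of even powers into the claimed ${}_{2p}F_{2q+1}$. Since the argument is essentially an identity of formal power series, the bicomplex version should reduce cleanly to the scalar version via the idempotent decomposition established in \eqref{eq:ab}.

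First, starting from \eqref{eq:gh}, I would write
\begin{align*}
{}_pF_q\!\left[\alpha;\beta;Z\right]+{}_pF_q\!\left[\alpha;\beta;-Z\right]
=\sum_{n=0}^{\infty}\frac{\prod_i(\alpha_i)_n}{\prod_j(\beta_j)_n}\cdot\frac{Z^n+(-Z)^n}{n!}
=2\sum_{n=0}^{\infty}\frac{\prod_i(\alpha_i)_{2n}}{\prod_j(\beta_j)_{2n}}\cdot\frac{Z^{2n}}{(2n)!},
\end{align*}
since $-1$ acts as an ordinary scalar on $\mathbb{BC}$ so that $(-Z)^n=(-1)^n Z^n$. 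The rearrangement (dropping the odd terms) is legitimate because Theorem 2.2 guarantees absolute hyperbolic convergence of the original series for $Z\in\mathbb{B}_h(0,1)$.

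Next I would invoke the duplication identities
\[
(\alpha)_{2n}=4^{n}\Big(\tfrac{\alpha}{2}\Big)_n\Big(\tfrac{\alpha+1}{2}\Big)_n,\qquad
(2n)!=4^{n}\,n!\,\Big(\tfrac{1}{2}\Big)_n,
\]
which hold classically for complex arguments. Their extension to bicomplex parameters is immediate: by \eqref{eq:fa} the Pochhammer symbol decomposes as $(\alpha)_n=(\alpha_1)_n e_1+(\alpha_2)_n e_2$, so each component satisfies the classical identity, and summing with $e_1,e_2$ gives the bicomplex version. Substituting these identities into the even-term series collects a factor $4^{(p-q-1)n}$, and combining with $Z^{2n}$ produces $(Z^2/4^{q+1-p})^n$; reading off the resulting series gives exactly
\[
2\times{}_{2p}F_{2q+1}\!\left[\tfrac{\alpha_1}{2},\ldots,\tfrac{\alpha_p}{2},\tfrac{\alpha_1+1}{2},\ldots,\tfrac{\alpha_p+1}{2};\;\tfrac{1}{2},\tfrac{\beta_1}{2},\ldots,\tfrac{\beta_q}{2},\tfrac{\beta_1+1}{2},\ldots,\tfrac{\beta_q+1}{2};\;\tfrac{Z^2}{4^{q+1-p}}\right].
\]

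The main obstacle is not the algebra but a bookkeeping check in the bicomplex setting. I would need to verify that all the new bottom parameters $\tfrac{1}{2},\tfrac{\beta_j}{2},\tfrac{\beta_j+1}{2}$ are admissible, i.e.\ that none of their idempotent components is zero or a negative integer; this follows from the hypothesis on $\beta_{1j},\beta_{2j}$, since halving (or halving-then-shifting) a complex number that is not a non-positive integer cannot produce a non-positive integer. I would also record that the convergence domain is preserved: for $Z\in\mathbb{B}_h(0,1)$ one has $Z^2\in\mathbb{B}_h(0,1)$, and the rescaling by $4^{p-q-1}$ is absorbed into the parameter count of the new hypergeometric series (Theorem 2.2 applied to ${}_{2p}F_{2q+1}$ gives convergence on the appropriate ball). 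Once these routine checks are in place, the identity follows.
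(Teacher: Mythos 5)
Your proposal is correct and follows essentially the same route as the paper's proof: both rest on the even-part cancellation $Z^n+(-Z)^n$, the duplication identities $(\alpha)_{2k}=4^k(\tfrac{\alpha}{2})_k(\tfrac{\alpha+1}{2})_k$ and $(2k)!=4^k k!(\tfrac12)_k$ extended componentwise via the idempotent decomposition \eqref{eq:ab}. The only difference is that you run the chain of equalities from the right-hand side to the left, whereas the paper starts from the ${}_{2p}F_{2q+1}$ series; your added checks on the admissibility of the new denominator parameters are a harmless (and correct) refinement.
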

\begin{proof}\label{eq:10}
For integral $k\geq1$, we have
\begin{align}\label{eq:sn}
    (\alpha_{1i})_{2k}&=(\alpha_{1i})(\alpha_{1i}+1)\ldots(\alpha_{1i}+2k-1)\nonumber\\
    &=(\alpha_{1i})(\alpha_{1i}+2)\ldots(\alpha_{1i}+2k-2)\times(\alpha_{1i}+1)(\alpha_{1i}+3)\ldots(\alpha_{1i}+2k-1)\nonumber\\
    &=2^{2k}\left(\frac{\alpha_{1i}}{2}\right)\left(\frac{\alpha_{1i}}{2}+1\right)\ldots\left(\frac{\alpha_{1i}}{2}+k-1\right)\nonumber\\&\quad\times(\frac{\alpha_{1i}+1}{2})\left(\frac{\alpha_{1i}+1}{2}+1
    \right)\ldots\left(\frac{\alpha_{1i}+1}{2}+k-1\right)\nonumber\\
    &=2^{2k}\left(\frac{\alpha_{1i}}{2}\right)_k\left(\frac{\alpha_{1i}+1}{2}\right)_k.
\end{align}
Similarly, we get
\begin{align}\label{eq:11}
    (\beta_{1j})_{2k}=2^{2k}\left(\frac{\beta_{1j}}{2}\right)_k\left(\frac{\beta_{1j}+1}{2}\right)_k.
\end{align}
Again for $k\in \mathbb{N}$, we have
\begin{align}\label{eq:12}
    (2k)!=2^{2k}k!\left(\frac{1}{2}\right)_k.
\end{align}
Using the series form \eqref{eq:gh} of the bicomplex generalized hypergeometric function, we obtain
\begin{align}\label{eq:13}
   & {}_{2p} F_{2q+1}\left[\begin{matrix}&\frac{\alpha_1}{2},&\frac{\alpha_2}{2} ,\ldots,&\frac{\alpha_{p}}{2},\frac{\alpha_1+1}{2}, &\frac{\alpha_2+1}{2},\ldots, &\frac{\alpha_{p}+1}{2};\\&\frac{1}{2}&\frac{\beta_1}{2},\ldots,&\frac{\beta_{q}}{2},\frac{\beta_1+1}{2}, &\frac{\beta_2+1}{2},\ldots, &\frac{\beta_{q}+1}{2};&\end{matrix}\frac{Z^2}{(4)^{q+1-p}}\right] \nonumber\\
   &=\sum_{k=0}^{\infty}\frac{\prod\limits_{i=1}^{p}\left(\frac{\alpha_i}{2}\right)_k\left(\frac{\alpha_i+1}{2}\right)_k}{\left(\frac{1}{2}\right)_{k}\prod\limits_{j=1}^{q}(\frac{\beta_j}{2})_k(\frac{\beta_j+1}{2})_k}\cdot\frac{\left(\frac{Z^2}{4^{q+1-p}}\right)^k}{k!} \nonumber\\
 &=\sum_{k=0}^{\infty}\frac{\prod\limits_{i=1}^{p}\left[2^{2k}\left(\frac{\alpha_{1i}}{2}\right)_k\left(\frac{\alpha_{1i}+1}{2}\right)_k\right]}{\prod\limits_{j=1}^{q}\left[2^{2k}(\frac{\beta_{1j}}{2})_k(\frac{\beta_{1j}+1}{2})_k\right]}\cdot\frac{\left(z_1^2\right)^k}{2^{2k}\left(\frac{1}{2}\right)_kk!}e_1+\sum_{k=0}^{\infty}\frac{\prod\limits_{i=1}^{p}\left[2^{2k}\left(\frac{\alpha_{2i}}{2}\right)_k\left(\frac{\alpha_{2i}+1}{2}\right)_k\right]}{\prod\limits_{j=1}^{q}\left[2^{2k}(\frac{\beta_{2j}}{2})_k(\frac{\beta_{2j}+1}{2})_k\right]}\cdot\frac{\left(z_2^2\right)^k}{2^{2k}\left(\frac{1}{2}\right)_kk!}e_2.
\end{align}
Using \eqref{eq:sn}, \eqref{eq:11}, \eqref{eq:12} and \eqref{eq:13}, we get
\begin{align*}
    &{}_{2p} F_{2q+1}\left[\begin{matrix}&\frac{\alpha_1}{2},&\frac{\alpha_2}{2} ,\ldots,&\frac{\alpha_{p}}{2},\frac{\alpha_1+1}{2}, &\frac{\alpha_2+1}{2},\ldots, &\frac{\alpha_{p}+1}{2};\\&\frac{1}{2}&\frac{\beta_1}{2},\ldots,&\frac{\beta_{q}}{2},\frac{\beta_1+1}{2}, &\frac{\beta_2+1}{2},\ldots, &\frac{\beta_{q}+1}{2};&\end{matrix}\frac{Z^2}{(4)^{p_2+1-P_{1}}}\right]\\
&=\left[\sum_{k=0}^{\infty}\frac{\prod\limits_{i=1}^{p}\left[(\alpha_{1i})_{2k}\right]}{\prod\limits_{j=1}^{q}\left[(\beta_{1j})_{2k}\right]}\cdot\frac{z_1^{2k}}{2k!}\right]e_1+\left[\sum_{k=0}^{\infty}\frac{\prod\limits_{i=1}^{p}\left[(\alpha_{2i})_{2k}\right]}{\prod\limits_{j=1}^{q}\left[(\beta_{2j})_{2k}\right]}\cdot\frac{z_2^{2k}}{2k!}\right]e_2\\
    &=\frac{1}{2}\left[\sum_{n=0}^{\infty}\frac{\prod\limits_{i=1}^{p}\left[(\alpha_{1i})_{n}\right]}{\prod\limits_{j=1}^{q}\left[(\beta_{1j})_{n}\right]}\cdot\frac{(z_1)^{n}+(-z_1)^n}{n!}\right]e_1+\frac{1}{2}\left[\sum_{n=0}^{\infty}\frac{\prod\limits_{i=1}^{p}\left[(\alpha_{2i})_{n}\right]}{\prod\limits_{j=1}^{q}\left[(\beta_{2j})_{n}\right]}\cdot\frac{(z_2)^{n}+(-z_2)^n}{n!}\right]e_2\\
    &=\frac{1}{2}\left[\sum_{n=0}^{\infty}\frac{\prod\limits_{i=1}^{p}\left[(\alpha_{1i})_{n}\right]}{\prod\limits_{j=1}^{q}\left[(\beta_{1j})_{n}\right]}\cdot\frac{(z_1)^{n}}{n!}e_1+\sum_{n=0}^{\infty}\frac{\prod\limits_{i=1}^{p}\left[(\alpha_{2i})_{n}\right]}{\prod\limits_{j=1}^{q}\left[(\beta_{2j})_{n}\right]}\cdot\frac{(z_2)^{n}}{n!}e_2\right]\\
    &\quad+\frac{1}{2}\left[\sum_{n=0}^{\infty}\frac{\prod\limits_{i=1}^{p}\left[(\alpha_{1i})_{n}\right]}{\prod\limits_{j=1}^{q}\left[(\beta_{1j})_{n}\right]}\cdot\frac{(-z_1)^{n}}{n!}e_1+\sum_{n=0}^{\infty}\frac{\prod\limits_{i=1}^{p}\left[(\alpha_{2i})_{n}\right]}{\prod\limits_{j=1}^{q}\left[(\beta_{2j})_{n}\right]}\cdot\frac{(-z_2)^{n}}{n!}e_2\right]\\
    &=\frac{1}{2}\left[\sum_{n=0}^{\infty}\frac{\prod\limits_{i=1}^{p}\left[(\alpha_i)_{n}\right]}{\prod\limits_{j=1}^{q}\left[(\beta_j)_{n}\right]}\cdot\frac{Z^n}{n!}\right]+\frac{1}{2}\left[\sum_{n=0}^{\infty}\frac{\prod\limits_{i=1}^{p}\left[(\alpha_i)_{n}\right]}{\prod\limits_{j=1}^{q}\left[(\beta_j)_{n}\right]}\cdot\frac{(-Z)^n}{n!}\right]\\
    &=\frac{1}{2}\cdot{}_{p} F_{q}\left[\begin{matrix}&\alpha_1, &\alpha_2,\ldots, &\alpha_{p};\\&\beta_1,&\beta_2,\ldots,&\beta_{q};&\end{matrix}Z\right]+ \frac{1}{2}\cdot{}_{p} F_{q}\left[\begin{matrix}&\alpha_1, &\alpha_2,\ldots, &\alpha_{p};\\&\beta_1,&\beta_2,\ldots,&\beta_{q};&\end{matrix}-Z\right],
\end{align*}
 which completes the proof of the theorem.
\end{proof}
\begin{theo}\label{co}
   If  $Z=z_1 e_1+z_2 e_2\in\mathbb{B}_h(0,1)$, $\alpha_i=\alpha_{1i}e_1+\alpha_{2i}e_2,\; \beta_j=\beta_{1j}e_1+\beta_{2j}e_2 \in{\mathbb{BC}}$ for $i=1,2,\ldots,p$ and $j=1,2,\ldots,q$ with $\beta_{1j}$ and $\beta_{2j}$ are neither zero nor a negative integer, then
\begin{align*}
     &\frac{2Z\prod\limits_{i=1}^{p}(\alpha_i)}{\prod\limits_{j=1}^{q}(\beta_j)} \times {}_{2p} F_{2q+1}\left[\begin{matrix}&\frac{\alpha_1+1}{2},&\frac{\alpha_2+1}{2} ,\ldots,&\frac{\alpha_{p}+1}{2},\frac{\alpha_1+2}{2}, &\frac{\alpha_2+2}{2},\ldots, &\frac{\alpha_{p}+2}{2};\\&\frac{3}{2},&\frac{\beta_1+1}{2},\ldots,&\frac{\beta_{q}+1}{2},\frac{\beta_1+2}{2}, &\frac{\beta_2+2}{2},\ldots, &\frac{\beta_{q}+2}{2};&\end{matrix}\frac{Z^2}{(4)^{q+1-p}}\right]\\&={}_{p} F_{q}\left[\begin{matrix}&\alpha_1, &\alpha_2,\ldots, &\alpha_{p};\\&\beta_1,&\beta_2,\ldots,&\beta_{q};&\end{matrix}Z\right]-{}_{p} F_{q}\left[\begin{matrix}&\alpha_1, &\alpha_2,\ldots, &\alpha_{p};\\&\beta_1,&\beta_2,\ldots,&\beta_{q};&\end{matrix}-Z\right].
\end{align*}
\end{theo}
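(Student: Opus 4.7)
My plan is to mirror the proof of Theorem \ref{th} essentially verbatim, but isolating the odd-index part of the series rather than the even one. As in the previous proof, I would first pass to the idempotent decomposition so that it suffices to verify the identity separately in each component $e_1$ and $e_2$; once both scalar identities hold, recombining them via $f_1(z_1)e_1+f_2(z_2)e_2$ yields the bicomplex statement with $Z=z_1e_1+z_2e_2$ and $Z^2=z_1^2e_1+z_2^2e_2$.

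In each component the core observation is that
\[
\tfrac{1}{2}\bigl({}_pF_q(z)-{}_pF_q(-z)\bigr)
=\sum_{k=0}^{\infty}\frac{\prod_{i=1}^{p}(\alpha_i)_{2k+1}}{\prod_{j=1}^{q}(\beta_j)_{2k+1}}\cdot\frac{z^{2k+1}}{(2k+1)!},
\]
so I only need to rewrite each Pochhammer symbol of odd index and the odd factorial in a form that exposes a ${}_{2p}F_{2q+1}$. For this I will establish (or cite the analogues of \eqref{eq:sn}, \eqref{eq:11}, \eqref{eq:12}) the three auxiliary identities
\[
(\alpha)_{2k+1}=\alpha\,(\alpha+1)_{2k}=\alpha\cdot 2^{2k}\Bigl(\tfrac{\alpha+1}{2}\Bigr)_k\Bigl(\tfrac{\alpha+2}{2}\Bigr)_k,\qquad
(\beta)_{2k+1}=\beta\cdot 2^{2k}\Bigl(\tfrac{\beta+1}{2}\Bigr)_k\Bigl(\tfrac{\beta+2}{2}\Bigr)_k,
\]
\[
(2k+1)!=2^{2k}\,k!\,\Bigl(\tfrac{3}{2}\Bigr)_k,
\]
the last one following from $(\tfrac{3}{2})_k=(2k+1)(\tfrac{1}{2})_k$ combined with \eqref{eq:12}.

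Substituting these into the displayed series in each component, the leading factors of $\alpha_i$ and $\beta_j$ collect into the prefactor $\frac{2z\prod\alpha_i}{\prod\beta_j}$, a single $z$ is pulled out of $z^{2k+1}$, and the remaining powers of $2$ tally to $2^{2pk}\cdot 2^{-2qk}\cdot 2^{-2k}=4^{(p-q-1)k}$, which combines with $z^{2k}$ to give precisely $\bigl(Z^2/4^{q+1-p}\bigr)^k$. The surviving Pochhammer ratios become exactly
\[
\frac{\prod_{i=1}^{p}\bigl(\tfrac{\alpha_i+1}{2}\bigr)_k\bigl(\tfrac{\alpha_i+2}{2}\bigr)_k}{\bigl(\tfrac{3}{2}\bigr)_k\prod_{j=1}^{q}\bigl(\tfrac{\beta_j+1}{2}\bigr)_k\bigl(\tfrac{\beta_j+2}{2}\bigr)_k},
\]
matching the parameter pattern of the ${}_{2p}F_{2q+1}$ on the left-hand side.

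The only real obstacle is bookkeeping of the powers of two: one must carefully track $4^{pk}$ from the $2p$ upper Pochhammer factors against $4^{qk}$ from the lower ones, the extra $4^k$ from $(2k+1)!$, and the $2$ from the prefactor, to confirm that the argument of the ${}_{2p}F_{2q+1}$ is exactly $Z^2/4^{q+1-p}$. Since the corresponding component-wise series for $z_1$ and $z_2$ are identical in form, reassembling via $e_1$ and $e_2$ (using $(\alpha_{1i}e_1+\alpha_{2i}e_2)^{-1}$-type manipulations as in \eqref{eq:fa}--\eqref{eq:fb}) gives the bicomplex identity as stated, completing the proof.
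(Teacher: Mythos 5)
Your proposal is correct and follows essentially the same route as the paper: the same three auxiliary identities for $(\alpha)_{2k+1}$, $(\beta)_{2k+1}$ and $(2k+1)!$, the same idempotent decomposition, and the same power-of-two tally yielding the argument $Z^2/4^{q+1-p}$. The only difference is that you read the computation from the right-hand side (isolating the odd part of the series) while the paper expands the ${}_{2p}F_{2q+1}$ on the left and works forward, which is the same calculation in reverse.
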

\begin{proof}
   Let $k$ be a non-negative integer. Then
   \begin{align*}
       (\alpha_{1i})_{2k+1}&=(\alpha_{1i})(\alpha_{1i}+1)\ldots(\alpha_{1i}+2k)\nonumber\\
       &=2^{2k}(\alpha_{1i})\left[\left(\frac{\alpha_{1i}+2}{2}\right)\left(\frac{\alpha_{1i}+2}{2}+1\right)\ldots\left(\frac{\alpha_{1i}+2}{2}+k-1\right)\right]\nonumber\\
       & \quad\times\left[\left(\frac{\alpha_{1i}+1}{2}\right)\left(\frac{\alpha_{1i}+1}{2}+1\right)\ldots\left(\frac{\alpha_{1i}+1}{2}+k-1\right)\right]\nonumber\\
       &=2^{2k}(\alpha_{1i})\left(\frac{\alpha_{1i}+2}{2}\right)_k\left(\frac{\alpha_{1i}+1}{2}\right)_k
       \end{align*}
        Similarly, we get
       \begin{align*}
            (\alpha_{2i})_{2k+1}&=2^{2k}(\alpha_{2i})\left(\frac{\alpha_{2i}+2}{2}\right)_k\left(\frac{\alpha_{2i}+1}{2}\right)_k.
            \end{align*}
       Now, using idempotent representation of $(\alpha_i)_{2k+1}$, we have
       \begin{align}\label{eq:4r}
           (\alpha_i)_{2k+1}&=(\alpha_{1i})_{2k+1}e_1+(\alpha_{2i})_{2k+1}e_2\nonumber\\
           &=2^{2k}(\alpha_{1i})\left(\frac{\alpha_{1i}+2}{2}\right)_k\left(\frac{\alpha_{1i}+1}{2}\right)_ke_1+2^{2k}(\alpha_{1i})\left(\frac{\alpha_{1i}+2}{2}\right)_k\left(\frac{\alpha_{1i}+1}{2}\right)_ke_2\nonumber\\
           &=2^{2k}(\alpha_{i})\left(\frac{\alpha_{i}+2}{2}\right)_k\left(\frac{\alpha_{i}+1}{2}\right)_k.
       \end{align}
       Similarly, we obtain
       \begin{align}\label{eq:5t}
           (\beta_j)_{2k+1}=2^{2k}(\beta_{j})\left(\frac{\beta_{j}+2}{2}\right)_k\left(\frac{\beta_{j}+1}{2}\right)_k.
       \end{align}
       Again for $k\in\mathbb{N}$, we have
       \begin{align}
           (2k+1)!=2^{2k}k!\left(\frac{3}{2}\right)_k.
       \end{align}
       \begin{align*}
           &\frac{2Z\prod\limits_{i=1}^{p}(\alpha_i)}{\prod\limits_{j=1}^{q}(\beta_j)} \times {}_{2p} F_{2q+1}\left[\begin{matrix}&\frac{\alpha_1+1}{2},&\frac{\alpha_2+1}{2} ,\ldots,&\frac{\alpha_{p}+1}{2},\frac{\alpha_1+2}{2}, &\frac{\alpha_2+2}{2},\ldots, &\frac{\alpha_{p}+2}{2};\\&\frac{3}{2},&\frac{\beta_1+1}{2}, \ldots,&\frac{\beta_{q}+1}{2},\frac{\beta_1+2}{2}, &\frac{\beta_2+2}{2},\ldots, &\frac{\beta_{q}+2}{2};&\end{matrix}\frac{Z^2}{(4)^{q+1-p}}\right]\\
           &=2Z\sum_{k=0}^\infty\frac{\prod\limits_{i=1}^{p}(\alpha_i)\left(\frac{\alpha_i+1}{2}\right)_k\left(\frac{\alpha_i+2}{2}\right)_k}{(\frac{3}{2})_k\prod\limits_{j=1}^{q}(\beta_j)\left(\frac{\beta_i+1}{2}\right)_k\left(\frac{\beta_i+2}{2}\right)_k}\frac{Z^{2k}}{k!(4)^{kq+k-kp}}\\
           &=2Z\sum_{k=0}^\infty\frac{\prod\limits_{i=1}^{p}\left[2^{2k}(\alpha_i)\left(\frac{\alpha_i+1}{2}\right)_k\left(\frac{\alpha_i+2}{2}\right)_k\right]}{\prod\limits_{j=1}^{q}\left[2^{2k}(\beta_j)\left(\frac{\beta_i+1}{2}\right)_k\left(\frac{\beta_i+2}{2}\right)_k\right]}\frac{Z^{2k}}{2^{2k}k!(\frac{3}{2})_k}\\
           &=2\sum_{k=0}^\infty\frac{\prod\limits_{i=1}^{p}(\alpha_i)_{2k+1}}{\prod\limits_{j=1}^{q}(\beta_j)_{2k+1}}\frac{Z^{2k+1}}{(2k+1)!}\\
           &=\sum_{k=0}^\infty\frac{\prod\limits_{i=1}^{p}(\alpha_i)_k}{\prod\limits_{j=1}^{q}(\beta_j)_k}\left[\frac{Z^k-(-Z)^k}{k!}\right]\\
           &=\sum_{k=0}^\infty\frac{\prod\limits_{i=1}^{p}(\alpha_i)_k}{\prod\limits_{j=1}^{q}(\beta_j)_k}\left[\frac{Z^k}{k!}\right]-\sum_{k=0}^\infty\frac{\prod\limits_{i=1}^{p}(\alpha_i)_k}{\prod\limits_{j=1}^{q}(\beta_j)_k}\left[\frac{(-Z)^k}{k!}\right]\\
           &={}_{p} F_{q}\left[\begin{matrix}&\alpha_1, &\alpha_2,\ldots, &\alpha_{p};\\&\beta_1,&\beta_2,\ldots,&\beta_{q};&\end{matrix}Z\right]-{}_{p} F_{q}\left[\begin{matrix}&\alpha_1, &\alpha_2,\ldots, &\alpha_{p};\\&\beta_1,&\beta_2,\ldots,&\beta_{q};&\end{matrix}-Z\right].
       \end{align*}
       Hence the proof is completed.
 \end{proof}

\begin{theo}
    Let $n$ be a non-negative integer and $\alpha_1=\alpha_{11}e_1+\alpha_{21}e_2,\;\alpha_2=\alpha_{12}e_1+\alpha_{22}e_2$ and $\beta_{1}=\beta_{11}e_1+\beta_{21}e_2 \in \mathbb{BC}$ are all independent of $n$, then
    \begin{align*}
        {}_{3} F_{2}\left[\begin{matrix}& &\quad\quad-n,\alpha_1, \alpha_2;\\&\beta_1,& 1-\beta_1+\alpha_1+\alpha_2-n;&\end{matrix}1\right]=\frac{(\beta_1-\alpha_1)_n(\beta_1-\alpha_2)_n}{(\beta_1)_n(\beta_1-\alpha_1-\alpha_2)_n}.
    \end{align*}
\end{theo}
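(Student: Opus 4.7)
The plan is to reduce the bicomplex identity to the classical Saalschütz theorem (Theorem \ref{th:s}) componentwise via the idempotent decomposition. First I would expand the left-hand side using the idempotent representation \eqref{eq:ab}. Writing $\alpha_1 = \alpha_{11}e_1 + \alpha_{21}e_2$, $\alpha_2 = \alpha_{12}e_1 + \alpha_{22}e_2$, $\beta_1 = \beta_{11}e_1 + \beta_{21}e_2$, and using $1 = e_1 + e_2$ and $n = ne_1 + ne_2$, the upper parameter $-n$ splits as $-ne_1 - ne_2$ and the lower parameter splits as
\[
1-\beta_1+\alpha_1+\alpha_2-n = (1-\beta_{11}+\alpha_{11}+\alpha_{12}-n)e_1 + (1-\beta_{21}+\alpha_{21}+\alpha_{22}-n)e_2.
\]
Therefore \eqref{eq:ab} yields
\[
{}_{3}F_{2}\!\left[\begin{matrix}-n,\alpha_1,\alpha_2;\\ \beta_1, 1-\beta_1+\alpha_1+\alpha_2-n;\end{matrix}1\right]
= A_1 e_1 + A_2 e_2,
\]
where $A_r$ is the classical complex ${}_3F_2$ in the parameters $(-n,\alpha_{1r},\alpha_{2r};\beta_{r1},1-\beta_{r1}+\alpha_{r1}+\alpha_{r2}-n)$ evaluated at $1$, for $r=1,2$ (with indices adjusted to the $(\alpha_{1r},\alpha_{2r})$ notation used above).

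Next I would apply classical Saalschütz (Theorem \ref{th:s}) separately in each idempotent component; this is legitimate because each component is an ordinary complex hypergeometric series, the summand is truncated at $n$ (since $(-n)_k=0$ for $k>n$), and the parameters satisfy the Saalschützian balance by construction. This gives
\[
A_r = \frac{(\beta_{r1}-\alpha_{r1})_n(\beta_{r1}-\alpha_{r2})_n}{(\beta_{r1})_n(\beta_{r1}-\alpha_{r1}-\alpha_{r2})_n}, \qquad r = 1,2.
\]

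Finally I would recombine the two components into a single bicomplex expression using the idempotent rule for Pochhammer symbols \eqref{eq:fa}, together with the fact that quotients in idempotent coordinates act componentwise (since $e_1 e_2 = 0$, we have $(u_1 e_1 + u_2 e_2)/(v_1 e_1 + v_2 e_2) = (u_1/v_1)e_1 + (u_2/v_2)e_2$ whenever $v_1,v_2\neq 0$). Combining numerator and denominator by these rules converts $A_1 e_1 + A_2 e_2$ into $(\beta_1-\alpha_1)_n (\beta_1-\alpha_2)_n / [(\beta_1)_n (\beta_1-\alpha_1-\alpha_2)_n]$, which is the claimed formula.

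The computation is essentially routine once the componentwise reduction is set up; the only subtle point is checking that all Pochhammer denominators appearing in the components are nonzero so that the componentwise division is valid, and that the balance condition needed by Saalschütz holds separately in each component, which is immediate because the scalars $1$ and $n$ split trivially across the idempotent basis. I do not anticipate any genuine obstacle beyond bookkeeping.
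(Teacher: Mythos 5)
Your proposal is correct and follows essentially the same route as the paper: the authors likewise split the bicomplex ${}_3F_2$ into its two idempotent components via \eqref{eq:ab}, apply the classical Saalschutz theorem (Theorem \ref{th:s}) to each complex component, and recombine the resulting Pochhammer quotients using the componentwise arithmetic of the idempotent basis. Your additional remarks on the nonvanishing of the denominators and the trivial splitting of the scalars $1$ and $n$ are sensible bookkeeping that the paper leaves implicit.
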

\begin{proof}
    Using idempotent representation \eqref{eq:ab} and Theorem\ref{th:s}, we obtain
    \begin{align*}
        &{}_{3} F_{2}\left[\begin{matrix}& &\quad\quad-n,\alpha_1, \alpha_2;\\&\beta_1,& 1-\beta_1+\alpha_1+\alpha_2-n;&\end{matrix}1\right]\\&={}_{3} F_{2}\left[\begin{matrix}& &\quad\quad-n,\alpha_{11}, \alpha_{12};\\&\beta_{11},& 1-\beta_{11}+\alpha_{11}+\alpha_{12}-n;&\end{matrix}1\right]e_1+{}_{3} F_{2}\left[\begin{matrix}& &\quad\quad-n,\alpha_{21}, \alpha_{22};\\&\beta_{21},& 1-\beta_{21}+\alpha_{21}+\alpha_{22}-n;&\end{matrix}1\right]e_2\\
        &=\frac{(\beta_{11}-\alpha_{11})_n(\beta_{11}-\alpha_{12})_n}{(\beta_{11})_n(\beta_{11}-\alpha_{11}-\alpha_{12})_n}e_1+\frac{(\beta_{21}-\alpha_{21})_n(\beta_{21}-\alpha_{22})_n}{(\beta_{21})_n(\beta_{21}-\alpha_{21}-\alpha_{22})_n}e_2\\
        &=\frac{(\beta_1-\alpha_1)_n(\beta_1-\alpha_2)_n}{(\beta_1)_n(\beta_1-\alpha_1-\alpha_2)_n},
    \end{align*}
which completes the proof of the theorem.
\end{proof}
\section{Differential relation and analyticity of the bicomplex generalized hypergeometric function}
In this section, we derive differential relationship and check analyticity of the bicomplex generalized hypergeometric function.
\begin{theo}
  Let  $Z=z_1 e_1+z_2 e_2\in\mathbb{B}_h(0,1)$, $\alpha_i=\alpha_{1i}e_1+\alpha_{2i}e_2,\; \beta_j=\beta_{1j}e_1+\beta_{2j}e_2 \in{\mathbb{BC}}$ for $i=1,2,\ldots,p$ and $j=1,2,\ldots,q$ with $\beta_{1j}$ and $\beta_{2j}$ are neither zero nor a negative integer, then
\begin{align*}
    \frac{d^k}{dZ^k}\left({}_{p} F_{q}\left[\begin{matrix}&\alpha_1, &\alpha_2,\ldots, &\alpha_{p};\\&\beta_1,&\beta_2,\ldots,&\beta_{q};&\end{matrix}Z\right]\right)=\frac{\prod\limits_{i=1}^{p}(\alpha_i)_k}{\prod\limits_{j=1}^{q}(\beta_j)_k}{}_{p} F_{q}\left[\begin{matrix}&\alpha_1+k, &\alpha_2+k,\ldots, &\alpha_{p}+k;\\&\beta_1+k,&\beta_2+k,\ldots,&\beta_{q}+k;&\end{matrix}Z\right].
\end{align*}
\end{theo}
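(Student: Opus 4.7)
The plan is to reduce the bicomplex differential identity to the classical complex one by invoking the idempotent representation. Since both sides of the asserted identity are manifestly linear combinations of $e_1$ and $e_2$, proving the identity in each idempotent component is enough.

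First I would establish that for any $\mathbb{BC}$-holomorphic function of the form $F(Z)=F_1(z_1)e_1+F_2(z_2)e_2$, the bicomplex derivative decomposes as $\frac{dF}{dZ}=F_1'(z_1)e_1+F_2'(z_2)e_2$. This follows because, taking the limit $Z\to Z_0$ with $Z-Z_0\notin\mathbb{O}_2$, one may restrict to increments of the form $h_1 e_1+h_2 e_2$ with $h_1,h_2\neq 0$; the quotient then splits into the two complex difference quotients, one in each component. Iterating gives $\frac{d^k F}{dZ^k}=F_1^{(k)}(z_1)e_1+F_2^{(k)}(z_2)e_2$.

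Next, applying the well-definedness theorem (equation (2.2)) to write
\[
{}_pF_q\!\left[\begin{matrix}\alpha_1,\ldots,\alpha_p;\\ \beta_1,\ldots,\beta_q;\end{matrix}Z\right]={}_pF_q\!\left[\begin{matrix}\alpha_{11},\ldots,\alpha_{1p};\\ \beta_{11},\ldots,\beta_{1q};\end{matrix}z_1\right]e_1+{}_pF_q\!\left[\begin{matrix}\alpha_{21},\ldots,\alpha_{2p};\\ \beta_{21},\ldots,\beta_{2q};\end{matrix}z_2\right]e_2,
\]
I would invoke on each complex component the classical identity
\[
\frac{d^k}{dz^k}\,{}_pF_q\!\left[\begin{matrix}a_1,\ldots,a_p;\\ b_1,\ldots,b_q;\end{matrix}z\right]=\frac{\prod_{i=1}^p (a_i)_k}{\prod_{j=1}^q (b_j)_k}\,{}_pF_q\!\left[\begin{matrix}a_1+k,\ldots,a_p+k;\\ b_1+k,\ldots,b_q+k;\end{matrix}z\right],
\]
which is obtained by term-by-term differentiation of the power series (justified inside the disk of convergence) using $\frac{d^k}{dz^k}z^n=\frac{n!}{(n-k)!}z^{n-k}$ for $n\geq k$ and the Pochhammer shift $(a)_n=(a)_k(a+k)_{n-k}$.

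Finally, I would reassemble: the two scalar coefficients $\prod(\alpha_{1i})_k/\prod(\beta_{1j})_k$ and $\prod(\alpha_{2i})_k/\prod(\beta_{2j})_k$ combine, via equations (2.6) and (2.7), to the bicomplex ratio $\prod(\alpha_i)_k/\prod(\beta_j)_k$, and the two shifted hypergeometric components combine back, via (2.2) applied with parameters shifted by $k$, to the bicomplex $ {}_pF_q$ with parameters $\alpha_i+k$, $\beta_j+k$. There is no real obstacle here; the only point that requires care is the justification that the bicomplex derivative acts componentwise on the idempotent decomposition, which is the first step above.
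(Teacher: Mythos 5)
Your proposal is correct and follows essentially the same route as the paper: both reduce the statement to the two idempotent components via the decomposition \eqref{eq:ab} and the splitting $(\alpha_i)_k=(\alpha_{1i})_k e_1+(\alpha_{2i})_k e_2$, the only organizational difference being that you invoke the classical complex identity for general $k$ directly, whereas the paper re-derives it by computing the $k=1$ case term by term and then inducting on $k$. Your explicit justification that the bicomplex derivative acts componentwise on $F_1(z_1)e_1+F_2(z_2)e_2$ is a point the paper uses silently, so including it is a small improvement rather than a deviation.
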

\begin{proof}
    Using power series representation \eqref{eq:gh} of the generalized bicomplex hypergeometric function, we have
\begin{align*}
    & \frac{d}{dZ} \left( {}_{p} F_{q}\left[\begin{matrix}&\alpha_1, &\alpha_2,.... &\alpha_{p};\\&\beta_1,&\beta_2,.....&\beta_{q};&\end{matrix}Z\right]\right)\\
     &=\frac{d}{dz_1}\left(\sum_{n=0}^{\infty} \frac{\prod\limits_{i=1}^{p} (\alpha_{1i})_n}{\prod\limits_{j=1}^{q} (\beta_{1j})_n}.\frac{z_1^n}{n!}\right)e_1+\frac{d}{dz_2}\left(\sum_{n=0}^{\infty} \frac{\prod\limits_{i=1}^{p}(\alpha_{2i})_n}{\prod\limits_{j=1}^{q}(\beta_{2j})_n}.\frac{z_2^n}{n!}\right) e_2\nonumber\\
     &=\left(\sum_{n=1}^{\infty}\frac{\prod\limits_{i=1}^{p}(\alpha_{1i})_n}{\prod\limits_{j=1}^{q}(\beta_{1j})_n}.\frac{nz_1^{n-1}}{n!} \right)e_1+\left(\sum_{n=1}^{\infty}\frac{\prod\limits_{i=1}^{p}(\alpha_{2i})_n}{\prod\limits_{j=1}^{q}(\beta_{2j})_n}.\frac{nz_2^{n-1}}{n!}\right) e_2\nonumber\\
     &=\left(\sum_{n=0}^{\infty}\frac{\prod\limits_{i=1}^{p}(\alpha_{1i})_{n+1}}{\prod\limits_{j=1}^{q}(\beta_{1j})_{n+1}}.\frac{z_1^{n}}{n!} \right)e_1+\left(\sum_{n=0}^{\infty}\frac{\prod\limits_{i=1}^{p}(\alpha_{2i})_{n+1}}{\prod\limits_{j=1}^{q}(\beta_{2j})_{n+1}}.\frac{z_2^{n}}{n!}\right) e_2\\
    &=\frac{\prod\limits_{i=1}^{p}(\alpha_{1i})}{\prod\limits_{j=1}^{q}(\beta_{1j})}{}\left(\sum_{n=0}^{\infty}\frac{\prod\limits_{i=1}^{p}(\alpha_{1i}+1)_{n}}{\prod\limits_{j=1}^{q}(\beta_{1j}+1)_{n}}.\frac{z_1^{n}}{n!} \right)e_1+\frac{\prod\limits_{i=1}^{p}(\alpha_{2i})}{\prod\limits_{j=1}^{q}(\beta_{2j})}\left(\sum_{n=0}^{\infty}\frac{\prod\limits_{i=1}^{p}(\alpha_{2i}+1)_{n}}{\prod\limits_{j=1}^{q}(\beta_{2j}+1)_{n}}.\frac{z_2^{n}}{n!}\right)e_2\\
    &=\left(\frac{\prod\limits_{i=1}^{p}(\alpha_{1i})}{\prod\limits_{j=1}^{q}(\beta_{1j})}e_1+\frac{\prod\limits_{i=1}^{p}(\alpha_{2i})}{\prod\limits_{j=1}^{q}(\beta_{2j})}e_2\right)\left(\sum_{n=0}^{\infty}\frac{\prod\limits_{i=1}^{p}(\alpha_{1i}+1)_{n}}{\prod\limits_{j=1}^{q}(\beta_{1j}+1)_{n}}.\frac{z_1^{n}}{n!}e_1+\sum_{n=0}^{\infty}\frac{\prod\limits_{i=1}^{p}(\alpha_{2i}+1)_{n}}{\prod\limits_{j=1}^{q}(\beta_{2j}+1)_{n}}.\frac{z_2^{n}}{n!}e_2\right)\\
    &=\frac{\prod\limits_{i=1}^{p}(\alpha_i)}{\prod\limits_{j=1}^{q}(\beta_j)}{}\left({}_{p} F_{q}\left[\begin{matrix}&\alpha_1+1, &\alpha_2+1,\ldots, &\alpha_{p}+1;\\&\beta_1+1,&\beta_2+1,\ldots,&\beta_{q}+1;&\end{matrix}Z\right]\right).
    \end{align*}
Therefore, the given statement is true for $k=1$. Assume that given statement is true for $k=m$ and it implies that
\begin{align*}
    &\frac{d^m}{dZ^m} \left( {}_{p} F_{q}\left[\begin{matrix}&\alpha_1, &\alpha_2,\ldots, &\alpha_{p};\\&\beta_1,&\beta_2,\ldots,&\beta_{q};&\end{matrix}Z\right]\right)=\frac{\prod\limits_{i=1}^{p}(\alpha_i)_m}{\prod\limits_{j=1}^{q}(\beta_j)_m}{}\left({}_{p} F_{q}\left[\begin{matrix}&\alpha_1+m, &\alpha_2+m,\ldots, &\alpha_{p}+m;\\&\beta_1+m,&\beta_2+m,\ldots,&\beta_{q}+m;&\end{matrix}Z\right]\right).\\
\end{align*}
Now,
\begin{align*}
     &\frac{d^{m+1}}{dZ^{m+1}} \left( {}_{p} F_{q}\left[\begin{matrix}&\alpha_1, &\alpha_2,\ldots, &\alpha_{p};\\&\beta_1,&\beta_2,\ldots,&\beta_{q};&\end{matrix}Z\right]\right)
     =\frac{d}{dZ}\left[ \frac{d^m}{dZ^m} \left( {}_{p} F_{q}\left[\begin{matrix}&\alpha_1, &\alpha_2,\ldots, &\alpha_{p};\\&\beta_1,&\beta_2,\ldots,&\beta_{q};&\end{matrix}Z\right]\right) \right]\\
     &=\frac{\prod\limits_{i=1}^{p}(\alpha_{i})_m}{\prod\limits_{j=1}^{q}(\beta_j)_m}\left[\frac{d}{dz_1}\left(\sum_{n=0}^{\infty} \frac{\prod\limits_{i=1}^{p} (\alpha_{1i}+m)_n}{\prod\limits_{j=1}^{q} (\beta_{1j}+m)_n}.\frac{z_1^n}{n!}\right)e_1+\frac{d}{dz_2}\left(\sum_{n=0}^{\infty} \frac{\prod\limits_{i=1}^{p}(\alpha_{2i}+m)_n}{\prod\limits_{j=1}^{q}(\beta_{2j}+m)_n}.\frac{z_2^n}{n!}\right) e_2\right]\\
     &=\frac{\prod\limits_{i=1}^{p}(\alpha_i)_m}{\prod\limits_{j=1}^{q}(\beta_j)_m}\left[\left(\sum_{n=0}^{\infty} \frac{\prod\limits_{i=1}^{p} (\alpha_{1i}+m)_{n+1}}{\prod\limits_{j=1}^{q} (\beta_{1j}+m)_{n+1}}.\frac{z_1^n}{n!}\right)e_1+\left(\sum_{n=0}^{\infty} \frac{\prod\limits_{i=1}^{p}(\alpha_{2i}+m)_{n+1}}{\prod\limits_{j=1}^{q}(\beta_{2j}+m)_{n+1}}.\frac{z_2^n}{n!}\right) e_2\right]\\
     &=\frac{\prod\limits_{i=1}^{p}(\alpha_i)_m(\alpha_i+m)}{\prod\limits_{j=1}^{q}(\beta_j)_m(\beta_j+m)}\left(\sum_{n=0}^{\infty}\frac{\prod\limits_{i=1}^{p}(\alpha_{1i}+m+1)_{n}}{\prod\limits_{j=1}^{q}(\beta_{1j}+m+1)_{n}}.\frac{z_1^{n}}{n!} e_1+\sum_{n=0}^{\infty}\frac{\prod\limits_{i=1}^{p}(\alpha_{2i}+m+1)_{n}}{\prod\limits_{j=1}^{q}(\beta_{2j}+m+1)_{n}}.\frac{z_2^{n}}{n!}e_2\right)\\
     &=\frac{\prod\limits_{i=1}^{p}(\alpha_i)_{m+1}}{\prod\limits_{j=1}^{q}(\beta_j)_{m+1}}\left({}_{p} F_{q}\left[\begin{matrix}&\alpha_1+m+1, &\alpha_2+m+1,\ldots, &\alpha_{p}+m+1;\\&\beta_1+m+1,&\beta_2+m+1,\ldots,&\beta_{q}+m+1;&\end{matrix}Z\right]\right),
\end{align*}
which shows that the given statement is true for $k=m+1$. Hence, by Principle of Mathematical Induction, the given statement is true for all $k\in{N}$.
\end{proof}
\begin{theo}
    If  $Z=z_1 e_1+z_2 e_2\in\mathbb{B}_h(0,1)$, $\alpha_i=\alpha_{1i}e_1+\alpha_{2i}e_2,$ $ \beta_j=\beta_{1j}e_1+\beta_{2j}e_2 \in{\mathbb{BC}}$ for $i=1,2,\ldots,p$ and $j=1,2,\ldots,q$ with $\beta_{1j}$ and $\beta_{2j}$ are neither zero nor a negative integer the bicomplex generalized hypergeometric function ${}_{p} F_{q}\left[\begin{matrix}&\alpha_1, &\alpha_2,\ldots, &\alpha_{p};\\&\beta_1,&\beta_2,\ldots,&\beta_{q};&\end{matrix}Z\right]$ is $\mathbb{BC}-holomorphic$ in $\alpha_i$ and $\beta_j$ except when  at least one of $\beta_{1j}$ and $\beta_{2j}$ are either zero or negative integer.
\end{theo}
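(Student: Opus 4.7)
The plan is to leverage the idempotent decomposition \eqref{eq:ab} together with classical results on the ordinary complex generalized hypergeometric function, and then invoke the $\mathbb{BC}$-holomorphy criterion of the theorem cited from \cite{bc_number}. First, I would fix $Z = z_1 e_1 + z_2 e_2$ and all the parameters other than the one (say $\alpha_i$) with respect to which holomorphy is to be checked. Applying \eqref{eq:ab} gives
\begin{align*}
{}_{p} F_{q}\!\left[\begin{matrix}&\alpha_1,&\ldots,&\alpha_{p};\\&\beta_1,&\ldots,&\beta_{q};&\end{matrix}Z\right] = A(\alpha_{1i})\,e_1 + B(\alpha_{2i})\,e_2,
\end{align*}
where $A$ is the ordinary complex $_{p}F_{q}$-series in the variable $z_1$ regarded as a function of $\alpha_{1i}$ alone (all other $e_1$-components of the parameters being frozen), and $B$ is the analogous function of $\alpha_{2i}$.

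Next, I would argue that $A$ is entire in $\alpha_{1i}$. Every term of the series is a polynomial in $\alpha_{1i}$ (through the Pochhammer symbol $(\alpha_{1i})_n$), hence entire. On any compact set $K\subset\mathbb{C}$ containing $\alpha_{1i}$, a Weierstrass $M$-test using the uniform bound $|(\alpha_{1i})_n|\leq (|{\alpha_{1i}}|+n)^n$ together with the ratio estimate from the proof of the convergence theorem gives uniform convergence on $K\times\{|z_1|\leq r\}$ for admissible $r$. By the Weierstrass convergence theorem, $A$ is holomorphic on $\mathbb{C}$; the same argument shows $B$ is entire in $\alpha_{2i}$. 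For a parameter $\beta_j$, the identical argument applies except that $1/(\beta_{1j})_n$ and $1/(\beta_{2j})_n$ have poles at the nonpositive integers, giving meromorphy with the excluded set precisely $\{0,-1,-2,\ldots\}$.

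Third, I would invoke the sufficiency direction of the $\mathbb{BC}$-holomorphy criterion. Writing $\alpha_i = a_{1i} + i_2 a_{2i}$ so that $\alpha_{1i} = a_{1i} - i_1 a_{2i}$ and $\alpha_{2i} = a_{1i} + i_1 a_{2i}$, and expressing the function in the form $f_1(a_{1i}, a_{2i}) + i_2 f_2(a_{1i}, a_{2i})$, a direct chain-rule computation shows that the bicomplex Cauchy-Riemann equations
\begin{align*}
\frac{\partial f_1}{\partial a_{1i}} = \frac{\partial f_2}{\partial a_{2i}}, \qquad \frac{\partial f_1}{\partial a_{2i}} = -\frac{\partial f_2}{\partial a_{1i}},
\end{align*}
reduce to the ordinary complex Cauchy-Riemann equations for $A$ in $\alpha_{1i}$ and for $B$ in $\alpha_{2i}$, which hold by the previous step. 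Hence ${}_{p}F_{q}$ is $\mathbb{BC}$-holomorphic in $\alpha_i$ on all of $\mathbb{BC}$, and in $\beta_j$ on $\mathbb{BC}$ minus the forbidden idempotent slices $\{\beta_{1j}\in\mathbb{Z}_{\leq 0}\}$ and $\{\beta_{2j}\in\mathbb{Z}_{\leq 0}\}$.

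The main obstacle is the second step: establishing locally uniform convergence of the parameter-dependent series so that Weierstrass's theorem yields holomorphy. Once one commits to a compact parameter domain and extracts a term-by-term majorant independent of the parameter, the reduction to classical complex analysis and then to bicomplex analysis via the idempotent decomposition is essentially automatic; the chain-rule verification of the bicomplex Cauchy-Riemann equations is a straightforward but slightly tedious bookkeeping exercise that I would relegate to a short computation at the end.
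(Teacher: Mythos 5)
Your proposal is correct and follows essentially the same route as the paper: decompose via the idempotent representation \eqref{eq:ab}, use the known analyticity of the classical ${}_{p}F_{q}$ in its parameters (away from $\beta$-components in $\mathbb{Z}_{\leq 0}$) for each component, and conclude $\mathbb{BC}$-holomorphy by verifying the bicomplex Cauchy--Riemann equations. The only difference is that you justify the componentwise analyticity in the parameters by a locally uniform (Weierstrass $M$-test) argument, whereas the paper simply quotes this as a known property of the complex generalized hypergeometric function before writing out the four partial derivatives explicitly.
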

\begin{proof}
Setting $e_1=\frac{1+k}{2}$ and $e_2=\frac{1-k}{2}$ in \eqref{eq:ab}, we get
\begin{align}\label{eq:19}
    &{}_{p} F_{q}\left[\begin{matrix}&\alpha_1, &\alpha_2,\ldots, &\alpha_{p};\\&\beta_1,&\beta_2,\ldots,&\beta_{q};&\end{matrix}Z\right]= {}_{p} F_{q}\left[\begin{matrix}&\alpha_{11}, &\alpha_{12},\ldots, &\alpha_{1p};\\&\beta_{11},&\beta_{12},\ldots,&\beta_{1q};&\end{matrix}z_1 \right]\left(\frac{1+k}{2}\right) +{}_{p} F_{q}\left[\begin{matrix}&\alpha_{21}, &\alpha_{22},\ldots, &\alpha_{2p};\\&\beta_{21},&\beta_{22},\ldots,&\beta_{2q};&\end{matrix}z_2 \right]\left(\frac{1-k}{2}\right)\nonumber\\
    &=\frac{1}{2}\left(  {}_{p} F_{q}\left[\begin{matrix}&\alpha_{11}, &\alpha_{12},\ldots, &\alpha_{1p};\\&\beta_{11},&\beta_{12},\ldots,&\beta_{1q};&\end{matrix}z_1 \right] + {}_{p} F_{q}\left[\begin{matrix}&\alpha_{21}, &\alpha_{22},\ldots, &\alpha_{2p};\\&\beta_{21},&\beta_{22},\ldots,&\beta_{2q};&\end{matrix}z_2 \right]\right)\nonumber\\
    &\quad+i_2\times \frac{i_1}{2}\left( {}_{p} F_{q}\left[\begin{matrix}&\alpha_{11}, &\alpha_{12},\ldots, &\alpha_{1p};\\&\beta_{11},&\beta_{12},\ldots,&\beta_{1q};&\end{matrix}z_1 \right] -{}_{p} F_{q}\left[\begin{matrix}&\alpha_{21}, &\alpha_{22},\ldots, &\alpha_{2p};\\&\beta_{21},&\beta_{22},\ldots,&\beta_{2q};&\end{matrix}z_2 \right]\right).
\end{align}
Let us consider, ${}_{p} F_{q}\left[\begin{matrix}&\alpha_1, &\alpha_2,\ldots, &\alpha_{p};\\&\beta_1,&\beta_2,\ldots,&\beta_{p};&\end{matrix}Z\right]=f_1(z,z')+i_2f_2(z,z')$. Then from the equation \eqref{eq:19}, we obtain
\begin{align*}
    &f_1(z,z')=\frac{1}{2}\left(  {}_{p} F_{q}\left[\begin{matrix}&\alpha_{11}, &\alpha_{12},\ldots, &\alpha_{1p};\\&\beta_{11},&\beta_{12},\ldots,&\beta_{1q};&\end{matrix}z-i_1z' \right] + {}_{p} F_{q}\left[\begin{matrix}&\alpha_{21}, &\alpha_{22},\ldots, &\alpha_{2p};\\&\beta_{21},&\beta_{22},\ldots,&\beta_{2q};&\end{matrix}z+i_1z' \right]\right) \quad\mbox{and}\quad\\
   &f_2(z,z')= \frac{i_1}{2}( \left(  {}_{p} F_{q}\left[\begin{matrix}&\alpha_{11}, &\alpha_{12},\ldots, &\alpha_{1p};\\&\beta_{11},&\beta_{12},\ldots,&\beta_{1q};&\end{matrix}z-i_1z' \right] - {}_{p} F_{q}\left[\begin{matrix}&\alpha_{21}, &\alpha_{22},\ldots, &\alpha_{2p};\\&\beta_{21},&\beta_{22},\ldots,&\beta_{2q};&\end{matrix}z+i_1z' \right]\right).
\end{align*}
Since, $Z\in\mathbb{B}(0,1)$ which implies $|z_1|<1$ and $|z_2|<1$, then generalized hypergeometric function ${}_{p} F_{q}\left[\begin{matrix}&\alpha_{11}, &\alpha_{12},.... &\alpha_{1p};\\&\beta_{11},&\beta_{12},.....&\beta_{1q};&\end{matrix}z_1 \right]$ is analytic function of $\alpha_{1i}$ and $\beta_{1j}$ except for simple pole at $\beta_{1j}=0$ or each negative integer for $i=1,2,\ldots,p$ and $j=1,2,\ldots,q$. Therefore, $f_1, f_2$ are both analytic function of $\alpha_{1i},\beta_{1j},\alpha_{2i}$ and $\beta_{2j}$ except  at least one of $\beta_{1j}$ and $\beta_{2j}$ are either zero or negative integer.
Now,
\begin{align*}
   & \frac{\partial f_1}{\partial z}=\frac{1}{2}\left(  {}_{p} F'_{q}\left[\begin{matrix}&\alpha_{11}, &\alpha_{12},\ldots, &\alpha_{1p};\\&\beta_{11},&\beta_{12},\ldots,&\beta_{1q};&\end{matrix}z-i_1z'\right] \right) + \frac{1}{2}\left({}_{p} F'_{q}\left[\begin{matrix}&\alpha_{21}, &\alpha_{22},\ldots, &\alpha_{2p};\\&\beta_{21},&\beta_{22},\ldots,&\beta_{2q};&\end{matrix}z+i_1z'\right]\right), \\
    & \frac{\partial f_1}{\partial z'}=\frac{-i_1}{2}\left(  {}_{p} F'_{q}\left[\begin{matrix}&\alpha_{11}, &\alpha_{12},\ldots, &\alpha_{1p};\\&\beta_{11},&\beta_{12},\ldots,&\beta_{1q};&\end{matrix}z-i_1z' \right]\right) + \frac{i_1}{2}\left({}_{p} F'_{q}\left[\begin{matrix}&\alpha_{21}, &\alpha_{22},\ldots, &\alpha_{2p};\\&\beta_{21},&\beta_{22},\ldots,&\beta_{2q};&\end{matrix}z+i_1z' \right]\right),\\
    &\frac{\partial f_2}{\partial z}=\frac{i_1}{2}\left(  {}_{p} F'_{q}\left[\begin{matrix}&\alpha_{11}, &\alpha_{12},\ldots, &\alpha_{1p};\\&\beta_{11},&\beta_{12},\ldots,&\beta_{1q};&\end{matrix}z-i_1z'\right] \right)  - \frac{i_1}{2}\left({}_{p} F'_{q}\left[\begin{matrix}&\alpha_{21}, &\alpha_{22},\ldots, &\alpha_{2p};\\&\beta_{21},&\beta_{22},\ldots,&\beta_{2q};&\end{matrix}z+i_1z' \right]\right),\\
    \mbox{and}\\
    &\frac{\partial f_2}{\partial z'}=\frac{-i^2_1}{2}\left(  {}_{p} F'_{q}\left[\begin{matrix}&\alpha_{11}, &\alpha_{12},\dots, &\alpha_{1p};\\&\beta_{11},&\beta_{12},\dots,&\beta_{1q};&\end{matrix}z-i_1z' \right]\right) - \frac{i^2_1}{2}\left({}_{p} F'_{q}\left[\begin{matrix}&\alpha_{21}, &\alpha_{22},\dots, &\alpha_{2p};\\&\beta_{21},&\beta_{22},\dots,&\beta_{2q};&\end{matrix}z+i_1z'\right]\right).
\end{align*}
From the above equations, it can be observed that
\begin{align*}
    \frac{\partial f_1}{\partial z}= \frac{\partial f_2}{\partial z'} \quad
      \mbox{and} \quad  \frac{\partial f_1}{\partial z'}=- \frac{\partial f_2}{\partial z}.
\end{align*}
Therefore, $f_1$ and $f_2$ satisfies bicomplex Cauchy-Riemann equation for all $Z\in\mathbb{BC}$. When we consider $\alpha_i$ as a variable and ${}_{p} F_{q}\left[\begin{matrix}&\alpha_1, &\alpha_2,.... &\alpha_{p};\\&\beta_1,&\beta_2,.....&\beta_{p};&\end{matrix}Z\right]=g_1(a_{1i},a_{2i})+i_2g_2(a_{1i},a_{2i})$ then from \eqref{eq:19}, we get
\begin{align*}
    &g_1=\frac{1}{2}\left(  {}_{p} F_{q}\left[\begin{matrix}&\alpha_{11},\ldots,&a_{1i}-i_1a_{2i},\ldots,&\alpha_{1p};\\&\beta_{11},\ldots,&\beta_{1i},\;\ldots,&\beta_{1q};&\end{matrix}z_1 \right] + {}_{p} F_{q}\left[\begin{matrix} &\alpha_{22},\ldots,&a_{1i}+i_1a_{2i},\ldots, &\alpha_{2p};\\&\beta_{21},\ldots,&\beta_{2i},\ldots,&\beta_{2q};&\end{matrix}z_2 \right]\right)\; \mbox{and}\\
   &g_2= \frac{i_1}{2} \left(  {}_{p} F_{q}\left[\begin{matrix}&\alpha_{11},\ldots,&a_{1i}-i_1a_{2i},\ldots,&\alpha_{1p};\\&\beta_{11},\ldots,&\beta_{1i},\;\ldots,&\beta_{1q};&\end{matrix}z_1 \right] - {}_{p} F_{q}\left[\begin{matrix} &\alpha_{22},\ldots,&a_{1i}+i_1a_{2i},\ldots, &\alpha_{2p};\\&\beta_{21},\ldots,&\beta_{2i},\ldots,&\beta_{2q};&\end{matrix}z_2 \right]\right).
\end{align*}
Similarly, we can easily prove $g_1$ and $g_2$ satisfies bicomplex Cauchy-Riemann equation that is
\begin{align*}
    \frac{\partial g_1}{\partial a_{1i}}= \frac{\partial g_2}{\partial a_{2i}} \quad
      \mbox{and} \quad  \frac{\partial g_1}{\partial a_{2i}}=- \frac{\partial g_2}{\partial a_{1i}}.
\end{align*}
 Hence, the bicomplex generalized   hypergeometric function ${}_{p} F_{q}\left[\begin{matrix}&\alpha_1, &\alpha_2,.... &\alpha_{p};\\&\beta_1,&\beta_2,.....&\beta_{q};&\end{matrix}Z\right]$ is $\mathbb{BC}$ holomorphic in $\alpha_i$ and $\beta_j$ for $i=1,2,....p$ and $j=1,2,.....q$ except when at least one of $\beta_{1j}$ and $\beta_{2j}$ are either zero or negative integer.
\end{proof}
\section{Contiguous relations}
In this section several contiguous relations of bicomplex generalized hypergeometric function have been derived.
\begin{theo}\label{eq:th1}
    If  $M=me_1+ne_2\in{\mathbb{BC}}$, where $m,n\in{\mathbb{Z}^+}$ then contiguous relation of the bicomplex generalized hypergeometric functions is given as follows :
    \begin{align*}
    & {}_{p} F_{q}\left[\begin{matrix}&\alpha_1+M, &\alpha_2,\ldots, &\alpha_{p};\\&\beta_1,&\beta_2,\ldots,&\beta_{q};&\end{matrix}Z\right]+{}_{p} F_{q}\left[\begin{matrix}&\alpha_1+\overline{M}, &\alpha_2,\ldots, &\alpha_{p};\\&\beta_1,&\beta_2,\ldots,&\beta_{q};&\end{matrix}Z\right]\\
    &=(\alpha_{1}-1)!\sum_{s=0}^{m}\frac{\binom{m}{s}}{(\alpha_{1}+s-1)!}\frac{\prod\limits_{i=1}^{p}(\alpha_i)_s}{\prod\limits_{j=1}^{q}(\beta_j)_s}Z^s{}_{p} {F}_{q}\left[\begin{matrix}&\alpha_{1}+s, &\alpha_{2}+s,\ldots, &\alpha_{p+}s;\\&\beta_{1}+s,&\beta_{2}+s,\ldots,&\beta_{q}+s;&\end{matrix}Z \right]\\&\quad+(\alpha_{1}-1)!\sum_{t=0}^{n}\frac{\binom{n}{t}}{(\alpha_{1}+t-1)!}\frac{\prod\limits_{i=1}^{p}(\alpha_i)_t}{\prod\limits_{j=1}^{q}(\beta_j)_t}Z^t{}_{p} {F}_{q}\left[\begin{matrix}&\alpha_{1}+t, &\alpha_{2}+t,\ldots, &\alpha_{p+}t;\\&\beta_{1}+t,&\beta_{2}+t,\ldots,&\beta_{q}+t;&\end{matrix}Z \right].
\end{align*}
\end{theo}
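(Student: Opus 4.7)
First I would compute $\overline{M}$ explicitly. With $e_1=(1+k)/2$ and $e_2=(1-k)/2$, one has $M=\tfrac{m+n}{2}+i_2\bigl(i_1\tfrac{m-n}{2}\bigr)$; the bar conjugation from \cite{bc_number} complex-conjugates both $\mathbb{C}(i_1)$-components in the decomposition $Z=z+i_2z'$, so $\overline{M}=\tfrac{m+n}{2}-i_2\bigl(i_1\tfrac{m-n}{2}\bigr)=ne_1+me_2$ after re-expanding in the idempotent basis. Hence $\alpha_1+M=(\alpha_{11}+m)e_1+(\alpha_{21}+n)e_2$ and $\alpha_1+\overline{M}=(\alpha_{11}+n)e_1+(\alpha_{21}+m)e_2$, so by the idempotent representation \eqref{eq:ab} the LHS equals
\[
\bigl[F_1^{(m)}+F_1^{(n)}\bigr]e_1+\bigl[F_2^{(m)}+F_2^{(n)}\bigr]e_2,
\]
where $F_k^{(r)}:={}_pF_q[\alpha_{k1}+r,\alpha_{k2},\ldots,\alpha_{kp};\beta_{k1},\ldots,\beta_{kq};z_k]$.

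Next, I would simplify the RHS. Using $\Gamma(Z)=\Gamma(\alpha)e_1+\Gamma(\beta)e_2$ from \eqref{eq:g}, the quotient $(\alpha_1-1)!/(\alpha_1+s-1)!=\Gamma(\alpha_1)/\Gamma(\alpha_1+s)$ is $1/(\alpha_1)_s$ componentwise, which cancels the $i=1$ factor in $\prod_{i=1}^{p}(\alpha_i)_s$. Combining this with the idempotent expansions \eqref{eq:fa}--\eqref{eq:fb} for the Pochhammer symbols, \eqref{eq:ab} applied to the inner ${}_pF_q$, and $Z^s=z_1^{s}e_1+z_2^{s}e_2$, each of the two sums on the RHS splits into an $e_1$-part and an $e_2$-part. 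The problem thus reduces to proving, for each $k\in\{1,2\}$ and each $r\in\{m,n\}$, the classical scalar identity
\[
{}_pF_q[a_1+r,a_2,\ldots,a_p;b_1,\ldots,b_q;z]=\sum_{s=0}^{r}\binom{r}{s}\frac{\prod_{i=2}^{p}(a_i)_s}{\prod_{j=1}^{q}(b_j)_s}\,z^{s}\,{}_pF_q[a_1+s,\ldots,a_p+s;b_1+s,\ldots,b_q+s;z].
\]

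To verify this scalar identity I would expand the inner ${}_pF_q$ on the RHS as a power series, apply $(a_i)_s(a_i+s)_\ell=(a_i)_{s+\ell}$ (and the analogue for $(b_j)$), set $N=s+\ell$, and collect the coefficient of $z^N$; the identity then collapses to
\[
\sum_{s=0}^{\min(r,N)}\binom{r}{s}\frac{(a_1+s)_{N-s}}{(N-s)!}=\frac{(a_1+r)_N}{N!},
\]
which follows from the generating-function computation $\sum_{s=0}^{r}\binom{r}{s}[t^{N-s}](1-t)^{-a_1-s}=[t^N](1-t)^{-a_1}\sum_{s}\binom{r}{s}(t/(1-t))^{s}=[t^N](1-t)^{-a_1-r}$. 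Applying this identity with $r=m$ on both idempotent components supplies the $F_k^{(m)}$ terms from the first $s$-sum on the RHS, and applying it with $r=n$ supplies the $F_k^{(n)}$ terms from the $t$-sum; adding and using $e_1+e_2=1$ matches the decomposed LHS exactly. The only substantive technical step is the scalar Pochhammer identity above; if the generating-function shortcut is deemed too terse, an equivalent induction on $r$ via Pascal's rule $\binom{r}{s}=\binom{r-1}{s-1}+\binom{r-1}{s}$ combined with $(a_1+r)_N=(a_1+r-1)_N+N(a_1+r)_{N-1}$ is available.
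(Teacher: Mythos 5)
Your proposal is correct, and it reaches the result by a genuinely different route from the paper. The paper first establishes the one-step recurrence ${}_{p}F_{q}[\alpha_{11}+1,\ldots;z_1]={}_{p}F_{q}[\alpha_{11},\ldots;z_1]+\tfrac{z_1}{\alpha_{11}}{}_{p}F_{q}'[\alpha_{11},\ldots;z_1]$, iterates it (by an induction that is only sketched) to express ${}_{p}F_{q}[\alpha_{11}+m,\ldots;z_1]$ as a binomial sum of higher derivatives ${}_{p}F_{q}^{(s)}$, glues the two idempotent components, and only at the end converts the derivatives into parameter-shifted functions via the differentiation formula of Section~5. You instead decompose first, identify the ratio $(\alpha_1-1)!/(\alpha_1+s-1)!$ as $1/(\alpha_1)_s$ componentwise, and reduce everything to the single scalar identity ${}_pF_q[a_1+r,\ldots;z]=\sum_{s=0}^{r}\binom{r}{s}\frac{\prod_{i\ge 2}(a_i)_s}{\prod_j(b_j)_s}z^s\,{}_pF_q[a_1+s,\ldots,a_p+s;b_1+s,\ldots,b_q+s;z]$, which you prove by extracting the coefficient of $z^N$ and evaluating $\sum_s\binom{r}{s}(a_1+s)_{N-s}/(N-s)!=(a_1+r)_N/N!$ with a clean generating-function computation; this is exactly the identity the paper obtains implicitly after substituting the differential relation into its induction formula. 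Your version buys self-containedness (no appeal to the derivative formula) and actually supplies the combinatorial verification that the paper leaves as ``by mathematical induction we can easily prove''; the paper's version buys reuse of machinery already developed and produces the intermediate derivative form \eqref{eq:wlk}, which it recycles later for the differential equation in Theorem~\ref{eq:th7}. Both handle $\overline{M}=ne_1+me_2$ and the idempotent bookkeeping identically, so the two arguments agree on everything except how the scalar contiguous relation is established.
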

\begin{proof}
    Assume that $M=me_1+ne_2\in{\mathbb{BC}}$, where $m,n\in{\mathbb{Z}^+}$ then $\overline{M}=ne_1+me_2$.
Let us derive recurrence relation of the generalized hypergeometric function,
\begin{align}\label{eq:uj}
    {}_{p} F_{q}\left[\begin{matrix}&\alpha_{11}+1, &\alpha_{12},\ldots, &\alpha_{1p};\\&\beta_{11},&\beta_{12},\ldots,&\beta_{1q};&\end{matrix}z_1 \right]&=\sum_{k=0}^{\infty} \frac{(\alpha_{11}+1)_k\prod\limits_{i=2}^{p} (\alpha_{1i})_k}{\prod\limits_{j=1}^{q} (\beta_{1j})_k}\frac{z_1^k}{k!}\nonumber\\
    &=\sum_{k=0}^{\infty} \frac{\prod\limits_{i=1}^{p} (\alpha_{1i})_k}{\prod\limits_{j=1}^{q} (\beta_{1j})_k}\left(\frac{\alpha_{11}+k}{\alpha_{11}}\right)\frac{z_1^k}{k!}\nonumber\\
    &=\sum_{k=0}^{\infty} \frac{\prod\limits_{i=1}^{p} (\alpha_{1i})_k}{\prod\limits_{j=1}^{q} (\beta_{1j})_k}\frac{z_1^k}{k!}+\frac{1}{\alpha_{11}}\sum_{k=1}^{\infty} \frac{\prod\limits_{i=1}^{p} (\alpha_{1i})_k}{\prod\limits_{j=1}^{q} (\beta_{1j})_k}\frac{z_1^k}{(k-1)!}\nonumber\\
    &={}_{p} F_{q}\left[\begin{matrix}&\alpha_{11}, &\alpha_{12},\ldots,&\alpha_{1p};\\&\beta_{11},&\beta_{12},\ldots,&\beta_{1q};&\end{matrix}z_1 \right]+ \frac{z_1}{\alpha_{11}}{}_{p} F'_{q}\left[\begin{matrix}&\alpha_{11}, &\alpha_{12},\ldots,&\alpha_{1p};\\&\beta_{11},&\beta_{12},\ldots,&\beta_{1q};&\end{matrix}z_1 \right].
\end{align}
After replacing  $\alpha_{11}$ by $\alpha_{11}+1$ in the relation \eqref{eq:uj}, we have
\begin{align}\label{eq:ou}
    {}_{p} F_{q}\left[\begin{matrix}&\alpha_{11}+2, &\alpha_{12},\ldots, &\alpha_{1p};\\&\beta_{11},&\beta_{12},\ldots,&\beta_{1q};&\end{matrix}z_1 \right]
    &={}_{p} F_{q}\left[\begin{matrix}&\alpha_{11}, &\alpha_{12},\ldots, &\alpha_{1p};\\&\beta_{11},&\beta_{12},.....&\beta_{1q};&\end{matrix}z_1 \right]+ \frac{2z_1}{\alpha_{11}}{}_{p} F'_{q}\left[\begin{matrix}&\alpha_{11}, &\alpha_{12},\ldots, &\alpha_{1p};\\&\beta_{11},&\beta_{12},\ldots,&\beta_{1q};&\end{matrix}z_1 \right]\nonumber \\&\quad+ \frac{z_1^2}{\alpha_{11}(\alpha_{11}+1)}{}_{p} F''_{q}\left[\begin{matrix}&\alpha_{11}, &\alpha_{12},\ldots, &\alpha_{1p};\\&\beta_{11},&\beta_{12},\ldots,&\beta_{1q};&\end{matrix}z_1 \right].
\end{align}
 Replacing $\alpha_{11}$ by $\alpha_{11}+1$ in the above relation \eqref{eq:ou}, we get
\begin{align*}
     {}_{p} F_{q}\left[\begin{matrix}&\alpha_{11}+3, &\alpha_{12},\ldots &\alpha_{1p};\\&\beta_{11},&\beta_{12},\ldots,&\beta_{1q};&\end{matrix}z_1 \right]
    &={}_{p} F_{q}\left[\begin{matrix}&\alpha_{11}, &\alpha_{12},\ldots, &\alpha_{1p};\\&\beta_{11},&\beta_{12},\ldots,&\beta_{1q};&\end{matrix}z_1 \right]+ \frac{3z_1}{\alpha_{11}}{}_{p} F'_{q}\left[\begin{matrix}&\alpha_{11}, &\alpha_{12},\ldots, &\alpha_{1p};\\&\beta_{11},&\beta_{12},\ldots,&\beta_{1q};&\end{matrix}z_1 \right]\nonumber \\& \quad +\frac{3z_1^2}{\alpha_{11}(\alpha_{11}+1)}{}_{p} F''_{q}\left[\begin{matrix}&\alpha_{11}, &\alpha_{12},\ldots, &\alpha_{1p};\\&\beta_{11},&\beta_{12},\ldots,&\beta_{1q};&\end{matrix}z_1 \right]\\& \quad+\frac{z_1^3}{\alpha_{11}(\alpha_{11}+1)(\alpha_{11}+2)}{}_{p} F''_{q}\left[\begin{matrix}&\alpha_{11}, &\alpha_{12},\ldots, &\alpha_{1p};\\&\beta_{11},&\beta_{12},\ldots,&\beta_{1q};&\end{matrix}z_1 \right].
\end{align*}
Therefore, by mathematical induction we can easily prove that
\begin{align}\label{eq:wlk}
     {}_{p} F_{q}\left[\begin{matrix}&\alpha_{11}+m, &\alpha_{12},\ldots, &\alpha_{1p};\\&\beta_{11},&\beta_{12},\ldots,&\beta_{1q};&\end{matrix}z_1 \right]&=(\alpha_{11}-1)!\sum_{s=0}^{m}\frac{\binom{m}{s}}{(\alpha_{11}+s-1)!}z_1^s{}_{p} {F}_{q}^s\left[\begin{matrix}&\alpha_{11}, &\alpha_{12},\ldots, &\alpha_{1p};\\&\beta_{11},&\beta_{12},\ldots,&\beta_{1q};&\end{matrix}z_1 \right].
\end{align}
Using \eqref{eq:wlk} and \eqref{eq:ab}, we have
\begin{align}\label{eq:sd}
    &{}_{p} F_{q}\left[\begin{matrix}&\alpha_1+M, &\alpha_2,\ldots, &\alpha_{p};\\&\beta_1,&\beta_2,\ldots,&\beta_{q};&\end{matrix}Z\right]\nonumber\\&={}_{p} F_{q}\left[\begin{matrix}&\alpha_{11}+m, &\alpha_{12},\ldots, &\alpha_{1p};\\&\beta_{11},&\beta_{12},\ldots,&\beta_{1q};&\end{matrix}z_1 \right]e_1+{}_{p} F_{q}\left[\begin{matrix}&\alpha_{21}+n, &\alpha_{22},\ldots, &\alpha_{2p};\\&\beta_{21},&\beta_{22},\ldots,&\beta_{2q};&\end{matrix}z_2 \right]e_2\nonumber\\&=(\alpha_{11}-1)!\sum_{s=0}^{m}\frac{\binom{m}{s}}{(\alpha_{11}+s-1)!}z_1^s{}_{p} {F}_{q}^s\left[\begin{matrix}&\alpha_{11}, &\alpha_{12},\ldots, &\alpha_{1p};\\&\beta_{11},&\beta_{12},\ldots,&\beta_{1q};&\end{matrix}z_1 \right]e_1\nonumber\\&\quad+(\alpha_{21}-1)!\sum_{t=0}^{n}\frac{\binom{n}{t}}{(\alpha_{21}+t-1)!}z_2^t{}_{p} {F}_{q}^t\left[\begin{matrix}&\alpha_{21}, &\alpha_{22},\ldots, &\alpha_{2p};\\&\beta_{21},&\beta_{22},\ldots,&\beta_{2q};&\end{matrix}z_2 \right]e_2.
\end{align}
Performing the replacement $M$ by $\overline{M}$ in the above recurrence relation \eqref{eq:sd}, we have
\begin{align}\label{eq:fg}
    {}_{p} F_{q}\left[\begin{matrix}&\alpha_1+\overline{M}, &\alpha_2,\ldots, &\alpha_{p};\\&\beta_1,&\beta_2,\ldots,&\beta_{q};&\end{matrix}Z\right]&=(\alpha_{11}-1)!\sum_{t=0}^{n}\frac{\binom{n}{t}}{(\alpha_{11}+t-1)!}z_1^t{}_{p} {F}_{q}^t\left[\begin{matrix}&\alpha_{11}, &\alpha_{12},\ldots, &\alpha_{1p};\\&\beta_{11},&\beta_{12},\ldots,&\beta_{1q};&\end{matrix}z_1 \right]e_1\nonumber\\&\quad+(\alpha_{21}-1)!\sum_{s=0}^{m}\frac{\binom{m}{s}}{(\alpha_{21}+s-1)!}z_2^s{}_{p} {F}_{q}^s\left[\begin{matrix}&\alpha_{21}, &\alpha_{22},\ldots, &\alpha_{2p};\\&\beta_{21},&\beta_{22},\ldots,&\beta_{2q};&\end{matrix}z_2 \right]e_2.
\end{align}
Finally, using \eqref{eq:sd} and \eqref{eq:fg}, we get
\begin{align*}
   & {}_{p} F_{q}\left[\begin{matrix}&\alpha_1+M, &\alpha_2,\ldots, &\alpha_{p};\\&\beta_1,&\beta_2,\ldots,&\beta_{q};&\end{matrix}Z\right]+{}_{p} F_{q}\left[\begin{matrix}&\alpha_1+\overline{M}, &\alpha_2,\ldots,&\alpha_{p};\\&\beta_1,&\beta_2,\ldots,&\beta_{q};&\end{matrix}Z\right]\\
    &=(\alpha_{1}-1)!\sum_{s=0}^{m}\frac{\binom{m}{s}}{(\alpha_{1}+s-1)!}Z^s{}_{p} {F}_{q}^s\left[\begin{matrix}&\alpha_{1}, &\alpha_{2},\ldots, &\alpha_{p};\\&\beta_{1},&\beta_{2},\ldots,&\beta_{q};&\end{matrix}Z \right]\\&\quad+(\alpha_{1}-1)!\sum_{t=0}^{n}\frac{\binom{n}{t}}{(\alpha_{1}+t-1)!}Z^t{}_{p} {F}_{q}^t\left[\begin{matrix}&\alpha_{1}, &\alpha_{2},\ldots, &\alpha_{p};\\&\beta_{1},&\beta_{2},\ldots,&\beta_{q};&\end{matrix}Z \right]\\
    &=(\alpha_{1}-1)!\sum_{s=0}^{m}\frac{\binom{m}{s}}{(\alpha_{1}+s-1)!}\frac{\prod\limits_{i=1}^{p}(\alpha_i)_s}{\prod\limits_{j=1}^{q}(\beta_j)_s}Z^s{}_{p} {F}_{q}\left[\begin{matrix}&\alpha_{1}+s, &\alpha_{2}+s,\ldots, &\alpha_{p+}s;\\&\beta_{1}+s,&\beta_{2}+s,\ldots,&\beta_{q}+s;&\end{matrix}Z \right]\\&\quad+(\alpha_{1}-1)!\sum_{t=0}^{n}\frac{\binom{n}{t}}{(\alpha_{1}+t-1)!}\frac{\prod\limits_{i=1}^{p}(\alpha_i)_t}{\prod\limits_{j=1}^{q}(\beta_j)_t}Z^t{}_{p} {F}_{q}\left[\begin{matrix}&\alpha_{1}+t, &\alpha_{2}+t,\ldots,&\alpha_{p+}t;\\&\beta_{1}+t,&\beta_{2}+t,\ldots,&\beta_{q}+t;&\end{matrix}Z \right],
\end{align*}
which completes the proof of the theorem.
\end{proof}
\begin{theo}
     Let $M=me_1+ne_2\in{\mathbb{BC}}$, where $m,n\in{\mathbb{Z}^+}$. The following contiguous relation of the bicomplex generalized hypergeometric function holds true.
    \begin{align*}
         & {}_{p} F_{q}\left[\begin{matrix}&\alpha_1-M, &\alpha_2,\ldots, &\alpha_{p};\\&\beta_1,&\beta_2,\ldots,&\beta_{q};&\end{matrix}Z\right]+{}_{p} F_{q}\left[\begin{matrix}&\alpha_1-\overline{M}, &\alpha_2,\ldots, &\alpha_{p};\\&\beta_1\,&\beta_2,\ldots,&\beta_{q};&\end{matrix}Z\right]\\&=\sum_{s=0}^{m}\frac{\binom{m}{s}\prod\limits_{i=2}^{p}(\alpha_{i})_s}{\prod\limits_{j=1}^{q}(\beta_{j})_s}(-Z)^s{}_{p} {F}_{q}\left[\begin{matrix}&\alpha_{1}, &\alpha_{2}+s,\ldots, &\alpha_{p}+s;\\&\beta_{1}+s,&\beta_{2}+s,\ldots,&\beta_{q}+s;&\end{matrix}Z \right]\\& \quad +\sum_{t=0}^{n}\frac{\binom{n}{t}\prod\limits_{i=2}^{p}(\alpha_{i})_t}{\prod\limits_{j=1}^{q}(\beta_{j})_t}(-Z)^t{}_{p} {F}_{q}\left[\begin{matrix}&\alpha_{1}, &\alpha_{2}+t,\ldots, &\alpha_{p}+t;\\&\beta_{1}+t,&\beta_{2}+t,\ldots,&\beta_{q}+t;&\end{matrix}Z \right].
    \end{align*}
\end{theo}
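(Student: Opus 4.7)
The strategy is to mirror Theorem 5.1: establish first a scalar downward-shift identity for the ordinary generalized hypergeometric function, then lift it to $\mathbb{BC}$ via the idempotent representation \eqref{eq:ab}, and finally symmetrize by swapping $M \leftrightarrow \overline{M}$. The key scalar identity I intend to prove is
\begin{equation*}
{}_pF_q\!\left[\begin{matrix}\alpha_{11}-m,\alpha_{12},\ldots,\alpha_{1p};\\ \beta_{11},\ldots,\beta_{1q};\end{matrix}z_1\right]=\sum_{s=0}^{m}\binom{m}{s}\frac{\prod_{i=2}^{p}(\alpha_{1i})_{s}}{\prod_{j=1}^{q}(\beta_{1j})_{s}}(-z_1)^{s}\,{}_pF_q\!\left[\begin{matrix}\alpha_{11},\alpha_{12}+s,\ldots,\alpha_{1p}+s;\\ \beta_{11}+s,\ldots,\beta_{1q}+s;\end{matrix}z_1\right],
\end{equation*}
and analogously with index $2$ and shift $n$.

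For the base case $m=1$, I would use the Pochhammer identity $(a-1)_{k}=(a)_{k}-k\,(a)_{k-1}$ (valid for $k\geq 1$, with the $k=0$ term unaffected) inside the series definition \eqref{eq:gh}. Collecting the first piece gives the unshifted ${}_pF_q$, while in the second piece the factor $k/k!$ reduces to $1/(k-1)!$; after re-indexing $k\mapsto k+1$ and pulling out $\prod_{i\geq 2}\alpha_{1i}/\prod_{j}\beta_{1j}$, the residual series is exactly ${}_pF_q$ with every $\alpha_{1i}$ ($i\geq 2$) and $\beta_{1j}$ raised by one. This yields the $m=1$ case. I would then proceed by induction on $m$: apply the $m=1$ identity with $\alpha_{11}$ replaced by $\alpha_{11}-m$, apply the inductive hypothesis to each of the two resulting ${}_pF_q$'s (for the second one noticing the telescoping $\alpha_{1i}\,(\alpha_{1i}+1)_{s}=(\alpha_{1i})_{s+1}$ and likewise for $\beta_{1j}$), re-index by $s\mapsto s+1$ in the second sum, and combine via Pascal's identity $\binom{m}{s}+\binom{m}{s-1}=\binom{m+1}{s}$.

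Having the scalar identity in hand, the idempotent expansion \eqref{eq:ab} gives
\begin{equation*}
{}_pF_q\!\left[\begin{matrix}\alpha_1-M,\alpha_2,\ldots,\alpha_p;\\ \beta_1,\ldots,\beta_q;\end{matrix}Z\right]={}_pF_q[\alpha_{11}-m,\ldots;z_1]e_1+{}_pF_q[\alpha_{21}-n,\ldots;z_2]e_2,
\end{equation*}
and applying the scalar formula on each component, together with $(\alpha_i)_s=(\alpha_{1i})_s e_1+(\alpha_{2i})_s e_2$, $(\beta_j)_s=(\beta_{1j})_s e_1+(\beta_{2j})_s e_2$ and $Z^s=z_1^s e_1+z_2^s e_2$, reassembles each scalar sum into a bicomplex sum. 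Writing out ${}_pF_q[\alpha_1-\overline{M},\ldots;Z]$ the same way (it is the original expression with $m$ and $n$ exchanged on the two idempotent components) and adding the two identities produces precisely the two sums on the right-hand side of the claim.

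The only genuinely delicate step is the induction in the scalar case, and within it the bookkeeping that reconciles the factor $-z\,\prod\alpha_{1i}/\prod\beta_{1j}$ coming from the $m=1$ identity with the shifted ratio $\prod(\alpha_{1i}+1)_s/\prod(\beta_{1j}+1)_s$ from the hypothesis; once the identity $(\alpha_{1i}+1)_s=(\alpha_{1i})_{s+1}/\alpha_{1i}$ is used to absorb the prefactor into the re-indexed sum, Pascal's identity closes the induction cleanly. The subsequent lift to $\mathbb{BC}$ and the $M\leftrightarrow\overline{M}$ symmetrization are routine and follow exactly the pattern of Theorem \ref{eq:th1}.
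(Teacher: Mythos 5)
Your proposal is correct and follows essentially the same route as the paper: establish the scalar downward-shift identity (the paper's equation \eqref{eq:zb}) by induction on $m$ starting from the one-step contiguous relation \eqref{eq:cn}, lift it to $\mathbb{BC}$ componentwise via the idempotent representation \eqref{eq:ab}, and add the $M$ and $\overline{M}$ versions. The only cosmetic difference is that you obtain the $m=1$ case directly from the Pochhammer identity $(a-1)_k=(a)_k-k(a)_{k-1}$ rather than from the derivative recurrence \eqref{eq:uj}, and you spell out the Pascal-identity bookkeeping of the induction that the paper merely asserts.
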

\begin{proof}
    Replacing $\alpha_{11}$ by $(\alpha_{11}-1)$ in \eqref{eq:uj}, we can easily get a contiguous relation of the generalized hypergeometric function as follows :
\begin{align}\label{eq:cn}
     {}_{p} F_{q}\left[\begin{matrix}&\alpha_{11}-1, &\alpha_{12},\ldots, &\alpha_{1p};\\&\beta_{11},&\beta_{12},\ldots,&\beta_{1q};&\end{matrix}z_1 \right]& = {}_{p} F_{q}\left[\begin{matrix}&\alpha_{11}, &\alpha_{12},\ldots, &\alpha_{1p};\\&\beta_{11},&\beta_{12},\ldots,&\beta_{1q};&\end{matrix}z_1 \right]\nonumber\\&\quad-\frac{z_1\prod\limits_{i=2}^{p}(\alpha_{1i})}{\prod\limits_{j=1}^{q}(\beta_{1j})} {}_{p} F_{q}\left[\begin{matrix}&\alpha_{11}, &\alpha_{12}+1,\ldots, &\alpha_{1p}+1;\\&\beta_{11}+1,&\beta_{12}+1,\ldots,&\beta_{1q}+1;&\end{matrix}z_1 \right].
\end{align}
Substitute  $\alpha_{11}$ by $(\alpha_{11}-1)$ in \eqref{eq:cn}, we get
\begin{align}
   & {}_{p} F_{q}\left[\begin{matrix}&\alpha_{11}-2, &\alpha_{12},\ldots, &\alpha_{1p};\nonumber\\&\beta_{11},&\beta_{12},\ldots,&\beta_{1q};&\end{matrix}z_1 \right] \\& = {}_{p} F_{q}\left[\begin{matrix}&\alpha_{11}, &\alpha_{12},\ldots, &\alpha_{1p};\\&\beta_{11},&\beta_{12},\ldots,&\beta_{1q};&\end{matrix}z_1 \right]-\frac{2z_1 \prod\limits_{i=2}^{p}(\alpha_{1i})}{\prod\limits_{j=1}^{q}(\beta_{1j})} {}_{p} F_{q}\left[\begin{matrix}&\alpha_{11}, &\alpha_{12}+1,\ldots, &\alpha_{1p}+1;\\&\beta_{11}+1,&\beta_{12}+1,\ldots,&\beta_{1q}+1;&\end{matrix}z_1 \right]\nonumber\\& \quad+\frac{z_1^2 \prod\limits_{i=2}^{p}{(\alpha_{1i})}_2}{\prod\limits_{j=1}^{q}{(\beta_{1j})}_2} {}_{p} F_{q}\left[\begin{matrix}&\alpha_{11}, &\alpha_{12}+2,\ldots, &\alpha_{1p}+2;\\&\beta_{11}+2,&\beta_{12}+2,\ldots,&\beta_{1q}+2;&\end{matrix}z_1 \right].\nonumber
\end{align}
By principle of mathematical induction, we can easily prove that generalized hypergeometric function satisfies following contiguous relation :
\begin{align}\label{eq:zb}
     &{}_{p} F_{q}\left[\begin{matrix}&\alpha_{11}-m, &\alpha_{12},\ldots, &\alpha_{1p};\\&\beta_{11},&\beta_{12},\ldots,&\beta_{1q};&\end{matrix}z_1 \right]=\sum_{s=0}^{m}\frac{\binom{m}{s}\prod\limits_{i=2}^{p}{(\alpha_{1i})}_s}{\prod\limits_{j=1}^{q}{(\beta_{1j})}_s}(-z_1)^s{}_{p} {F}_{q}^s\left[\begin{matrix}&\alpha_{11}, &\alpha_{12}+s,\ldots, &\alpha_{1p}+s;\\&\beta_{11}+s,&\beta_{12}+s,\ldots,&\beta_{1q}+s;&\end{matrix}z_1 \right].
\end{align}
Using \eqref{eq:zb} and\eqref{eq:ab}, we obtain
\begin{align}\label{eq:jl}
   &{}_{p} F_{q}\left[\begin{matrix}&\alpha_{1}-M, &\alpha_{2},\ldots, &\alpha_{p};\\&\beta_{1},&\beta_{2},\ldots,&\beta_{q};&\end{matrix}Z \right]\nonumber\\&= {}_{p} F_{q}\left[\begin{matrix}&\alpha_{11}-m, &\alpha_{12},\ldots, &\alpha_{1p};\\&\beta_{11},&\beta_{12},\ldots,&\beta_{1q};&\end{matrix}z_1 \right]e_1 +{}_{p} F_{q}\left[\begin{matrix}&\alpha_{21}-n, &\alpha_{22},\ldots, &\alpha_{2p};\\&\beta_{21},&\beta_{22},\ldots,&\beta_{2q};&\end{matrix}z_2 \right]e_2\nonumber\\&=\sum_{s=0}^{m}\frac{\binom{m}{s}\prod\limits_{i=2}^{p}{(\alpha_{1i})}_s}{\prod\limits_{j=1}^{q}{(\beta_{1j})}_s}(-z_1)^s{}_{p} {F}_{q}\left[\begin{matrix}&\alpha_{11}, &\alpha_{12}+s,\ldots, &\alpha_{1p}+s;\\&\beta_{11}+s,&\beta_{12}+s,\ldots,&\beta_{1q}+s;&\end{matrix}z_1 \right]e_1\nonumber\\&\quad+\sum_{t=0}^{n}\frac{\binom{n}{t}\prod\limits_{i=2}^{p}{(\alpha_{2i})}_t}{\prod\limits_{j=1}^{q}{(\beta_{2j})}_t}(-z_2)^t{}_{p} {F}_{q}\left[\begin{matrix}&\alpha_{21}, &\alpha_{22}+t,\ldots, &\alpha_{2p}+t;\\&\beta_{21}+t,&\beta_{22}+t,\ldots,&\beta_{2q}+t;&\end{matrix}z_2 \right]e_2.
\end{align}
Similarly, we obtain
\begin{align}\label{eq:ol}
   {}_{p} F_{q}\left[\begin{matrix}&\alpha_{1}-\overline{M}, &\alpha_{2},\ldots, &\alpha_{p};\\&\beta_{1},&\beta_{2},\ldots,&\beta_{q};&\end{matrix}Z\right]&=\sum_{t=0}^{n}\frac{\binom{n}{t}\prod\limits_{i=2}^{p}{(\alpha_{1i})}_t}{\prod\limits_{j=1}^{q}{(\beta_{1j})}_t}(-z_1)^t{}_{p} {F}_{q}\left[\begin{matrix}&\alpha_{11}, &\alpha_{12}+t,\ldots, &\alpha_{1p}+t;\\&\beta_{11}+t,&\beta_{12}+t,\ldots,&\beta_{1q}+t;&\end{matrix}z_1 \right]\nonumber e_1\\&\quad+\sum_{s=0}^{m}\frac{\binom{m}{s}\prod\limits_{i=2}^{p}{(\alpha_{2i})}_s}{\prod\limits_{j=1}^{q}{(\beta_{2j})}_s}(-z_2)^s{}_{p} {F}_{q}\left[\begin{matrix}&\alpha_{21}, &\alpha_{22}+s,\ldots, &\alpha_{2p}+s;\\&\beta_{21}+s,&\beta_{22}+s,\ldots,&\beta_{2q}+s;&\end{matrix}z_2 \right]e_2.
\end{align}
Combining \eqref{eq:jl} and \eqref{eq:ol}, we get
\begin{align*}
   & {}_{p} F_{q}\left[\begin{matrix}&\alpha_{1}-M, &\alpha_{2},\ldots, &\alpha_{p};\\&\beta_{1},&\beta_{2},\ldots,&\beta_{q};&\end{matrix}Z \right]+{}_{p} F_{q}\left[\begin{matrix}&\alpha_{1}-\overline{M}, &\alpha_{2},\ldots, &\alpha_{p};\\&\beta_{1},&\beta_{2},\ldots,&\beta_{q};&\end{matrix}Z \right]\\&=\sum_{s=0}^{m}\frac{\binom{m}{s}\prod\limits_{i=2}^{p}{(\alpha_{i})}_s}{\prod\limits_{j=1}^{q}{(\beta_{j})}_s}(-Z)^s{}_{p} {F}_{q}\left[\begin{matrix}&\alpha_{1}, &\alpha_{2}+s,\ldots, &\alpha_{p}+s;\\&\beta_{1}+s,&\beta_{2}+s,\ldots,&\beta_{q}+s;&\end{matrix}Z \right]\\& \quad +\sum_{t=0}^{n}\frac{\binom{n}{t}\prod\limits_{i=2}^{p}{(\alpha_{i})}_t}{\prod\limits_{j=1}^{q}{(\beta_{j})}_t}(-Z)^t{}_{p} {F}_{q}\left[\begin{matrix}&\alpha_{1}, &\alpha_{2}+t,\ldots, &\alpha_{p}+t;\\&\beta_{1}+t,&\beta_{2}+t,\ldots,&\beta_{q}+t;&\end{matrix}Z \right].
\end{align*}
Hence the proof is completed.
\end{proof}
\begin{theo}\label{eq:th3}
    Let $M=me_1+ne_2\in{\mathbb{BC}}$, where $m,n\in{\mathbb{Z}^+}$. The following contiguous relation of the bicomplex generalized hypergeometric function hold true.
    \begin{align*}
         & {}_{p} F_{q}\left[\begin{matrix}&\alpha_1, &\alpha_2,\ldots, &\alpha_{p};\\&\beta_1-M,&\beta_2,\dots,&\beta_{q};&\end{matrix}Z\right]+{}_{p} F_{q}\left[\begin{matrix}&\alpha_1, &\alpha_2,\ldots, &\alpha_{p};\\&\beta_1-\overline{M},&\beta_2,\ldots,&\beta_{q};&\end{matrix}Z\right]\\
   & =(\beta_{1}-m-1)!\sum_{s=0}^{m}\frac{\binom{m}{s}}{(\beta_{1}+s-m-1)!}\frac{\prod\limits_{i=1}^{p}{(\alpha_i)}_s}{\prod\limits_{j=1}^{q}{(\beta_j)}_s}Z^s{}_{p} {F}_{q}\left[\begin{matrix}&\alpha_{1}+s, &\alpha_{2}+s,\ldots, &\alpha_{p}+s;\\&\beta_{1}+s,&\beta_{2}+s,\ldots,&\beta_{q}+s;&\end{matrix}Z \right]\\&\quad+(\beta_{1}-n-1)!\sum_{t=0}^{n}\frac{\binom{n}{t}}{(\beta_{1}+t-n-1)!}\frac{\prod\limits_{i=1}^{p}{(\alpha_i)}_t}{\prod\limits_{j=1}^{q}{(\beta_j)}_t}Z^t{}_{p} {F}_{q}\left[\begin{matrix}&\alpha_{1}+t, &\alpha_{2}+t,\ldots, &\alpha_{p}+t;\\&\beta_{1}+t,&\beta_{2}+t,\ldots,&\beta_{q}+t;&\end{matrix}Z \right].
    \end{align*}
\end{theo}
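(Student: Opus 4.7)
The plan is to mirror the strategy used in Theorem \ref{eq:th1}, but with the parameter $\beta_1$ decremented instead of $\alpha_1$ incremented. First I would establish the scalar base relation for the classical generalized hypergeometric series. Starting from the power-series definition and using
\[
\frac{1}{(\beta_{11}-1)_k}=\frac{1}{(\beta_{11})_k}+\frac{k}{(\beta_{11}-1)(\beta_{11})_k},
\]
I would obtain
\[
{}_{p}F_{q}\!\left[\begin{matrix}\alpha_{11},\ldots,\alpha_{1p};\\ \beta_{11}-1,\beta_{12},\ldots,\beta_{1q};\end{matrix}z_1\right]={}_{p}F_{q}[\,\cdot\,;\beta_{11},\ldots;z_1]+\frac{z_1}{\beta_{11}-1}\,{}_{p}F'_{q}[\,\cdot\,;\beta_{11},\ldots;z_1].
\]

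Next, I would iterate this identity and prove by induction on $m\in\mathbb{Z}^{+}$ the scalar formula
\[
{}_{p}F_{q}[\,\cdot\,;\beta_{11}-m,\ldots;z_1]=(\beta_{11}-m-1)!\sum_{s=0}^{m}\frac{\binom{m}{s}}{(\beta_{11}+s-m-1)!}z_{1}^{s}\,{}_{p}F_{q}^{(s)}[\,\cdot\,;\beta_{11},\ldots;z_1],
\]
exactly parallel to \eqref{eq:wlk}. The inductive step combines the $m=1$ relation (applied with $\beta_{11}$ replaced by $\beta_{11}-1$) with the $m-1$ case, and the coefficient matching is settled by the Pascal-type identity $\binom{m-1}{s-1}+\binom{m-1}{s}=\binom{m}{s}$ together with a short manipulation of the factorials $(\beta_{11}+s-m-1)!/(\beta_{11}+s-m)!$. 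Then applying the differential relation from Section 5, $\,{}_{p}F_{q}^{(s)}={\prod(\alpha_{1i})_s}/{\prod(\beta_{1j})_s}\cdot{}_{p}F_{q}[\alpha+s;\beta+s;z_1]$, converts the derivative into the shifted hypergeometric function and introduces the Pochhammer ratio appearing in the target identity.

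With the scalar version secured, I would lift to the bicomplex setting using \eqref{eq:ab}. Writing $M=me_1+ne_2$ and $\overline{M}=ne_1+me_2$, the idempotent decomposition gives
\begin{align*}
&{}_{p}F_{q}[\alpha;\beta_1-M,\ldots;Z]+{}_{p}F_{q}[\alpha;\beta_1-\overline{M},\ldots;Z]\\
&\quad=\bigl({}_{p}F_{q}[\,\cdot\,;\beta_{11}-m,\ldots;z_1]+{}_{p}F_{q}[\,\cdot\,;\beta_{11}-n,\ldots;z_1]\bigr)e_1\\
&\qquad+\bigl({}_{p}F_{q}[\,\cdot\,;\beta_{21}-n,\ldots;z_2]+{}_{p}F_{q}[\,\cdot\,;\beta_{21}-m,\ldots;z_2]\bigr)e_2.
\end{align*}
Substituting the scalar identity in each component and recombining via $\Gamma(\beta_1-m)=(\beta_{11}-m-1)!e_1+(\beta_{21}-m-1)!e_2$, and likewise for $(\beta_1-n-1)!$, reassembles the right-hand side of the theorem.

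The main obstacle is the inductive verification in step 2, specifically the factorial bookkeeping that must reconcile $\binom{m-1}{s}/(\beta_{11}+s-m)!$ and $\binom{m-1}{s-1}/(\beta_{11}+s-m)!$ with the unified coefficient $\binom{m}{s}/(\beta_{11}+s-m-1)!$ after factoring out $(\beta_{11}-m-1)!$. Once that algebraic identity is in place, steps 3--5 are routine and entirely analogous to the corresponding passage in the proof of Theorem \ref{eq:th1}.
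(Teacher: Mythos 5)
Your proposal follows essentially the same route as the paper: derive the scalar recurrence for $\beta_{11}\mapsto\beta_{11}-1$ (the paper's \eqref{eq:xd}), iterate and induct to obtain the derivative-expansion formula \eqref{eq:iu}, lift componentwise via the idempotent representation \eqref{eq:ab}, add the $M$ and $\overline{M}$ versions, and finally convert the derivatives into shifted ${}_pF_q$'s using the differentiation relation. The only difference is that you spell out the Pascal-identity bookkeeping for the induction, which the paper leaves as ``easily shown''; the argument is correct.
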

\begin{proof}
   This proof of this theorem is similar to the proof of Theorem \ref{eq:th1}. Let us derived recurrence  relation of the generalized hypergeometric function,
\begin{align}\label{eq:xd}
     {}_{p} F_{q}\left[\begin{matrix}&\alpha_{11}, &\alpha_{12},\ldots, &\alpha_{1p};\\&\beta_{11}-1,&\beta_{12},\ldots,&\beta_{1q};&\end{matrix}z_1 \right]&=\sum_{k=0}^{\infty} \frac{\prod\limits_{i=1}^{p} {(\alpha_{1i})}_k}{\prod\limits_{j=1}^{q} {(\beta_{1j})}_k}\left(\frac{\beta_{11}+k-1}{\beta_{11}-1}\right)\frac{z_1^k}{k!}\nonumber\\
      &={}_{p} F_{q}\left[\begin{matrix}&\alpha_{11}, &\alpha_{12},.... &\alpha_{1p};\\&\beta_{11},&\beta_{12},.....&\beta_{1q};&\end{matrix}z_1 \right]+ \frac{z_1}{\beta_{11}-1}{}_{p} F'_{q}\left[\begin{matrix}&\alpha_{11}, &\alpha_{12},\ldots, &\alpha_{1p};\\&\beta_{11},&\beta_{12},\ldots,&\beta_{1q};&\end{matrix}z_1 \right].
\end{align}
Setting $\beta_{11}$ by $(\beta_{11}-1)$ in the above relation \eqref{eq:xd}, we get
\begin{align*}
   {}_{p} F_{q}\left[\begin{matrix}&\alpha_{11}, &\alpha_{12},\ldots, &\alpha_{1p};\\&\beta_{11}-2,&\beta_{12},\dots,&\beta_{1q};&\end{matrix}z_1 \right]&= {}_{p} F_{q}\left[\begin{matrix}&\alpha_{11}, &\alpha_{12},\ldots, &\alpha_{1p};\\&\beta_{11},&\beta_{12},\ldots,&\beta_{1q};&\end{matrix}z_1 \right]+ \frac{2z_1}{\beta_{11}-2}{}_{p} F'_{q}\left[\begin{matrix}&\alpha_{11}, &\alpha_{12},\ldots, &\alpha_{1p};\\&\beta_{11},&\beta_{12},\ldots,&\beta_{1q};&\end{matrix}z_1 \right]\\&\quad+\frac{z_1^2}{(\beta_{11}-1)(\beta_{11}-2)}{}_{p} F''_{q}\left[\begin{matrix}&\alpha_{11}, &\alpha_{12},\ldots, &\alpha_{1p};\\&\beta_{11},&\beta_{12},\ldots,&\beta_{1q};&\end{matrix}z_1 \right].
\end{align*}
 By mathematical induction, we can easily show the generalized hypergeometric function satisfies recurrence relation
\begin{align}\label{eq:iu}
  & {}_{p} F_{q}\left[\begin{matrix}&\alpha_{11}, &\alpha_{12},\ldots, &\alpha_{1p};\\&\beta_{11}-m,&\beta_{12},\ldots,&\beta_{1q};&\end{matrix}z_1 \right]\nonumber\\&=(\beta_{11}-m-1)!\sum_{s=0}^{m}\frac{\binom{m}{s}}{(\beta_{11}+s-m-1)!}z_1^s{}_{p} {F}_{q}^s\left[\begin{matrix}&\alpha_{11}, &\alpha_{12},\ldots, &\alpha_{1p};\\&\beta_{11},&\beta_{12},\ldots,&\beta_{1q};&\end{matrix}z_1 \right].
\end{align}
Using \eqref{eq:ab} and \eqref{eq:iu}, we have
\begin{align}\label{eq:kj}
  &{}_{p} F_{q}\left[\begin{matrix}&\alpha_1, &\alpha_2,\ldots, &\alpha_{p};\\&\beta_1-M,&\beta_2,\ldots,&\beta_{q};&\end{matrix}Z\right]\nonumber\\&= {}_{p} F_{q}\left[\begin{matrix}&\alpha_{11}, &\alpha_{12},\ldots, &\alpha_{1p};\\&\beta_{11}-m,&\beta_{12},\ldots,&\beta_{1q};&\end{matrix}z_1 \right]e_1 +{}_{p} F_{q}\left[\begin{matrix}&\alpha_{21}, &\alpha_{22},\ldots, &\alpha_{2p};\\&\beta_{21}-n,&\beta_{22},\ldots,&\beta_{2q};&\end{matrix}z_2 \right]e_2\nonumber\\&=(\beta_{11}-m-1)!\sum_{s=0}^{m}\frac{\binom{m}{s}}{(\beta_{11}+s-m-1)!}z_1^s{}_{p} {F}_{q}^s\left[\begin{matrix}&\alpha_{11}, &\alpha_{12},\ldots, &\alpha_{1p};\\&\beta_{11},&\beta_{12},\ldots,&\beta_{1q};&\end{matrix}z_1 \right]e_1\nonumber\\&\quad+(\beta_{21}-n-1)!\sum_{t=0}^{n}\frac{\binom{n}{t}}{(\beta_{21}+t-n-1)!}z_2^t{}_{p} {F}_{q}^t\left[\begin{matrix}&\alpha_{21}, &\alpha_{22},\ldots, &\alpha_{2p};\\&\beta_{21},&\beta_{22},\ldots,&\beta_{2q};&\end{matrix}z_2 \right]e_2.
\end{align}
Substituting $M$ by $\overline{M}$ in \eqref{eq:kj}, we get
\begin{align}\label{eq:fh}
     &{}_{p} F_{q}\left[\begin{matrix}&\alpha_1, &\alpha_2,\ldots, &\alpha_{p};\\&\beta_1-\overline{M},&\beta_2,\ldots,&\beta_{q};&\end{matrix}Z\right]\nonumber\\&= (\beta_{11}-n-1)!\sum_{t=0}^{n}\frac{\binom{n}{t}}{(\beta_{11}+t-n-1)!}z_1^t{}_{p} {F^t}_{q}\left[\begin{matrix}&\alpha_{11}, &\alpha_{12},\ldots, &\alpha_{1p};\\&\beta_{11},&\beta_{12},\ldots,&\beta_{1q};&\end{matrix}z_1 \right]e_1\nonumber\\&\quad+(\beta_{21}-m-1)!\sum_{s=0}^{m}\frac{\binom{m}{s}}{(\beta_{21}+s-m-1)!}z_2^s{}_{p} {F}_{q}^s\left[\begin{matrix}&\alpha_{21}, &\alpha_{22},\ldots, &\alpha_{2p};\\&\beta_{21},&\beta_{22},\ldots,&\beta_{2q};&\end{matrix}z_2 \right]e_2.
\end{align}
Using \eqref{eq:kj} and \eqref{eq:fh}, we obtain
\begin{align*}
    &{}_{p} F_{q}\left[\begin{matrix}&\alpha_1, &\alpha_2,\ldots, &\alpha_{p};\\&\beta_1-M,&\beta_2,\ldots,&\beta_{q};&\end{matrix}Z\right]+ {}_{p} F_{q}\left[\begin{matrix}&\alpha_1, &\alpha_2,\ldots, &\alpha_{p};\\&\beta_1-\overline{M},&\beta_2,\ldots,&\beta_{q};&\end{matrix}Z\right]\\&=(\beta_{1}-m-1)!\sum_{s=0}^{m}\frac{\binom{m}{s}}{(\beta_{1}+s-m-1)!}Z^s{}_{p} {F}_{q}^s\left[\begin{matrix}&\alpha_{1}, &\alpha_{2},\ldots, &\alpha_{p};\\&\beta_{1},&\beta_{2},\ldots,&\beta_{q};&\end{matrix}Z \right]\\&\quad+(\beta_{1}-n-1)!\sum_{t=0}^{n}\frac{\binom{n}{t}}{(\beta_{1}+t-n-1)!}Z^t{}_{p} {F}_{q}^t\left[\begin{matrix}&\alpha_{1}, &\alpha_{2},\ldots, &\alpha_{p};\\&\beta_{1},&\beta_{2},\ldots,&\beta_{q};&\end{matrix}Z \right]\\
   & =(\beta_{1}-m-1)!\sum_{s=0}^{m}\frac{\binom{m}{s}}{(\beta_{1}+s-m-1)!}\frac{\prod\limits_{i=1}^{p}{(\alpha_i)}_s}{\prod\limits_{j=1}^{q}{(\beta_j)}_s}Z^s{}_{p} {F}_{q}\left[\begin{matrix}&\alpha_{1}+s, &\alpha_{2}+s,\ldots, &\alpha_{p}+s;\\&\beta_{1}+s,&\beta_{2}+s,\ldots,&\beta_{q}+s;&\end{matrix}Z \right]\\&\quad+(\beta_{1}-n-1)!\sum_{t=0}^{n}\frac{\binom{n}{t}}{(\beta_{1}+t-n-1)!}\frac{\prod\limits_{i=1}^{p}{(\alpha_i)}_t}{\prod\limits_{j=1}^{q}{(\beta_j)}_t}Z^t{}_{p} {F}_{q}\left[\begin{matrix}&\alpha_{1}+t, &\alpha_{2}+t,\ldots, &\alpha_{p}+t;\\&\beta_{1}+t,&\beta_{2}+t,\ldots,&\beta_{q}+t;&\end{matrix}Z \right],
\end{align*}
which completes the proof of this theorem.
\end{proof}
\begin{theo}
    If  $M=me_1+ne_2\in{\mathbb{BC}}$, where $m,n\in{\mathbb{Z}^+}$ then another contiguous relation of the bicomplex generalized hypergeometric functions is given by
    \begin{align*}
     &{}_{p} F_{q}\left[\begin{matrix}&\alpha_1, &\alpha_2,\ldots, &\alpha_{p};\\&\beta_1+M,&\beta_2,\ldots,&\beta_{q};&\end{matrix}Z\right]+{}_{p} F_{q}\left[\begin{matrix}&\alpha_1, &\alpha_2,\ldots, &\alpha_{p};\\&\beta_1+\overline{M},&\beta_2,\ldots,&\beta_{q};&\end{matrix}Z\right]\\&=2 {}_{p} F_{q}\left[\begin{matrix}&\alpha_1, &\alpha_2,\ldots,&\alpha_{p};\\&\beta_1,&\beta_2,\ldots,&\beta_{q};&\end{matrix}Z\right]-Z\sum_{s=0}^{m}\frac{\prod\limits_{i=1}^{p}\alpha_{i}}{{(\beta_{1}+s-1)}_2 \prod\limits_{j=2}^{q}\beta_{j}}{}_{p} {F}_{q}\left[\begin{matrix}&\alpha_{1}+1, &\alpha_{2}+1,\ldots, &\alpha_{p}+1;\\&\beta_{1}+s+1,&\beta_{2}+1,\ldots,&\beta_{q}+1;&\end{matrix}Z \right]\\&\quad-Z\sum_{t=0}^{n}\frac{\prod\limits_{i=1}^{p}\alpha_{i}}{{(\beta_{1}+t-1)}_2 \prod\limits_{j=2}^{q}\beta_{j}}{}_{p} {F}_{q}\left[\begin{matrix}&\alpha_{1}+1, &\alpha_{2}+1,\ldots, &\alpha_{p}+1;\\&\beta_{1}+t+1,&\beta_{2}+1,\ldots,&\beta_{q}+1;&\end{matrix}Z \right].
\end{align*}
\end{theo}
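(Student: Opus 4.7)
The proof mirrors Theorems \ref{eq:th1} and \ref{eq:th3}: first establish the complex-variable contiguous relation for an upward shift of $\beta_{11}$, iterate to get the $\beta_{11}+m$ version by induction, assemble via the idempotent decomposition \eqref{eq:ab}, and finally add the $M$- and $\overline{M}$-identities. The plan is to start from the series expansion
\begin{align*}
{}_{p}F_{q}\!\left[\begin{matrix}\alpha_{11},\ldots,\alpha_{1p};\\ \beta_{11}+1,\beta_{12},\ldots,\beta_{1q};\end{matrix}z_1\right]
=\sum_{k=0}^{\infty}\frac{\prod_{i=1}^{p}(\alpha_{1i})_k}{(\beta_{11}+1)_k\prod_{j=2}^{q}(\beta_{1j})_k}\,\frac{z_1^{\,k}}{k!}
\end{align*}
and exploit the Pochhammer identity
$\tfrac{1}{(\beta_{11}+1)_k}=\tfrac{1}{(\beta_{11})_k}\bigl(1-\tfrac{k}{\beta_{11}+k}\bigr)$. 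This splits the sum into ${}_{p}F_{q}[\beta_{11};z_1]$ minus a correction series; pulling out $z_1$, shifting $k\mapsto k+1$, and using $(\alpha_{1i})_{k+1}=\alpha_{1i}(\alpha_{1i}+1)_k$ together with $(\beta_{1j})_{k+1}=\beta_{1j}(\beta_{1j}+1)_k$ rewrites the correction as a single parameter-shifted ${}_{p}F_{q}$ with $\beta_{11}\to\beta_{11}+2$ and every other $\alpha,\beta$ advanced by one. This is the base case $m=1$ of the scalar-variable identity.

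Next, I would iterate this base identity and run an induction on $m$ (exactly paralleling the induction in the proof of Theorem \ref{eq:th3}) to arrive at
\begin{align*}
{}_{p}F_{q}\!\left[\begin{matrix}\alpha_{11},\ldots,\alpha_{1p};\\ \beta_{11}+m,\beta_{12},\ldots,\beta_{1q};\end{matrix}z_1\right]
&={}_{p}F_{q}\!\left[\begin{matrix}\alpha_{11},\ldots,\alpha_{1p};\\ \beta_{11},\beta_{12},\ldots,\beta_{1q};\end{matrix}z_1\right]\\
&\quad-z_1\sum_{s}\frac{\prod_{i=1}^{p}\alpha_{1i}}{(\beta_{11}+s-1)_2\prod_{j=2}^{q}\beta_{1j}}\,{}_{p}F_{q}\!\left[\begin{matrix}\alpha_{11}+1,\ldots,\alpha_{1p}+1;\\ \beta_{11}+s+1,\beta_{12}+1,\ldots,\beta_{1q}+1;\end{matrix}z_1\right],
\end{align*}
and the analogous formula with $m$ replaced by $n$ and all subscripts $1$ replaced by $2$ (so the scalar series is in $z_2$). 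The telescoping of denominators in each iteration produces the claimed Pochhammer factor $(\beta_{11}+s-1)_2=(\beta_{11}+s-1)(\beta_{11}+s)$.

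Having the two scalar-variable formulas, I would apply the idempotent decomposition \eqref{eq:ab} to bundle them into the bicomplex identity for ${}_{p}F_{q}[\beta_1+M;Z]$. Replacing $M$ by $\overline{M}$ simply swaps $m$ and $n$ between the $e_1$- and $e_2$-components, yielding the companion identity for ${}_{p}F_{q}[\beta_1+\overline{M};Z]$. Adding the two identities, the $F[\beta_1;Z]$-term contributes $2\cdot{}_{p}F_{q}[\beta_1;Z]$ and the two correction sums coalesce into the claimed pair of $s$- and $t$-sums with a common bicomplex factor $Z$, using $z_1e_1+z_2e_2=Z$ and the identity $A e_1+B e_2\cdot e_1+B e_1+A e_2\cdot e_2$ collapsing mixed terms.

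The principal obstacle is the inductive step: keeping track of the way the prefactor $\prod\alpha_{1i}/\bigl[(\beta_{11}+s-1)_2\prod_{j\ge 2}\beta_{1j}\bigr]$ accumulates when the base shift is applied repeatedly, since each iterate both shifts the inner ${}_{p}F_{q}$ and increments the outer $\beta_{11}$ inside the Pochhammer denominator. Careful re-indexing of the resulting double sums is needed to recognize them as the single $s$-sum in the claim. Once that bookkeeping is in place, the idempotent assembly and the $M\leftrightarrow\overline{M}$ symmetrization are routine.
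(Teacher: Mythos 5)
Your proposal is correct and follows essentially the same route as the paper: establish the single-step relation ${}_{p}F_{q}[\beta_{11}+1]={}_{p}F_{q}[\beta_{11}]-\frac{z_1\prod_i\alpha_{1i}}{\beta_{11}(\beta_{11}+1)\prod_{j\ge2}\beta_{1j}}\,{}_{p}F_{q}[\alpha+1;\beta_{11}+2,\beta+1]$ (the paper gets it by shifting its earlier $\beta_{11}-1$ recurrence and invoking the derivative formula, while you split $1/(\beta_{11}+1)_k$ directly --- the same identity either way), induct on $m$, assemble by idempotents, and symmetrize $M\leftrightarrow\overline{M}$. Two small remarks: the iteration telescopes into a single sum (each step contributes exactly one new term, so no double-sum re-indexing is actually needed), and the correction sums naturally begin at $s=1$ and $t=1$ as in the paper's own proof, rather than at $0$ as printed in the theorem statement.
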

\begin{proof}
    A contiguous relation of the generalized hypergeometric function can be obtained by replacing $\beta_{11}$ by $\beta_{11}+1$ in \eqref{eq:xd} as follows :
\begin{align}\label{eq:bnm}
     &{}_{p} F_{q}\left[\begin{matrix}&\alpha_{11}, &\alpha_{12},\ldots, &\alpha_{1p};\\&\beta_{11}+1,&\beta_{12},\ldots,&\beta_{1q};&\end{matrix}z_1 \right]= {}_{p} F_{q}\left[\begin{matrix}&\alpha_{11}, &\alpha_{12},\ldots, &\alpha_{1p};\\&\beta_{11},&\beta_{12},\ldots,&\beta_{1q};&\end{matrix}z_1 \right]\nonumber\\
    &\hspace{40pt}-\frac{z_1\prod\limits_{i=1}^{p}\alpha_{1i}}{\beta_{11}(\beta_{11}+1)\prod\limits_{j=2}^{q}\beta_{1j}}{}_{p} F_{q}\left[\begin{matrix}&\alpha_{11}+1, &\alpha_{12}+1,\ldots, &\alpha_{1p}+1;\\&\beta_{11}+2,&\beta_{12}+1,\ldots,&\beta_{1q}+1;&\end{matrix}z_1 \right].
\end{align}
Substituting $\beta_{11}$ by $\beta_{11}+1$ in \eqref{eq:bnm}, we have
\begin{align*}
     &{}_{p} F_{q}\left[\begin{matrix}&\alpha_{11}, &\alpha_{12},\ldots, &\alpha_{1p};\\&\beta_{11}+2,&\beta_{12},\ldots,&\beta_{1q};&\end{matrix}z_1 \right]= {}_{p} F_{q}\left[\begin{matrix}&\alpha_{11}, &\alpha_{12},\ldots, &\alpha_{1p};\\&\beta_{11}+1,&\beta_{12},\ldots,&\beta_{1q};&\end{matrix}z_1 \right]\\&\hspace{40pt}-\frac{z_1\prod\limits_{i=1}^{p}\alpha_{1i}}{(\beta_{11}+1)(\beta_{11}+2)\prod\limits_{j=2}^{q}\beta_{1j}}{}_{p} F_{q}\left[\begin{matrix}&\alpha_{11}+1, &\alpha_{12}+1,\ldots, &\alpha_{1p}+1;\\&\beta_{11}+3,&\beta_{12}+1,\ldots,&\beta_{1q}+1;&\end{matrix}z_1 \right].  \end{align*}
By mathematical induction, we can easily show the generalized hypergeometric function satisfies recurrence relation
\begin{align}\label{eq:ijn}
     &{}_{p} F_{q}\left[\begin{matrix}&\alpha_{11}, &\alpha_{12},\ldots, &\alpha_{1p};\\&\beta_{11}+m,&\beta_{12},\ldots,&\beta_{1q};&\end{matrix}z_1 \right]= {}_{p} F_{q}\left[\begin{matrix}&\alpha_{11}, &\alpha_{12},\ldots, &\alpha_{1p};\\&\beta_{11},&\beta_{12},\ldots,&\beta_{1q};&\end{matrix}z_1 \right]\nonumber\\&\hspace{40pt}-z_1\sum_{s=1}^{m}\frac{\prod\limits_{i=1}^{p}\alpha_{1i}}{{(\beta_{11}+s-1)}_2 \prod\limits_{j=2}^{q}\beta_{1j}}{}_{p} {F}_{q}\left[\begin{matrix}&\alpha_{11}+1, &\alpha_{12}+1,\ldots, &\alpha_{1p}+1;\\&\beta_{11}+s+1,&\beta_{12}+1,\ldots,&\beta_{1q}+1;&\end{matrix}z_1 \right].
\end{align}
Using \eqref{eq:ab} and \eqref{eq:ijn}, we get
\begin{align}\label{eq:vb}
    &{}_{p} F_{q}\left[\begin{matrix}&\alpha_1, &\alpha_2,\ldots, &\alpha_{p};\\&\beta_1+M,&\beta_2,\ldots,&\beta_{q};&\end{matrix}Z\right]=
    {}_{p} F_{q}\left[\begin{matrix}&\alpha_1, &\alpha_2,\ldots,&\alpha_{p};\\&\beta_1,&\beta_2,\ldots,&\beta_{q};&\end{matrix}Z\right]\nonumber\\&\hspace{40pt}-z_1\sum_{s=1}^{m}\frac{\prod\limits_{i=1}^{p}\alpha_{1i}}{{(\beta_{11}+s-1)}_2 \prod\limits_{j=2}^{q}\beta_{1j}}{}_{p} {F}_{q}\left[\begin{matrix}&\alpha_{11}+1, &\alpha_{12}+1,\ldots, &\alpha_{1p}+1;\\&\beta_{11}+s+1,&\beta_{12}+1,\ldots,&\beta_{1q}+1;&\end{matrix}z_1 \right]e_1\nonumber\\&\hspace{40pt}-z_2\sum_{t=1}^{n}\frac{\prod\limits_{i=1}^{p}\alpha_{2i}}{{(\beta_{21}+t-1)}_2 \prod\limits_{j=2}^{q}\beta_{1j}}{}_{p} {F}_{q}\left[\begin{matrix}&\alpha_{21}+1, &\alpha_{22}+1,\ldots, &\alpha_{2p}+1;\\&\beta_{21}+t+1,&\beta_{22}+1,\ldots,&\beta_{2q}+1;&\end{matrix}z_2 \right]e_2.
\end{align}
Similarly, we have
\begin{align}\label{eq:hn}
    &{}_{p} F_{q}\left[\begin{matrix}&\alpha_1, &\alpha_2,\ldots, &\alpha_{p};\\&\beta_1+\overline{M},&\beta_2,\ldots,&\beta_{q};&\end{matrix}Z\right]=
    {}_{p} F_{q}\left[\begin{matrix}&\alpha_1, &\alpha_2,\ldots, &\alpha_{p};\\&\beta_1,&\beta_2,\ldots,&\beta_{q};&\end{matrix}Z\right]\nonumber\\&\hspace{40pt}-z_1\sum_{t=1}^{n}\frac{\prod\limits_{i=1}^{p}\alpha_{1i}}{{(\beta_{11}+t-1)}_2 \prod\limits_{j=2}^{q}\beta_{1j}}{}_{p} {F}_{q}\left[\begin{matrix}&\alpha_{11}+1, &\alpha_{12}+1,\ldots, &\alpha_{1p}+1;\\&\beta_{11}+t+1,&\beta_{12}+1,\ldots,&\beta_{1q}+1;&\end{matrix}z_1 \right]e_1\nonumber\\&\hspace{40pt}-z_2\sum_{s=1}^{m}\frac{\prod\limits_{i=1}^{p}\alpha_{2i}}{{(\beta_{21}+s-1)}_2 \prod\limits_{j=2}^{q}\beta_{2j}}{}_{p} {F}_{q}\left[\begin{matrix}&\alpha_{21}+1, &\alpha_{22}+1,\ldots, &\alpha_{2p}+1;\\&\beta_{21}+s+1,&\beta_{22}+1,\ldots,&\beta_{2q}+1;&\end{matrix}z_2 \right]e_2.
\end{align}
Combining \eqref{eq:vb} and \eqref{eq:hn}, we obtain
\begin{align*}
     &{}_{p} F_{q}\left[\begin{matrix}&\alpha_1, &\alpha_2,\ldots, &\alpha_{p};\\&\beta_1+M,&\beta_2,\ldots,&\beta_{q};&\end{matrix}Z\right]+{}_{p} F_{q}\left[\begin{matrix}&\alpha_1, &\alpha_2,\ldots, &\alpha_{p};\\&\beta_1+\overline{M},&\beta_2,\ldots,&\beta_{q};&\end{matrix}Z\right]\\&=2 {}_{p} F_{q}\left[\begin{matrix}&\alpha_1, &\alpha_2,\ldots, &\alpha_{p};\\&\beta_1,&\beta_2,\ldots,&\beta_{q};&\end{matrix}Z\right]-Z\sum_{s=1}^{m}\frac{\prod\limits_{i=1}^{p}\alpha_{i}}{{(\beta_{1}+s-1)}_2 \prod\limits_{j=2}^{q}\beta_{j}}{}_{p} {F}_{q}\left[\begin{matrix}&\alpha_{1}+1, &\alpha_{2}+1,\ldots, &\alpha_{p}+1;\\&\beta_{1}+s+1,&\beta_{2}+1,\ldots,&\beta_{q}+1;&\end{matrix}Z \right]\\&\quad-Z\sum_{t=1}^{n}\frac{\prod\limits_{i=1}^{p}\alpha_{i}}{{(\beta_{1}+t-1)} _2 \prod\limits_{j=2}^{q}\beta_{j}}{}_{p} {F}_{q}\left[\begin{matrix}&\alpha_{1}+1, &\alpha_{2}+1,\ldots, &\alpha_{p}+1;\\&\beta_{1}+t+1,&\beta_{2}+1,\ldots,&\beta_{q}+1;&\end{matrix}Z \right].
\end{align*}
Hence the proof is completed.
\end{proof}
\section{Bicomplex generalized hypergeometric differential equation}
In this section, we derive a differential equation which satisfies bicomplex generalized hypergeometric function. Moreover, we establish the differential equation for specific value $p$ and $q$ whose solution is bicomplex confluent hypergeometric function and bicomplex hypergeometric function.
\begin{theo}\label{eq:th7}
    Bicomplex generalized hypergeometric function $Y(Z)={}_{p} F_{q}\left[\begin{matrix}&\alpha_1, &\alpha_2,\ldots, &\alpha_{p};\\&\beta_1,&\beta_2,\ldots,&\beta_{q};&\end{matrix}Z\right]$ is a solution of the following differential equation :
    \begin{align}\label{eq:jkl}
        \frac{d}{dZ} \prod\limits_{j=1}^{q}\left(Z\frac{d}{dZ}+\beta_j-1\right)Y(Z)-\prod\limits_{i=1}^{p}\left(Z\frac{d}{dZ}+\alpha_i\right)Y(Z)=0.
    \end{align}
\end{theo}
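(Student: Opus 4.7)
The plan is to reduce the bicomplex differential equation to two copies of the classical generalized hypergeometric differential equation by means of the idempotent decomposition \eqref{eq:ab}. First I would write
$$Y(Z) \;=\; Y_1(z_1)\,e_1 \,+\, Y_2(z_2)\,e_2, \qquad Y_k(z_k):={}_pF_q\!\left[\begin{matrix}\alpha_{k1},\ldots,\alpha_{kp};\\ \beta_{k1},\ldots,\beta_{kq};\end{matrix}z_k\right],$$
so that each $Y_k$ is a classical (complex) generalized hypergeometric function. The standard theory (see \cite{sp_function}) tells us that $Y_k$ satisfies
$$\frac{d}{dz_k}\prod_{j=1}^{q}\!\left(z_k\frac{d}{dz_k}+\beta_{kj}-1\right)Y_k(z_k)\;=\;\prod_{i=1}^{p}\!\left(z_k\frac{d}{dz_k}+\alpha_{ki}\right)Y_k(z_k).$$

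Next, I would verify that the Euler operator $\theta:=Z\,d/dZ$ is compatible with the idempotent splitting. If $f(Z)=f_1(z_1)e_1+f_2(z_2)e_2$ is $\mathbb{BC}$-holomorphic, then by the analyticity Theorem recalled in Section 1 the bicomplex derivative acts componentwise, $f'(Z)=f_1'(z_1)e_1+f_2'(z_2)e_2$, so that $\theta f = (z_1 f_1')e_1+(z_2 f_2')e_2$. Combined with the idempotent identities $e_1+e_2=1$, $e_1 e_2=0$, $e_k^2=e_k$, and with the idempotent form of the parameters $\alpha_i=\alpha_{1i}e_1+\alpha_{2i}e_2$, $\beta_j=\beta_{1j}e_1+\beta_{2j}e_2$, this gives
$$(\theta+\alpha_i)f \;=\; \bigl(z_1f_1'+\alpha_{1i}f_1\bigr)e_1 \,+\, \bigl(z_2 f_2'+\alpha_{2i}f_2\bigr)e_2,$$
and analogously for $(\theta+\beta_j-1)$. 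A straightforward induction on the number of factors shows that any composition of such operators acts componentwise on idempotent-split functions.

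Finally, I would apply the bicomplex differential operator
$$L \;:=\; \frac{d}{dZ}\prod_{j=1}^{q}\!\left(\theta+\beta_j-1\right)\,-\,\prod_{i=1}^{p}\!\left(\theta+\alpha_i\right)$$
to $Y(Z)$. By the componentwise action established above, $L\,Y(Z)=(L_1 Y_1)e_1+(L_2 Y_2)e_2$, where $L_k$ is the classical hypergeometric operator with parameters $(\alpha_{k1},\ldots,\alpha_{kp};\beta_{k1},\ldots,\beta_{kq})$. Each classical equation $L_k Y_k=0$ holds, hence $L\,Y(Z)=0$, which is \eqref{eq:jkl}.

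The only real obstacle is the bookkeeping in the second step: writing out carefully why a product of $(p+q+1)$ bicomplex differential operators with idempotent-form coefficients truly splits along $e_1, e_2$ rather than mixing the two components. This is handled by induction using $e_1 e_2=0$ together with the fact that $d/dZ$ preserves idempotent decompositions; once this lemma-style observation is recorded, the conclusion reduces entirely to the known scalar result.
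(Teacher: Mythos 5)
Your proof is correct, but it takes a genuinely different route from the paper. You reduce everything to the idempotent components: writing $Y(Z)=Y_1(z_1)e_1+Y_2(z_2)e_2$ via \eqref{eq:ab}, checking that $d/dZ$, multiplication by $Z$, and multiplication by the idempotent-form parameters all act componentwise (so that any composition of the factors $\theta+\alpha_i$ and $\theta+\beta_j-1$ splits along $e_1,e_2$), and then invoking the classical generalized hypergeometric ODE for each scalar component $Y_k$. The paper instead stays at the bicomplex level and never appeals to the classical ODE: it specializes its own contiguous relations (setting $M=1$ in \eqref{eq:sd} of Theorem \ref{eq:th1} and in \eqref{eq:kj} of Theorem \ref{eq:th3}) to obtain the bicomplex parameter-shift identities $\left(Z\frac{d}{dZ}+\alpha_i\right)Y=\alpha_i\,{}_pF_q(\alpha_i+1;\ldots)$ and $\left(Z\frac{d}{dZ}+\beta_j-1\right)Y=(\beta_j-1)\,{}_pF_q(\ldots;\beta_j-1)$, iterates them, and closes the loop with the bicomplex derivative formula. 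Your argument is shorter and more modular, since it outsources the analytic content to the known scalar result and isolates the one structural fact that matters (operators with idempotent coefficients do not mix the two components, because $e_1e_2=0$); its only cost is that the componentwise-action lemma must be stated and proved carefully, which you correctly identify as the crux. The paper's route is longer but self-contained within the bicomplex framework and showcases the contiguous relations it has already established; it also yields the intermediate factorization identities, which are of independent interest. Both proofs are valid.
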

\begin{proof}
    Setting $M=1$ in \eqref{eq:sd} of Theorem \ref{eq:th1}, we obtain the following differential equation
    \begin{align*}
        \left(Z\frac{d}{dZ}+\alpha_1\right){}_{p} F_{q}\left[\begin{matrix}&\alpha_1, &\alpha_2,\ldots, &\alpha_{p};\\&\beta_1,&\beta_2,\ldots,&\beta_{q};&\end{matrix}Z\right]=(\alpha_1){}_{p} F_{q}\left[\begin{matrix}&\alpha_1+1, &\alpha_2,\ldots,&\alpha_{p};\\&\beta_1,&\beta_2,\ldots,&\beta_{q};&\end{matrix}Z\right],
    \end{align*}
    which implies
    \begin{align*}
      \prod\limits_{i=1}^{p}\left(Z\frac{d}{dZ}+\alpha_i\right){}_{p} F_{q}\left[\begin{matrix}&\alpha_1, &\alpha_2,\ldots, &\alpha_{p};\\&\beta_1,&\beta_2,\ldots,&\beta_{q};&\end{matrix}Z\right]=\prod\limits_{i=1}^{p}(\alpha_i){}_{p} F_{q}\left[\begin{matrix}&\alpha_1+1, &\alpha_2+1,\ldots, &\alpha_{p}+1;\\&\beta_1,&\beta_2,\ldots,&\beta_{q};&\end{matrix}Z\right] .
    \end{align*}
   Setting $M=1$ in \eqref{eq:kj} of Theorem \ref{eq:th3}, we get
   \begin{align*}
     \left(Z\frac{d}{dZ}+\beta_1-1\right){}_{p} F_{q}\left[\begin{matrix}&\alpha_1, &\alpha_2,\ldots, &\alpha_{p};\\&\beta_1,&\beta_2,\ldots,&\beta_{q};&\end{matrix}Z\right]=(\beta_1-1){}_{p} F_{q}\left[\begin{matrix}&\alpha_1, &\alpha_2,\ldots, &\alpha_{p};\\&\beta_1-1,&\beta_2,\ldots,&\beta_{q};&\end{matrix}Z\right],
   \end{align*}
   which yields
   \begin{align}\label{eq:am}
       \prod\limits_{j=1}^{q}\left(Z\frac{d}{dZ}+\beta_j-1\right){}_{p} F_{q}\left[\begin{matrix}&\alpha_1, &\alpha_2,\ldots,&\alpha_{p};\\&\beta_1,&\beta_2,\ldots,&\beta_{q};&\end{matrix}Z\right]=\prod\limits_{j=1}^{q}(\beta_j-1){}_{p} F_{q}\left[\begin{matrix}&\alpha_1, &\alpha_2,\ldots, &\alpha_{p};\\&\beta_1-1,&\beta_2-1,\ldots,&\beta_{q}-1;&\end{matrix}Z\right] .
   \end{align}
   Now differentiate both sides of the equation \eqref{eq:am} we respect to $Z$, we have
   \begin{align*}
      \frac{d}{dZ}\prod\limits_{j=1}^{q}\left(Z\frac{d}{dZ}+\beta_j-1\right)Y(Z)&=\prod\limits_{j=1}^{q}(\beta_j-1){}_{p} F'_{q}\left[\begin{matrix}&\alpha_1, &\alpha_2,\ldots, &\alpha_{p};\\&\beta_1-1,&\beta_2-1,\ldots,&\beta_{q}-1;&\end{matrix}Z\right]\\
       &=\prod\limits_{i=1}^{p}(\alpha_i){}_{p} F_{q}\left[\begin{matrix}&\alpha_1+1, &\alpha_2+1,\ldots, &\alpha_{p}+1;\\&\beta_1,&\beta_2,\ldots,&\beta_{q};&\end{matrix}Z\right] \\
       &=\prod\limits_{i=1}^{p}\left(Z\frac{d}{dZ}+\alpha_i\right)Y(Z),
   \end{align*}
  which completes the proof of the theorem.
\end{proof}
\begin{remark}
    Setting $p=q=1$ in \eqref{eq:jkl}, we obtain bicomplex confluent hypergeometric function $Y(Z)={}_1F_1(\alpha_1;\beta_1;Z)$ satisfies the differential equation
    \begin{align*}
        Z\frac{d^2Y}{dZ^2}+(\beta_1-Z)\frac{dY}{dZ}-\alpha_1Y=0.
    \end{align*}
\end{remark}
\begin{remark}
    Setting $p=2$ and $q=1$ in \eqref{eq:jkl}, we get bicomplex hypergeometric function $y(Z)={}_2F_1(\alpha_1,\alpha_2;\beta_1;Z)$ as a solution of the differential equation
    \begin{align*}
        Z(1-Z)\frac{d^2Y}{dZ^2}+[\beta_1-(\alpha_1+\alpha_2+1)Z]\frac{dY}{dZ}-\alpha_1\alpha_2Y=0.
    \end{align*}
\end{remark}
 \section{Applications of bicomplex generalized hypergeometric functions in coherent states}
Coherent States (CS), dating back to the birth of quantum mechanics, were discovered by E. Schr\"{o}dinger \cite{sch} in course of searching states of quantum harmonic oscillator which are closest possible to their classical counterpart (although he had not used the name `coherent states'). In 1963 R. J. Glauber \cite{Glauber} had extended Schr\"{o}dinger's approach to quantum electrodynamics and used the term 'coherent states'. CSs are nowadays widely applied in different fields in physics like signals and quantum information system, quantum optics etc. There are many significant books and articles on CSs and its applications (among these we refer \cite{B1,B2}).

For the problem of harmonic oscillator \cite{Glauber}, CS are superpositions of the eigenstates $\mid n>$ associated with the number operator $N=B^\dag B$ defined in terms of the annihilation operator $B$ and  creation operator $B^\dag$  satisfying the following commutation relations
\begin{equation}\label{1}
[N,B]=-B, [N,B^\dag]=B^\dag \mbox{ and }[B,B^\dag]=1\nonumber
\end{equation}
and CS are defined as the eigenstates $\mid n>$ of $B$.

Further generalizing the notion of CSs of harmonic oscillator, CS associated a class of nonlinear deformed oscillators \cite{daskal} can be constructed in terms of annihilation and creation operators
 \begin{equation}\label{2}
 A=Bf(N), A^\dag=f(N)B^\dag\nonumber
\end{equation}
satisfying deformed commutation relations
\begin{equation}\label{3}
[N,A]=-A, [N,A^\dag]=A^\dag \mbox{ and }[A,A^\dag]=(N+1)f^2(N+1)-Nf^2(N)\nonumber
\end{equation}
where $f$ is a hermitian operator-valued function of number operator and CS can be defined as the eigenstates of $A$. Following this an interesting category of CS, namely hypergeometric coherent states, were introduced in \cite{AS,Quesne} as a large class of holomorphic eigenstates of suitably defined annihilation operators whose normalization functions can be expressed in terms of generalized hypergeometric functions.

Aiming at the generalizations of the results of \cite{AS} to bicomplex setting, in the following we are investigating the coherent states whose normalization functions are defined in terms of bicomplex generalized hypergeometric functions. First, we define bicomplex generalized hypergeometric states (BGHS) in an infinite dimensional bicomplex Hilbert space \cite{Lav} of the Fock states
\begin{equation}\label{fock}
\mid n>=\sum_{s=1}^{2}\mid n_s> e_s,\quad n_s=0,1,2,...\nonumber
\end{equation}
as
\begin{eqnarray}\label{cs1}
\mid p;q;Z> &\equiv &\mid \alpha_1,...,\alpha_p;\beta_1,...,\beta_q;Z>\nonumber\\
&=& \frac{1}{\sqrt{\mathcal{N}^{(p,q)}(\mid Z\mid_h^2)}}\sum_{n=0}^\infty \frac{Z^n}{\sqrt{\rho^{(p,q)}(n)}}\mid n>
\end{eqnarray}
where the normalization functions are defined in terms of bicomplex generalized hypergeometric functions as follows
\begin{eqnarray}\label{cs3}
  \mathcal{N}^{(p,q)}(\zeta)&=& _{p}F_q(\alpha_1,...,\alpha_p;\beta_1,...,\beta_q;\zeta)\nonumber\\
  &=& \sum_{s=1}^{2}\left[\sum_{n_s=0}^\infty\frac{\prod_{j=1}^p(\alpha_{sj})_{n_s}}{\prod_{j=1}^q(\beta_{sj})_{n_s}}.\frac{\zeta_s^{n_s}}{n_s!}\right]e_s\nonumber
\end{eqnarray}
 for $\zeta=\sum_{s=1}^2\zeta_s e_s=\mid Z \mid_h^2$. Since $\mid Z \mid_h=\sum_{s=1}^2 \mid Z_s\mid e_s$, we can obtain $\zeta_s=\mid Z_s\mid^2, s=1,2$.

The parameter function $\rho^{(p,q)}(n)$ in (\ref{cs1}) is defined by
\begin{eqnarray}\label{cs2}
\rho^{(p,q)}(n) &\equiv & \rho^{(p,q)}(\alpha_1,...,\alpha_p;\beta_1,...,\beta_q;n)\nonumber\\
&=& \sum_{s=1}^2 \Gamma(n_s+1)\frac{\prod_{j=1}^q (\beta_{sj})_{n_s}}{\prod_{j=1}^p (\alpha_{sj})_{n_s}}e_s.
\end{eqnarray}
For the validity of $\rho^{(p,q)}(n)$ in the expression (\ref{cs2}), we have no choice other than to assume that $\alpha_{sj}, s=1,2$ are neither zero nor negative integer.

Now introducing a bicomplex valued function
\begin{equation*}
f^{(p,q)}(m)=\sum_{s=1}^2\sqrt{(m_s+1)\frac{\prod_{j=1}^q {\beta_{sj}}}{\prod_{i=1}^p {\alpha_{sj}}}}e_s,\quad m=\sum_{s=1}^2 m_s e_s
\end{equation*}
so that
\begin{equation}\label{cs2a}
\prod_{m=0}^{n-1}f^{(p,q)}(m)=\sum_{s=1}^2\sqrt{\Gamma(n_s+1)\frac{\prod_{j=1}^q (\beta_{sj})_{n_s}}{\prod_{i=1}^p (\alpha_{sj})_{n_s}}}e_s=\sqrt{\rho^{(p,q)}(n)}.\nonumber
\end{equation}
A recurrence relation on $\rho^{(p,q)}(n)$ can be obtain as follows :
\begin{equation}\label{cs2b}
\rho^{(p,q)}(n+1)=\rho^{(p,q)}(n)\left(f^{(p,q)}(n)\right)^2,
\end{equation}
with seed value $\rho^{(p,q)}(0)=\sum_{s=1}^2 e_s=1$. For the above, $\rho^{(p,q)}(n)$ (correspondingly $f^{(p,q)}(n)$ also) requires to be a strictly positive hyperbolic number and so we must have another pair of restrictions on $\alpha_{si},\beta_{sj},i=1,2,...,p; j=1,2,...,q; s=1,2$ namely
\begin{equation*}
\frac{\prod_{j=1}^q \beta_{sj}}{\prod_{i=1}^p \alpha_{si}}>0,\quad s=1,2.
\end{equation*}

We can determine the scalar product of two BGHS with identical parameters with the help of normalized functions as
\begin{equation}\label{cs4}
<p;q;Z \mid p;q;Z'>=\frac{\mathcal{N}^{(p,q)}(Z*Z')}{\sqrt{\mathcal{N}^{(p,q)}(\mid Z\mid_h^2)}\sqrt{\mathcal{N}^{(p,q)}(\mid Z'\mid_h^2)}}
\end{equation}
which implies the BGHS are normalized but not orthogonal. Moreover, the expression (\ref{cs4}) will be well defined if the bicomplex generalized hypergeometric functions involved in the scalar product are (unconditionally) convergent. Following the Theorem 2.2, the normalized states defined in (\ref{cs1}) are thus holomorphic only if either $p\leq q$ or $p=q+1$ and $\mathfrak{R}\left(\sum_{j=1}^q b_{1j}-\sum_{i=1}^p a_{1i}\right)>\mid \mathfrak{I}\left(\sum_{j=1}^q b_{2j}-\sum_{i=1}^p a_{2i}\right)\mid$.

In the following we introduce ladder operators for bicomplex shifted oscillators of creation and annihilation types, the later having the BGHS as eigenstates. Thereby, those BGHS serve the role of coherent states of the corresponding shifted oscillators.

We define bicomplex generalized hypergeometric annihilation and creation operators, those are densely defined in the infinite dimensional Hilbert space of the Fock states $\mid n>, n=0,1,2,...$, as
\begin{eqnarray}
\mathcal{A}_-^{(p,q)}\equiv\mathcal{A}_-&=&\sum_{n=0}^\infty f^{(p,q)}(n)\mid n><n+1\mid\label{csn1}\nonumber\\
\mathcal{A}_+^{(p,q)}\equiv\mathcal{A}_+&=&\sum_{n=0}^\infty f^{(p,q)}(n)\mid n+1><n\mid\label{csn2}\nonumber
\end{eqnarray}
where $\mathcal{A}_+ = \mathcal{A}_-^\dagger$ that is $\mathcal{A}_+$ is adjoint operator\footnote{Bicomplex adjoint operator $A^\dagger$ of $A$ is the linear operator from a $\mathbb{BC}$ module  to  itself defined by
$\langle\psi,A\phi\rangle=\langle A^\dagger\psi,\phi\rangle
$ for any bicomplex valued functions $\phi,\psi\in\mathbb{BC}$-module. We refer \cite{ban} for detail in this purpose.} of $\mathcal{A}_-$.

Following the orthogonality and completeness relations of the Fock states, which are
\begin{equation}\label{csn3}
<n\mid m>=\sum_{s=1}^{2}<n_s\mid m_s>e_s=\sum_{s=1}^{2}\delta_{n_s m_s}e_s\nonumber
\end{equation}
and
\begin{equation}\label{csn4}
\sum_{n=0}^{\infty}\mid n><n\mid=\sum_{s=1}^{2}\sum_{n_s=0}^{\infty}\mid n_s><n_s\mid e_s=\sum_{s=1}^{2}e_s=1\nonumber
\end{equation}
we can obtain the following relations:
\begin{equation}\label{csn5}
\mathcal{A}_-\mid n>=\sum_{n=0}^\infty f^{(p,q)}(n)\mid n><n+1\mid n>=\sum_{n=0}^\infty f^{(p,q)}(n)\mid n>\left[\sum_{s=1}^{2}\delta_{n_s n_s-1}e_s\right]=f^{(p,q)}(n-1)\mid n-1>\nonumber
\end{equation}
and
\begin{equation}\label{csn6}
\mathcal{A}_+\mid n>=\sum_{n=0}^\infty f^{(p,q)}(n)\mid n+1><n\mid n>=f^{(p,q)}(n)\mid n+1>.\nonumber
\end{equation}
Thus the operator $\mathcal{A}_-$ acts as a lowering operator whereas $\mathcal{A}_+$ acts as a raising operator and thus justify their name as annihilation and creation operators. Their product operators in the normal ordered manner are diagonal operators in the basis of Fock states, read
\begin{eqnarray}\label{csn7}
\mathcal{A}_-\mathcal{A}_+\mid n>&=& f^{(p,q)}(n)\mathcal{A}_-\mid n+1>=\left[f^{(p,q)}(n)\right]^2\mid n>\nonumber\\
\Rightarrow <n\mid \mathcal{A}_-\mathcal{A}_+\mid n>&=&\left[f^{(p,q)}(n)\right]^2\nonumber
\end{eqnarray}
and
\begin{eqnarray}\label{csn8}
\mathcal{A}_+\mathcal{A}_-\mid n>&=& f^{(p,q)}(n-1)\mathcal{A}_+\mid n-1>=\left[f^{(p,q)}(n-1)\right]^2\mid n>\nonumber\\
\Rightarrow <n\mid \mathcal{A}_+\mathcal{A}_-\mid n>&=&\left[f^{(p,q)}(n-1)\right]^2.\nonumber
\end{eqnarray}
Thus the ladder operators satisfy a noncommutative commutation relation
\begin{equation}\label{csn9}
\left[\mathcal{A}_-,\mathcal{A}_+\right]=\sum_{n=0}^{\infty}\left(\left[f^{(p,q)}(n)\right]^2-\left[f^{(p,q)}(n-1)\right]^2\right)\mid n><n\mid\nonumber
\end{equation}
in similar to the complex valued coherent states for shifted oscillators \cite{daskal,AS,PP}. On using the recurrence relation (\ref{cs2b}), we can obtain
\begin{eqnarray}\label{BGCS1}
\mathcal{A}_-\mid p;q;Z>&=&\left(\sum_{n=0}^\infty f^{(p,q)}(n)\mid n><n+1\mid\right)\frac{1}{\sqrt{\mathcal{N}^{(p,q)}(\mid Z\mid_h^2)}}\left(\sum_{n=0}^\infty \frac{Z^n}{\sqrt{\rho^{(p,q)}(n)}}\mid n>\right)\nonumber\\
&=&\sum_{n=0}^\infty \frac{f^{(p,q)}(n)}{\sqrt{\mathcal{N}^{(p,q)}(\mid Z\mid_h^2)}}\frac{Z^{n+1}}{\sqrt{\rho^{(p,q)}(n+1)}}\mid n>\nonumber\\
&=&Z\frac{1}{\sqrt{\mathcal{N}^{(p,q)}(\mid Z\mid_h^2)}}\left(\sum_{n=0}^\infty \frac{Z^n}{\sqrt{\rho^{(p,q)}(n)}}\mid n>\right)\nonumber\\
&=&Z\mid p;q;Z>\nonumber
\end{eqnarray}
 which shows that BGHS $\mid p;q;Z>$ are eigenstates of $\mathcal{A}_-$, the annihilation operator with bicomplex eigenvalue $Z$. As a result BGHS satisfy the criterions of being coherent states (or $f$-coherent states) defined in (\ref{cs1}) for bicomplex shifted oscillator problem. It would be highly interesting to investigate physical content like photon statics that includes photon number distribution and mean photon number or phase properties of related applicable phase distributions of these $f$-coherent states in future.
\section{Conclusion}
In this article, the notion of generalized hypergeometric function has been introduced in bicomplex module. The new bicomplex generalized hypergeometric function includes some known functions such as bicomplex hypergeometric function, hypergeometric function and confluent hypergeometric function as particular cases. Some integral representations for this function are deduced and various differential and contiguous relations are also established. Corollary \ref{co:qs} indicates that, by setting suitable particular values of the parameters $p$ and $q$, the integral representation of the new bicomplex generalized hypergeometric function coincides with the integral representation of bicomplex hypergeometric function already obtained in \cite{bc_hypergeometric}. Similarly, for some special cases the results can be reduced to the results involving other special functions. An application of bicomplex generalized hypergeometric function in quantum mechanics is also provided as a consequence.

\end{document}